\newtheorem{thm}{Theorem}[section]
\newtheorem{deff}[thm]{Definition}
\newtheorem{lem}[thm]{Lemma}
\newtheorem{rem}[thm]{Remark}
\newtheorem{prop}[thm]{Proposition}
\newtheorem{cor}[thm]{Corollary}
\newtheorem {ex}[thm]{Example}
\newcommand{\vG}{\varGamma}
\newcommand{\ve}{\varepsilon}
\newcommand{\vO}{\varOmega}
\newcommand{\vS}{\varSigma}
\newcommand{\ov}{\overline}
\def\N{{\mathbb N}}
\def\R{{\mathbb R}}
\def\mcL{{\mathcal L}}
\def\mcS{{\mathcal S}}
\def\mcR{\mathcal R}
\def\emp{\emptyset}
\def\lg{\langle}
\def\rg{\rangle}
\def\mcN{\mathcal N}
\def\be{\begin{equation}}
 \newcommand{\dint}{\displaystyle{\int}}
\def\ee{\end{equation}}
\def\ba*{\begin{eqnarray*}}
	\def\ea*{\end{eqnarray*}}
\def\ba{\begin{eqnarray}}
\def\ea{\end{eqnarray}}
\def\smi{\setminus}
\begin{document}
\title[Representations of multimeasures via  $BDS_m$-integral]{ Representations of multimeasures via the  multivalued Bartle-Dunford-Schwartz integral}
 \subjclass[2020]{Primary 28B20; Secondary 26E25, 26A39, 28B05, 46G10, 54C60, 54C65.}
 \keywords{Locally convex space, multifunction,
Bartle-Dunford-Schwartz integral, support function, selection, Radon-Nikod\'{y}m theorem.}
\author[L. Di Piazza, K. Musia{\l}, A. R. Sambucini]{Luisa Di Piazza, Kazimierz Musia{\l}, Anna Rita Sambucini}
\newcommand{\Addresses}{{
  \bigskip
  \footnotesize
\textit{Luisa Di Piazza}:
 Department of Mathematics, University of Palermo, Via Archirafi 34, 90123 Palermo, (Italy).
 Email: luisa.dipiazza@unipa.it, Orcid ID: 0000-0002-9283-5157\\
\textit{Kazimierz Musia{\l}}:
  Institut of Mathematics, Wroc{\l}aw University, Pl. Grunwaldzki  2/4, 50-384 Wroc{\l}aw, (Poland).
  Email: kazimierz.musial@math.uni.wroc.pl, Orcid ID: 0000-0002-6443-2043 \\
\textit{Anna Rita Sambucini\thanks{ (corresponding author)}}:
 Department of Mathematics and Computer Sciences, 06123 Perugia, (Italy).
 Email: anna.sambucini@unipg.it, Orcid ID: 0000-0003-0161-8729; ResearcherID: B-6116-2015.
}}

\begin{abstract}
An integral for a scalar function with respect to a multimeasure $N$ taking its values in a locally convex space is  introduced.
The  definition is independent of  the selections of $N$  and is
related to a functional version of the Bartle-Dunford-Schwartz integral with respect to a vector measure presented by Lewis.
Its properties  are studied together with its application to Radon-Nikod\'{y}m
 theorems in order to represent  as an integrable derivative the ratio of  two general multimeasures or two $d_H$-multimeasures;
 equivalent conditions are provided in both cases.
\end{abstract}
\maketitle

\section{Introduction}\label{s-intro}
The theory of the Bartle-Dunford-Schwartz-integral (BDS-integral) of a scalar function with respect to a vector  valued measure was introduced
in 1955 by R. C. Bartle, N. Dunford and J. T. Schwartz \cite{bds}, and subsequently extensively
studied by several authors (\cite{Lew,mu,MR2419122,MR3098470,MR3473006,kk,k2,wzw}).

 In   `70 Lewis \cite{Lew}
 proposed an equivalent functional version of the integral  (see also \cite{k2,kk,MR2419122}). The literature concerning the Bartle-Dunford-Schwartz
 integration is rather wide  so we quote here only those papers which are  close to the topic of our work and two books (\cite{kk,MR2419122}).\\
 In \cite{mu} the second author proved the existence of the Radon-Nikod\'{y}m
derivative of a vector valued measure $\nu$ with respect to a vector valued measure $\kappa$
by means of the BDS-integral,  under suitable hypotheses on the measures $\nu$ and $\kappa$.
 From this article then came out  \cite{cala},
 where the derivative belongs to a suitable space
and that of \cite{anca}, where a
Radon-Nikod\'{y}m theorem was given  bases on a construction of Maynard type.

In \cite{ka} Kandilakis defined an integral of a scalar function with respect to a multimeasure $N$, whose values were weakly compact and convex
subsets of a Banach space.   This integral is constructed using the selections of $N$.
Contrary to the classical BDS-integral, the Kandilakis' integral  is only sublinear with respect to the integrable functions.

 It is our aim to  define an  integral with respect to a multimeasure $N$,
independent of  the selections of $N$. To achieve it   we take into account the support functions of $N$.
  Our approach is a consequence of Kandilakis' calculations. The considered multimeasures $N$ are very general, in fact they can take
   nonempty closed, convex  values in an arbitrary  locally convex space $X$.
Sometimes we will assume quasi-completeness of $X$ (closed bounded sets are complete).

The theory of  multifunctions and multimeasures is an interesting field of research since it has applications in various applied sciences. In particular, recently, interval-valued multifunctions and multimeasures have been applied also to signal and image processing, see for example \cite{ccgis} and the references therein.

 In Section \ref{s-prel} we study  properties of the integral. Moreover, we compare the Aumann-BDS integral and the
new one: if the multimeasure $N$ possesses selections and the scalar integrable function is bounded, then the Aumann-BDS integrability
implies the $N$-integrability.  To obtain the main results we require the existence of control measures for the multimeasures under investigation and in
Section \ref{control} we give conditions guaranteeing  the existence of such controls.
In particular in Theorem \ref{T1} a characterization of its existence through the countable chain condition (ccc) is provided, while in  Theorem \ref{t5} we describe the class of Banach spaces with all $cb(X)$-multimeasures being $d_H$-multimeasures (and so admitting control measures).
In Theorem \ref{esempi}   a class of spaces  is given in which 
every $c(X)$-valued multimeasures has (ccc).  We quote also very recent characterization of Rodriguez \cite{Ro} of Banach spaces with all multimeasures admitting  control measures. 
  As an application of the new integral,  we study the problem of existence of the Radon-Nikod\'{y}m derivative of a given  multimeasure $M$  with
respect to a multimeasure $N$.
In the classical measure theory, the Radon-Nikod\'{y}m theorem states in concise conditions, namely absolute continuity, domination and
subordination, how a measure can be factorized by another  measure through a density function.
In Section \ref{s-arbitrary} we first consider   arbitrary multimeasures.
In such a case  we are able to characterize  the existence of  the
Radon-Nikod\'{y}m derivative  of $M$ with respect to $N$ (see Theorem
\ref{t1})  by means of the notions of  uniform scalar
absolutely continuity,  uniform scalar domination and uniform scalar subordination.
  In Section \ref{s-dH}   multimeasures which are countably additive in the Hausdorff metric are studied.
   In such a case the multimeasures take values in  nonempty, bounded, closed, convex  subsets of a locally convex space.
The differentiation of $M$ with respect to $N$ is in general not
equivalent to differentiation
  of its R{\aa}dstr\"{o}m embedding $j\circ{M}$ with respect to $j\circ{N}$.
  The reason is that one has to take into account also integration with
respect to the   measure $j\circ(-N)$.
In the case of  multimeasures, we have in general
$j\circ(-N)\neq-j\circ{N}$;  moreover, since  the proofs given are based on support functions which obviously lost the additivity, the methods of our proofs  are not a mere repetition of the vector case.
   The main result is Theorem \ref{t4} where we find conditions (the
strong  uniform scalar absolutely continuity,  the strong uniform
scalar domination and the strong uniform scalar subordination)  guaranteing the existence of  the Radon-Nikod\'{y}m
derivative both of $M$ with respect to $N$  and of their
  R{\aa}dstr\"{o}m embeddings.
  At last in Section \ref{s-ex} we provide some examples of multimeasures that can be represented by another multimeasure through a
   multivalued Bartle-Dunford-Schwartz density.

\section{Preliminaries}\label{s-prel}
Throughout $(\vO,\vS)$ is a  measurable space,
the real numbers are denoted by $\mathbb R$ and $\mathbb R_0^+$ denotes the non-negative reals. If $\nu:\vS\to(-\infty,+\infty]$ is a measure,
 then $|\nu|$ denotes its variation.  $\vS_E $ is the  family of all
 $\vS$-measurable subsets of $E$.  Let $X$ be a locally convex linear topological space (shortly, locally convex space) and let $X'$ be its conjugate space.
Given a subset $S$ of $X$, we write {\rm co}$(S)$, {\rm aco}$(S)$ and {\rm span}$(S)$
to denote, respectively, the convex, absolutely convex and linear hull of $S$.\\
The symbol $c(X)$ denotes the collection of all
nonempty closed convex subsets of $X$ and $cb(X),\,cwk(X)$,  $ck(X)$   denote respectively
the family of all bounded and  the family of all (weakly) compact  members of $c(X)$.  For every $C \in c(X)$ the
\textit{  support function of}   $\, C$ is denoted by $s( \cdot, C)$ and
defined on $X'$ by $s(x', C) = \sup \{ \langle x',x \rangle \colon  \ x \in C\}$, for each $x' \in X'$.
The symbol $\oplus$ denotes the closure of the Minkowski addition.\\
We say that $M:\vS\to{c(X)}$ is a \textit{ multimeasure} if for every   $x'\in{X'}$ the set function
$s(x',M(\cdot)):\vS\to (-\infty,+\infty]$ is a  $\sigma$-finite measure.
 The $\sigma$-finiteness of each $s(x',M)$ seems to be the weakest possible assumption,  otherwise we meet problems connected with the RN-theorem.
  Simply if some $\nu:=s(x',M)$ is not $\sigma$-finite but is absolutely continuous with respect to a finite measure $\mu$, then there is a set $\vO_0$
  such that $\nu$ is $\sigma$-finite on $\vO_0$ and takes only values $0,+\infty$ on $\vO^c$. If $\nu(E)=\int_E f\,d\mu$, then $f$ must take infinite
 values and we are not interested in such a situation. \\
   Given a multimeasure $M : \vS \to c(X)$ we denote by
${\mathcal N}(M) := \{E\in\vS\colon M(E)=\{0\}\}$ the family of null sets. A multimeasure
  $M:\vS\to{c(X)}$ is said to be $\sigma$-{\it bounded} if there is a sequence $(\vO_n)_n$ of elements of $\vS$ such that $\vO\smi\bigcup_n\vO_n\in\mcN(M)$ and
   $M$ is $cb(X)$-valued on each algebra $\vS_{\vO_n}$.

A multimeasure $M$ is called \textit{ pointless} if its restriction to no set $E\in\vS\smi\mcN(M)$ is a vector measure. Each multimeasure
 determined  by a function  (see \cite{cdpms2019} for the  definition) that is not  scalarly equivalent to zero function  is pointless.
 Less trivial examples can be deduced from \cite[Example 1.11]{mu4} if one assumes that the function $r$ appearing there is strictly positive.
  Two multimeasures $M,N:\vS\to{c(X)}$ are {\it consistent}
   if there exists $H\in\vS$ such that $M$ and $N$ are pointless on $H$ and  vector measures on $H^c$.

 If $M$  is a $cb(X)$-valued multimeasure, then for each  $x'\in{X'}$ the measure $s(x',M)$ is finite.
 If $A\in\vS$, then $M|_A$ is the multimeasure defined on $\vS_A$ by $(M|_A)(E):=M(A\cap{E})$.
A multimeasure $M:\vS\to{c(X)}$ is called \textit{  positive}, if $0\in{M(E)}$ for each $E\in\vS$.\\

If $X$ is a Banach space and a multimeasure $M:\vS\to{cb(X)}$ is countably additive in the Hausdorff metric $d_H$, then it is called a $d_H$-\textit{multimeasure}.

  In the following if  $Z$ is any metric space,  we use the  symbol $B_Z$  to denote its closed unit ball.  \\
A helpful tool to study the $d_H$-multimeasures is the R{\aa}dstr\"{o}m embedding
 $j:cb(X)\to \ell_{\infty}(B_{X'})$, defined
 by $j(A):=s(\cdot, A)$, (see, for example  \cite[Theorem 3.2.9 and Theorem 3.2.4(1)]{Beer} or \cite[Theorem II-19]{CV})
  It is known that $B_{X'}$ can be embedded into $\ell_{\infty}'(B_{X'})$  by the mapping $x'\longrightarrow e_{x'}$, where
$\langle{e_{x'},h}\rangle=h(x')$, for each $h\in\ell_{\infty}(B_{X'})$. Moreover, the range of $B_{X'}$ is a norming subset of
 $\ell_{\infty}'(B_{X'})$.
The embedding
$j$  satisfies the following properties:
\begin{description}
\item[\ref{s-prel}.a)] $j(\alpha A \,  \oplus \,  \beta C) = \alpha j(A) + \beta j(C)$ for every $A,C\in  cb(X),\,\, \alpha, \beta \in  \mathbb{R}{}^+$;
\item[\ref{s-prel}.b)] $d_H(A,C)=\|j(A)-j(C)\|_{\infty},\quad A,C\in  cb(X)$;
\item[\ref{s-prel}.c)] $j(cb(X))$  is a closed cone in the space
$\ell_{\infty}(B_{X'})$ equipped with the norm of the uniform convergence.
\end{description}
Observe that instead of $\ell_{\infty} (B_{X'})$, we may use  $C_B (B_{X'}, \tau_{(X',X)})$ (where $\tau_{(X',X)}$
  is the Mackey topology) for weakly compact sets, $C(B_{X'},\sigma(X',X))$ in case of compact sets  and $C_B(B_{X'}, \|\cdot\|_{\infty})$ in case of closed bounded sets.
  But as we do not apply any special properties of  space in which we embed $cb(X)$, we will stay with
\begin{eqnarray}\label{eq:Y}
  \ell_{\infty} (B_{X'}).
  \end{eqnarray}
We denote by  $R(M_E)$ the range of the multimeasure $M$ in $c(X)$, restricted to measurable subsets of $E$:
$R(M_E):= \{ M(F): F \in \vS_E \} \subset c(X)$. Moreover, $\mathcal{R}(M_E):=
\{ z \in X : \exists\;F \in \vS_E, \,\, z \in M(F)\} \subset X$. \\
We say that  a multimeasure $M:\vS\to{c(X)}$ is  \textit{ absolutely continuous} with respect to a multimeasure $N:\vS\to{c(X)}$ (we write then $M\ll{N}$), if
${\mathcal N}(N) \subset  {\mathcal N}(M).$
 If $M\ll{N}$ and $N\ll{M}$, then the multimeasures are called equivalent. 

  We say that a non negative  measure $\mu$ is a \textit{ control measure} for a  multimeasure $M:\vS\to{c(X)}$    if $\mu$ is a finite measure and for each $E\in \vS$ the
  condition $\mu(E)=0$ yields $M(E)=\{ 0 \}$. If $\mu$ is a control measure for $M:\vS\to{c(X)}$, then $M$ is $\sigma$-bounded if and only if it is locally bounded, i.e.
   for each set $E\notin\mcN(\mu)$ there exists a subset $F$ of $E$ of positive $\mu$-measure such that $M$ is $cb(X)$-valued on $\vS_F$. It is a direct consequence
   of \cite[Theorem 1.4]{bds} that each $d_H$-multimeasure has a control measure.

If $M:\vS\to{c(X)}$ is a multimeasure, then $\mcS_M$ denotes the family of all countably additive $X$-valued selections of $M$.\\
\begin{deff}\label{def21}
\rm \mbox{ }
\begin{itemize}
\item
Let $n:\vS\to{X}$ be a vector measure.    A  measurable function  $f:\vO\to\R$
 is called \textit{ Bartle-Dunford-Schwartz (BDS) integrable with respect to} $n$, if for every
 $x' \in X'$ $f$ is $\lg{x',n}\rg$-integrable  and for each  $E\in\vS$ there exists a point
  $\nu(E)\in{X}$ such that $\lg{x',\nu(E)}\rg=\dint_Ef\,d\lg{x',n}\rg$, for every $x'\in{X'}$.
  Such an approach to the Bartle-Dunford-Schwartz integral  was suggested by Lewis \cite{Lew}.
\item
Let $N:\vS\to {c(X)}$ be a multimeasure.  A  measurable function  $f:\vO\to\R$
	is called \textit{  Aumann-Bartle-Dunford-Schwartz (Aumann-BDS)-integrable with respect to} $N$ if $\mcS_N\neq\emp$ and
$f$ is integrable in the sense of Bartle-Dunford-Schwartz with respect to all members of $\mcS_N$.  The integral on a set $E\in\vS$ is  then defined  by the formula
\[
(s)\int_Ef\,dN:=\ov{\biggl\{ {\scriptstyle(BDS)}\int_Ef\,dn\colon n\in\mcS_N\biggr\}}.
\]
The above definition was suggested by Kandilakis \cite{ka} for $cwk(X)$-valued multimeasures and a  Banach space. In that case the set in the braces was closed and
the additional closure is superfluous.
The symbol $(s)$  used here indicates that the integral is constructed using the selections of $N$.
\end{itemize}
\end{deff}
\begin{rem}\label{rem1}
\rm \mbox{ }
\begin{itemize}
	\item A multimeasure $N: \vS \to c(X)$ is called
	\textit{  rich } if $N(A) = \overline{\{n(A), n \in S_N \}}$.
	By a result of Cost\'{e}, quoted in \cite[Theorem 7.9]{hess}, this is verified for  multimeasures which take as their values  weakly compact convex subsets of a Banach space
	 $X$, or $cb(X)$-valued when $X$ is a Banach space possessing the RNP (\cite[Th\'{e}or\`{e}me 1]{co}).
	\item
	 If  $N$ is a $cwk(X)$ valued multimeasure of bounded variation and $X$ is a Banach space, then $\mcS_N\neq\emp$ (see for example \cite{GT,Hi}).  Moreover, by
	 \cite[Theorem 3.2]{ka},  $(s)\dint_Ef\,dN \in cwk(X)$.
	\item If $f,g$ are Aumann-BDS-integrable, then
	$$(s)\int_E(f+g)\,dN\subseteq (s) \int_Ef\,dN \oplus (s) \int_Eg\,dN\,.$$
	The example of $g=-f$ shows that the equality fails in general. In particular, the integral in not additive.
	\item
	 We know from \cite[Theorem 3.3]{ka} that in case of a multimeasure $N$ with
	 $cwk(X)$ values in a Banach space $X$ and bounded $f$, we have for every $x'\in{X'}$ and every $E\in\vS$
\begin{linenomath*}
\label{f2}
\begin{eqnarray}
	s\biggl(x', (s) \int_Ef\,dN\biggr)&=&
	\int_Ef^+\,ds(x',N)+\int_Ef^-\,ds(-x',N)\label{e2}\\
	&=&\int_Ef^+\,ds(x',N)+\int_Ef^-\,ds(x',-N)\notag\\
	&\stackrel{in\; general}{\neq}&\int_Ef^+\,ds(x',N)-\int_Ef^-\,ds(x',N) \notag
	\\ &=&
	\int_Ef\,ds(x',N)\,. \notag
\end{eqnarray}
\end{linenomath*}	
	But it follows from that proof that the  equality (2) holds true in each case when ${\mathcal S}_N\neq\emp$, that is even when  $X$ is a
 	locally convex space and   $N$ is not weakly compactly valued. \\
	In particular, if $f\geq 0$, then
\begin{equation}\label{e6}
	s\biggl(x', (s) \int_E f\,dN\biggr)=\int_Ef\,ds(x',N)
\end{equation}
	and the integral is additive for non-negative functions. Moreover, it follows from (\ref{e2}) that the integral is a multimeasure, for every { Aumann-BDS}-integrable $f$.
\end{itemize}
\end{rem}
As noticed in \cite[Remark 3.2]{wzw}, if $f$ is negative, then the equality (\ref{e6}) may fail.
In fact, according to (\ref{e2}),  we have then
\begin{eqnarray*}
s\biggl(x',(s)\!\int_E(-1)\,dN\biggr)&=&\int_E1\,ds(-x',N)=s(-x',N(E))\notag\\
&\stackrel{in\; general}{\neq}&-s(x',N(E))=\int_E-1\,ds(x',N)\,.
\end{eqnarray*}
 Now we  would like to define an integral with respect to a multimeasure that would be independent of selections of the multimeasure but would be
 consistent with earlier definitions via selections. We know already (\cite{wzw,ka}) that if $X$ is a Banach space and a non-negative $\theta:\vO\to\R$ is integrable with
 respect to $N:\vS\to{cwk(X)}$, then for every $E\in\vS$ there exists $W_E\in{cwk(X)}$ such that for every $x'\in{X'}$  holds true the equality
$s(x',W_E)=\dint_E\theta\,ds(x',N).$\\

 We take this property as the definition of the integral of a non-negative function with respect to an arbitrary multimeasure $N:\vS\to{c(X)}$.
  Our approach is close to  Lewis' \cite{Lew} functional equivalent definition of an integral with respect to a vector measure.
 It is worth to remember that the Lewis definition in \cite{Lew} was also  considered by Kluv\'anek in \cite{k2} (see also \cite{MR2419122}).
We call the integral that we are going to define a \textit{ multivalued-Bartle-Dunford-Schwartz integral} since Bartle,
Dunford and Schwartz were the first who considered such a kind of integration in the vector case.

\begin{deff}\label{defiN}
\rm Let $N:\vS\to{c(X)}$ be a multimeasure. If  $f:\vO\to\R$ is  a non-negative  measurable function,
we say that $f$  is {\it multivalued-Bartle-Dunford-Schwartz   integrable  with respect to $N$}  (shortly $BDS_m$-\textit{ integrable with respect to $N$})
in $c(X)\,\;(cb(X),cwk(X),ck(X))$,
if  for every $E\in\vS$ there exists $C_E\in{c(X)}, (cb(X)$, $cwk(X), ck(X))$ such that for every $x'\in{X'}$
\begin{eqnarray}\label{def-pos}
s(x',C_E)=\int_E f \,ds(x',N).
\end{eqnarray}
We set $C_E:= \dint_E f dN$.\\
We  say that  a measurable $f:\vO\to\R$ is
{\it multivalued--Bartle-Dunford-Schwartz   integrable  with respect to $N$} in $c(X) \;(cb(X), cwk(X),ck(X))$, if $f^+$ and $f^-$ are
 {\it $BDS_m$-integrable  with respect to $N$} in $c(X) \;(cb(X), cwk(X),ck(X))$.
Then, for every $E\in\vS$, we define  the integral of $f$ with respect to $N$ as
 $$ \int_E f\,dN:=\int_E f^+\,dN \oplus \int_E f^-\,d(-N)\,. $$
The above definition is consistent with the property described in   \cite[Theorem 3.3]{ka} (There is a missprint in \cite[Theorem 3.3]{ka}, the sign minus should be replaced by plus).
  Equivalently, $f:\vO\to\R$ is $BDS_m$-integrable  with respect to $N$ in $c(X), (cb(X)$, $cwk(X), ck(X))$,
 if for each $E\in\vS$ there exists $M_f(E)\in c(X)\;(cb(X)$, $cwk(X), ck(X))$ such that for every
 $x'\in{X'}$
\begin{equation}\label{e10}
s(x',M_f(E)) =\int_Ef^+\,ds(x',N)+\int_Ef^-\,ds(x',-N)
\end{equation}
and the right hand side of (\ref{e10}) makes sense. We write then $\dint_E f\,\,dN :=M_f(E)$.
 \end{deff}
  The above definition is consistent with the property described in   \cite[Theorem 3.3]{ka}.
Since $s(x',\pm N):\vS\to(-\infty,+\infty]$ are  $\sigma$-finite measures for every $x'$ then, by (\ref{e10}),
 the set functions $s(x',M_f)$ are  $\sigma$-finite measures
 with values in $(-\infty,+\infty]$. Consequently, $M_f$ is a multimeasure.
 Let us notice that if $N$ is a vector measure (say $N=\nu$) then the right hand side of (\ref{e10}) looks as follows:
$$ s(x',M_f(E)) =\int_Ef\,d\lg{x',\nu}\rg\,.$$
One can easily check that the integral on the right hand side has to be finite  and $M$ is a vector measure  (see   Lemma \ref{L1}).
\begin{rem}\label{rem2.4}
\rm  \mbox{ }
\begin{itemize}
	\item  Assume that ${\mathcal S}_N\neq\emp$ (see for example Remark \ref{rem1}).
	Observe  that if  $f$ is a bounded, measurable,  Aumann-BDS-integrable function  whose integral belongs to $c(X)$ then
	 $f$ is ${BDS}_m$-integrable with respect to $N$  and
\[ (s) \int_E f\,\,dN = M_f (E). \]
	\item
	  If $f$ is a ${BDS}_m$-integrable with respect to $N$ function then
	$M_f (A \cup B) = M_f(A) \oplus M_f (B)$ for every $A,B \in \vS$ with $A \cap B = \emptyset$.
	In fact, for every $x' \in X'$, we have:
\begin{linenomath*}
\begin{eqnarray*}
s\biggl(x',\int_{A \cup B} f \,dN\biggr)&=&
\int_{\vO}(f \chi_{A \cup B})^+\,ds(x',N)+
\int_{\vO}(f \chi_{A \cup B})^-\,ds(x',-N)\\
&=&
\int_{\vO} (f^+ \chi_A + f^+ \chi_B)\,ds(x',N)+
\int_{\vO}(f^- \chi_A +f^- \chi_B)\,ds(x',-N)\\
&=& s\biggl(x',\int_A f\,dN\biggr)+s\biggl(x',\int_B f\,dN\biggr) =s(x', M_f(A) \oplus M_f(B)).
\end{eqnarray*}
\end{linenomath*}
	So, for every $x' \in X'$, it is  $s(x', M_f(A \cup B)) = s(x',M_f(A))+s(x',M_f(B)) = s(x', M_f(A) \oplus M_f(B))$.
\end{itemize}
\end{rem}
\begin{rem}\rm
Let $X$ be a Banach space and $N:\vS\to{cb(X)}$ be a $d_H$-multimeasure. If $f$ is a measurable function such that there exists a sequence
 of simple functions $(f_n)_n$ which pointwise converges to $f$  and the sequences $(\dint_E f_n^{\pm} dN)_n$ are Cauchy in $(cb(X),d_H)$,
  then $f$ is  $BDS_m$ integrable with respect to $N$.\\
Notice first that if $f=  \sum_{i=1}^n a_i 1_{E_i}$ is measurable with non-negative $a_i\,,i=1,\ldots,n$, and $N$ is $cb(X)$-valued, then
$\dint_E f dN = \bigoplus_{i=1}^n a_i N(E \cap E_i)$
 for every $E \in \vS$, since for every $x' \in X'$ the support function is additive with respect to the Minkowski addition. \\
In the general case, if a sequence $(f_n)_n$ of simple functions converges pointwise to $f$, then  $(f_n^{\pm})_n$ converges to $f^{\pm}$.
We consider first $f^+$.  $(f_n^+)_n$ converges poitwise to $f^+$ and  for every $x' \in X'$ and for every $E \in \vS$

  $$  \left(\int_E f_n^+ ds(x',N) \right)_n = \left( s (x', \int_E f_n^+ dN) \right)_n.  $$

  Since $\left( \dint_E f_n^+ dN \right)_n$ is Cauchy in $ (cb(X),d_H)$, by the completeness of the hyperspace
 for every $E \in \vS$  there exists  $M^{+}(E) \in cb(X)$ such that
 \[  \lim_{n \to \infty} d_H \left( \int_E f^{+}_n dN, M^{+}(E) \right) = 0. \]
 We apply now \cite[Lemma 2.3]{Lew} to $f_n^+, f^+$ and $s(x',N)$ and we obtain that $f^+$ is integrable with respect to $s(x',N)$ and
 \[  \lim_{n \to \infty}  \int_E f_n^+ d s(x',N) = \int_E f^+  d s(x',N), \qquad \mbox{uniformly with respect to } \, E \in \vS.  \]
 Since
 \[ d_H \left( \int_E f^{+}_n dN, M^{+}(E) \right) = \sup_{x' \in B_{X'}}
 \left| s(x', \int_E f^{+}_n dN) - s(x',M^+(E)) \right| \]
 we have that
 \[ s(x',M^+(E)) = \int_E f^+ ds(x',N).\]
For the negative part $f^-$ we apply the same construction using $s(x',-N)$; in this way we obtain analogously $M^-(E)$. Finally, considering   $M^+(E) \oplus M^-(E)$ we obtain
\[s(x', M^+(E) \oplus M^-(E)) = \int_E f^+ ds(x',N) + \int_E f^- ds(x',-N). \]
Similarly, if $f$ is a  $BDS_m$-integrable function with respect to a $d_H$-multimeasure $N$, then there exists a sequence $(f_n)_n$ of simple functions  that is
pointwise convergent to $f$ and the sequence $\biggl(\dint_Ef_n\,dN\biggr)_n$ is Cauchy in $(cb(X),d_H)$. See Remark \ref{r4} for the proof.
\end{rem}
\begin{prop}\label{sublinear}
If $N$ is a positive multimeasure, then the   ${BDS}_m$-integral  with respect to $N$  is a sublinear function of its integrands.
\end{prop}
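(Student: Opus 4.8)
The Proposition asserts two facts about the map $f\mapsto\int_E f\,dN$ (with $E\in\vS$ fixed): positive homogeneity, $\int_E(\lambda f)\,dN=\lambda\int_E f\,dN$ for $\lambda\ge 0$, and subadditivity, $\int_E(f+g)\,dN\subseteq\int_E f\,dN\oplus\int_E g\,dN$ for $BDS_m$-integrable $f,g$ (with $f+g$ again $BDS_m$-integrable). The plan is to reduce both to computations with support functions via formula (\ref{e10}), and then to invoke the two standard facts recalled in Section \ref{s-prel}: for $C,D\in c(X)$ one has $C\subseteq D$ if and only if $s(x',C)\le s(x',D)$ for every $x'\in X'$ (the nontrivial implication is a Hahn--Banach separation, valid in any locally convex space), and $s(x',C\oplus D)=s(x',C)+s(x',D)$.

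Positive homogeneity is the easy half, and it does not even use positivity of $N$. Since $(\lambda f)^+=\lambda f^+$ and $(\lambda f)^-=\lambda f^-$ for $\lambda\ge 0$, formula (\ref{e10}) gives
\begin{equation*}
s(x',M_{\lambda f}(E))=\lambda\Bigl(\int_E f^+\,ds(x',N)+\int_E f^-\,ds(x',-N)\Bigr)=\lambda\,s(x',M_f(E))=s(x',\lambda M_f(E)),
\end{equation*}
and $\lambda M_f(E)$ belongs to the same class among $c(X),cb(X),cwk(X),ck(X)$ as $M_f(E)$, so $\lambda f$ is $BDS_m$-integrable and $M_{\lambda f}(E)=\lambda M_f(E)$; for $\lambda=0$ this reads $\int_E 0\,dN=\{0\}$.

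For subadditivity, positivity of $N$ is used exactly once, and crucially. From $0\in N(E)$ for every $E\in\vS$ --- hence also $0\in -N(E)$ --- one gets $s(x',N(E))\ge 0$ and $s(x',-N(E))=s(-x',N(E))\ge 0$, so that $s(x',N)$ and $s(x',-N)$ are \emph{nonnegative} $\sigma$-finite measures for every $x'$. Then the elementary pointwise inequalities $(f+g)^+\le f^++g^+$ and $(f+g)^-\le f^-+g^-$ survive integration, and for every $x'\in X'$ I would write
\begin{align*}
s\Bigl(x',\int_E(f+g)\,dN\Bigr)
&=\int_E(f+g)^+\,ds(x',N)+\int_E(f+g)^-\,ds(x',-N)\\
&\le\int_E(f^++g^+)\,ds(x',N)+\int_E(f^-+g^-)\,ds(x',-N)\\
&=s\Bigl(x',\int_E f\,dN\Bigr)+s\Bigl(x',\int_E g\,dN\Bigr)=s\Bigl(x',\int_E f\,dN\oplus\int_E g\,dN\Bigr),
\end{align*}
where the last line uses (\ref{e10}), additivity of the integral of a sum of nonnegative functions against a measure, and additivity of $s(x',\cdot)$ over $\oplus$. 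The separation characterization then yields $\int_E(f+g)\,dN\subseteq\int_E f\,dN\oplus\int_E g\,dN$. (Positivity is in fact a shade more than what is strictly needed, since $s(x',N)+s(x',-N)$ is automatically a nonnegative measure; but assuming $N$ positive keeps the argument cleanest and makes the right-hand side of (\ref{e10}) unambiguously defined, with no $+\infty$ subtleties.)

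The step I expect to require the most care is not the display above but the hypothesis that makes it meaningful: that $\int_E(f+g)\,dN$ exists in the prescribed class, i.e. that some member of $c(X)$ (resp.\ $cb(X),cwk(X),ck(X)$) realizes the right-hand side of (\ref{e10}) for $f+g$, so that the inclusion makes sense as stated. In the cases of real interest --- $N$ a vector measure, or a $cwk(X)$-valued multimeasure of bounded variation so that the Aumann--BDS integral is at our disposal --- this is automatic; in general the statement is to be understood for $f,g$ such that $f+g$ is again $BDS_m$-integrable with respect to $N$. Once that is in place, the two support-function computations above are the whole proof.
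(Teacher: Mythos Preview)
Your proof is correct and matches the paper's approach: both use the pointwise inequalities $(af+bg)^\pm\le af^\pm+bg^\pm$ together with the nonnegativity of $s(\pm x',N)$ (coming from positivity of $N$) to bound $s\bigl(x',\int_E(af+bg)\,dN\bigr)$ by $a\,s\bigl(x',\int_E f\,dN\bigr)+b\,s\bigl(x',\int_E g\,dN\bigr)$. The only cosmetic difference is that the paper treats $af+bg$ with $a,b\ge 0$ in a single display rather than separating homogeneity from subadditivity, and it omits the Hahn--Banach separation step and the existence caveat for $\int_E(f+g)\,dN$ that you spell out.
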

\begin{proof}
 Indeed, assume that $f,g$ are   ${BDS}_m$-integrable  with respect to $N$   and $a,b\geq 0$. Then,
\begin{linenomath*}
\begin{eqnarray*}
s\biggl(x',\int_E(af+bg)\,dN\biggr)&=&\int_E(af+bg)^+\,ds(x',N)+\int_E(af+bg)^-\,ds(-x',N)\\
&\leq&
\int_E(af^+ + bg^+)\,ds(x',N)+\int_E(af^-+bg^-)\,ds(-x',N)\\
&=&a\biggl[\int_Ef^+\,ds(x',N)+\int_Ef^-\,ds(-x',N)\biggr]\\
&+&b\biggl[\int_Eg^+\,ds(x',N)+\int_Eg^-\,ds(-x',N)\biggr]\\
&=& as\biggl(x',\int_Ef\,dN\biggr)+bs\biggl(x',\int_Eg\,dN\biggr).
\end{eqnarray*}
\end{linenomath*}
\end{proof}
The following lemma is essential for our further investigation:
\begin{lem}\label{L1}
 Let $M,N:\vS\to c(X)$  be two multimeasures possessing control measure $\mu$ and such that $M(E)=\dint_E\theta\,dN$, for every $E\in\vS$.
 Then $M$ is pointless if and only if $N$ is pointless. Equivalently, $M$ is a vector measure if and only if $N$ is a vector measure.
\end{lem}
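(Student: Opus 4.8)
The plan is to transfer everything to support functions and to the auxiliary measures $\sigma^{P}_{x'}(\cdot):=s(x',P(\cdot))+s(-x',P(\cdot))$ attached to a multimeasure $P:\vS\to c(X)$ and a functional $x'\in X'$. Two elementary remarks drive the argument: $\sigma^{P}_{x'}$ is a $\sigma$-finite measure with values in $[0,+\infty]$ — being the sum of the $\sigma$-finite measures $s(\pm x',P)$, and non-negative since $P(F)\neq\emp$ — and a closed convex set $C$ is a singleton exactly when $s(x',C)+s(-x',C)=0$ for every $x'$. Hence, for $E\in\vS$, the restriction $P|_{E}$ is a single-valued (i.e.\ vector) measure if and only if $\sigma^{P}_{x'}(E)=0$ for all $x'\in X'$. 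We also use $s(x',-N(E))=s(-x',N(E))$, so $ds(x',-N)=ds(-x',N)$.

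First I would record the key identity: expanding $M(E)=\dint_{E}\theta\,dN$ by means of (\ref{e10}) and regrouping the $\theta^{+}$- and $\theta^{-}$-terms against $ds(x',N)$ and $ds(-x',N)$ gives, for every $x'\in X'$ and $E\in\vS$,
\[
\sigma^{M}_{x'}(E)=\int_{E}(\theta^{+}+\theta^{-})\,d\bigl(s(x',N)+s(-x',N)\bigr)=\int_{E}|\theta|\,d\sigma^{N}_{x'}.
\]
Two consequences are immediate. If $\sigma^{N}_{x'}$ vanishes on a set then so does $\sigma^{M}_{x'}$; thus $N|_{E}$ a vector measure $\Rightarrow$ $M|_{E}$ a vector measure, for every $E$. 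For a partial converse, $|\theta|>0$ on $\{\theta\neq 0\}$, so on that set $\sigma^{M}_{x'}(F)=0$ forces $\sigma^{N}_{x'}(F)=0$; and the right-hand side of the identity vanishes on $\{\theta=0\}$, so $M\equiv\{0\}$ there and $\{\theta=0\}\cap E\in\mcN(M)$. Hence $M|_{E}$ a vector measure $\Rightarrow$ $N|_{E\cap\{\theta\neq 0\}}$ a vector measure.

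Now the equivalence of pointlessness. Assume $N$ is pointless and, for a contradiction, that $M$ is not; pick $E\notin\mcN(M)$ with $M|_{E}$ a vector measure. Since $M(E)=M(E\cap\{\theta=0\})\oplus M(E\cap\{\theta\neq 0\})$ with the first summand $\{0\}$, the set $E_{0}:=E\cap\{\theta\neq 0\}$ satisfies $M(E_{0})=M(E)\neq\{0\}$, so $E_{0}\notin\mcN(M)$, while $N|_{E_{0}}$ is a vector measure. Pointlessness of $N$ forces every $G\subseteq E_{0}$ (for which $N|_{G}$ is still a vector measure) into $\mcN(N)$, i.e.\ $N\equiv\{0\}$ on $\vS_{E_{0}}$; then $s(\pm x',N)$ vanish on $E_{0}$ and $M(G)=\dint_{G}\theta\,dN=\{0\}$ for all $G\subseteq E_{0}$, contradicting $E_{0}\notin\mcN(M)$. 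Conversely, assume $M$ pointless and suppose $N|_{E}$ is a vector measure for some $E\notin\mcN(N)$; then $M|_{E}$ is a vector measure, so pointlessness of $M$ yields $M\equiv\{0\}$ on $\vS_{E}$. With $n:=N|_{E}$ we get $\dint_{G}\theta\,d\lg x',n\rg=s(x',M(G))=0$ for all $G\subseteq E$ and $x'\in X'$, hence $\theta=0$ almost everywhere with respect to each $|\lg x',n\rg|$; since $\mu$ controls $N$ we have $|\lg x',n\rg|\ll\mu$, and since $\theta$ does not vanish off a $\mu$-null set this forces $\lg x',n\rg\equiv 0$ for every $x'$, so $n\equiv 0$ and $E\in\mcN(N)$ — a contradiction. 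The statement "$M$ is a vector measure $\iff$ $N$ is a vector measure" is the case $E=\vO$ of the same equivalences.

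The step I expect to be the real obstacle is this last transfer, from "$\theta=0$ $|\lg x',n\rg|$-a.e.\ for every $x'$" to "$n\equiv 0$" — equivalently, the fact that $N$ cannot be carried by the set $\{\theta=0\}$. This is precisely where the hypothesis that $M$ and $N$ admit a common control measure $\mu$ is used: it collapses the family of scalar null-set conditions into the single requirement that $\{\theta=0\}$ be $\mu$-negligible. Everything else — the reformulation through $\sigma^{P}_{x'}$ and the bookkeeping with the disjoint pieces $E\cap\{\theta=0\}$, $E\cap\{\theta\neq 0\}$ — is routine once the displayed identity is in place.
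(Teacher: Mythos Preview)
Your route through the non-negative auxiliary measures $\sigma^{P}_{x'}=s(x',P)+s(-x',P)$ and the identity $\sigma^{M}_{x'}(E)=\int_{E}|\theta|\,d\sigma^{N}_{x'}$ is genuinely different from the paper's argument, which instead invokes the classical Radon--Nikod\'ym theorem to write $s(x',N(E))=\int_{E}f_{x'}\,d\mu$ and then deduces the $\mu$-a.e.\ linearity $f_{ax'+by'}=af_{x'}+bf_{y'}$ on $\{\theta>0\}$ from the linearity of $x'\mapsto\langle x',\nu(E)\rangle$. Your formulation is more symmetric and conceptually cleaner; the implication ``$N$ pointless $\Rightarrow$ $M$ pointless'' is argued correctly.

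The gap is precisely where you flagged it. The clause ``since $\theta$ does not vanish off a $\mu$-null set'' is not supported by any hypothesis: nothing in the statement rules out $\mu(\{\theta=0\})>0$, and on $\{\theta=0\}$ your identity gives $\sigma^{M}_{x'}=0$ (hence $M\equiv\{0\}$ there) but says nothing about $N$. Concretely, take $X=\R$, $\vO=[0,2]$ with Lebesgue measure $\mu$, set $N(E)=[\mu(E\cap[0,1]),\,\mu(E)]$ and $\theta=\chi_{[1,2]}$; then $M(E)=\int_E\theta\,dN=[0,\mu(E\cap[1,2])]$ is pointless, while $N$ restricted to $[0,1]$ is the vector measure $F\mapsto\{\mu(F)\}$ with $[0,1]\notin\mcN(N)$, so $N$ is not pointless. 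The paper's proof shares the same lacuna: after handling $A=\{\theta>0\}$ it writes ``Similarly for $A^{c}$'', but the argument uses $\theta^{+}|_{A}>0$ (respectively $\theta^{-}>0$ on $\{\theta<0\}$) in an essential way and yields nothing on $\{\theta=0\}$. As stated, the lemma appears to need the additional hypothesis $\theta\neq 0$ $\mu$-a.e.; under that assumption both your argument and the paper's go through.
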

\begin{proof} Assume that $N(E)=\{\kappa(E)\}$ for every $E\in\vS$ and $\kappa:\vS\to{X}$ is a vector measure. Then, we have for each $E\in\vS$ and each $x'\in{X'}$
\begin{linenomath*}\begin{eqnarray}\label{e18}
s(x',M(E))&=& \int_E\theta^+\,ds(x',N)+\int_E\theta^-\,ds(x',-N)=
\int_E\theta^+\,d\lg{x',\kappa}\rg+\int_E\theta^-\,d\lg{x',-\kappa}\rg\notag
\\&=&
\int_E\theta^+\,d\lg{x',\kappa}\rg-\int_E\theta^-\,d\lg{x',\kappa}\rg=\int_E\theta\,d\lg{x',\kappa}\rg\,.
\end{eqnarray}
\end{linenomath*}
 By our assumption the integral $\dint_E\theta\,d\lg{x',\kappa}\rg$ exists
 and  has to be finite. If not, then the equality  $\dint_E\theta\,d\lg{x',\kappa}\rg=+\infty$ yields $s(-x',M(E))=-\infty$, what is impossible.
 Thus, $M$ has only bounded sets as its values.
In such a case  the expression on the right hand side of (\ref{e18}) is a linear function on $X'$. Hence the same holds true for $x'\longrightarrow s(x',M(E))$.
But that means that $M$ is a vector measure. \\
A similar situation takes place if  $M(E):=\{\nu(E)\}\,,\;E\in\vS$, where $\nu$ is a vector measure.
By the assumption, we have

$$
\forall\;E\in\vS,\quad \forall\;x'\in{X'},\qquad
\lg{x',\nu(E)}\rg=\int_E\theta^+\,ds(x',N)+\int_E\theta^-\,ds(x',-N)\,.
$$

Let $A:=\{\omega:\theta(\omega)> 0\}$. By the classical Radon-Nikodym theorem there exists a function  $ f_{x'}$  such that
\be\label{e14}
\forall\;E\in\vS_A,\quad \forall\;x'\in{X'}, \qquad
s(x',N(E))=\int_E f_{x'}\,d\mu\,.
\ee
If $E\in\vS_A$, then

$$
\forall\;E\in\vS_A,\quad \forall\;x'\in{X'},\qquad
\lg{x',\nu(E)}\rg=\int_E\theta^+\,ds(x',N)\,.
$$

Since for every $E\in\vS_A$,  the function $x'\longrightarrow \lg{x',\nu(E)}\rg$ is linear, the same holds true for
$x'\longrightarrow \dint_E\theta^+\,ds(x',N)$. Consequently, if $a,b\in\R$ and $x',y'\in{X'}$, then
\begin{linenomath*}
\begin{eqnarray*}
\int_E\theta^+f_{ax'+by'}\,d\mu&=&\int_E\theta^+\,ds(ax'+by',N) = \\
&=&\int_E\theta^+\,d[a \cdot s(x',N)+b \cdot s(y',N)]=\int_E\theta^+[af_{x'}+bf_{y'}]\,d\mu\,.
\end{eqnarray*}
\end{linenomath*}
As $E\in\vS_A$ is arbitrary and $\theta^+|_A>0$, we obtain the equality
$$ f_{ax'+by'}=af_{x'}+bf_{y'}\quad\mu-a.e. $$
 Then  $x'\longrightarrow s(x',N(E))$ is linear by (\ref{e14}) and this proves that $N$ is a vector measure on $\vS_A$. Similarly for $A^c$.
\end{proof}
\begin{prop}\label{l-pos}
 Let   $M,N:\vS\to {c(X)}$ be two multimeasures and  assume  that  $N$ is pointless.  If    $\theta:\vO\to\R$  is a measurable function such that
  for each $E\in\vS$ and $x' \in X'$
\begin{equation}\label{e17}
	s(x', M(E)) = \int_E \theta\,ds(x', N)\,,
\end{equation}
then $\theta$ is non negative $N$-almost everywhere.
\end{prop}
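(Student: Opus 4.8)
The plan is to show that the measurable set $A:=\{\omega\in\vO:\theta(\omega)<0\}$ belongs to $\mcN(N)$, which is precisely the assertion that $\theta$ is non-negative $N$-almost everywhere. Since $N$ is pointless, it suffices to prove that the restriction $N|_A$ is a vector measure: by the very definition of pointlessness, a set $E\in\vS$ for which $N|_E$ is a vector measure must lie in $\mcN(N)$, and taking $E=A$ gives the conclusion.

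The device that makes this work is to pass to the \emph{positive} set function $\mu_{x'}:=s(x',N)+s(-x',N)$, where $x'\in X'$ is fixed. For every nonempty $C\in c(X)$ one has $s(x',C)+s(-x',C)=\sup_{z\in C}\langle x',z\rangle-\inf_{z\in C}\langle x',z\rangle\in[0,+\infty]$, and since each $s(y',N)$ is countably additive, $\mu_{x'}$ is a $\sigma$-finite measure with values in $[0,+\infty]$. Now for any $E\in\vS_A$ on which both $s(x',N)$ and $s(-x',N)$ are finite signed measures, adding the instances of (\ref{e17}) obtained for $x'$ and for $-x'$ yields
$$s(x',M(E))+s(-x',M(E))=\int_E\theta\,ds(x',N)+\int_E\theta\,ds(-x',N)=\int_E\theta\,d\mu_{x'}.$$
The left-hand side is $\ge 0$ (the same inequality $s(x',\cdot)+s(-x',\cdot)\ge 0$, now applied to $M(E)$), whereas the right-hand side is $\le 0$ since $\theta<0$ on $A\supseteq E$ and $\mu_{x'}\ge 0$; hence $\int_E\theta\,d\mu_{x'}=0$ for every such $E$.

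By $\sigma$-finiteness cover $\vO$ by sets $F_k$ on which $s(\pm x',N)$ are finite, and put $A_n:=\{\omega:\theta(\omega)\le-1/n\}\subseteq A$. Each $A_n\cap F_k$ is of the type just considered, so $0=\int_{A_n\cap F_k}\theta\,d\mu_{x'}\le-\tfrac1n\,\mu_{x'}(A_n\cap F_k)\le 0$, which forces $\mu_{x'}(A_n\cap F_k)=0$; summing over $n$ and $k$ gives $\mu_{x'}(A)=0$, and hence $\mu_{x'}(E)=0$ for every $E\in\vS_A$. As $x'\in X'$ was arbitrary, $s(x',N(E))+s(-x',N(E))=0$ for every $E\in\vS_A$ and every $x'$, i.e. $\langle x',\cdot\rangle$ is constant on $N(E)$ for every $x'$; since $X'$ separates the points of $X$ and $N(E)\ne\emptyset$, each $N(E)$ with $E\in\vS_A$ is a single point. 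Thus $N|_A$ is singleton-valued, the induced point function is countably additive because the measures $s(x',N)$ are, so $N|_A$ is a vector measure, and pointlessness of $N$ yields $A\in\mcN(N)$.

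I expect the only genuinely delicate point to be the bookkeeping with the $(-\infty,+\infty]$-valued measures $s(\pm x',N)$ — in particular justifying that $\int_E\theta\,d\mu_{x'}$ splits as $\int_E\theta\,ds(x',N)+\int_E\theta\,ds(-x',N)$ and that each integral is unambiguously defined — which is dealt with by restricting from the start to sets on which $s(\pm x',N)$ are finite signed measures, available by $\sigma$-finiteness. Everything else is elementary; the conceptual heart is simply that a ``negative density'' $\theta$ would make $N$ singleton-valued on $\{\theta<0\}$, which, by pointlessness, is possible only if that set is $N$-null.
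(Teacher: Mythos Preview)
Your argument is correct and takes a genuinely different route from the paper's. The paper first shows that $M|_H$ (with $H=\{\theta^->0\}$) is a vector measure, by observing that both $x'\mapsto s(x',M(E))$ and $x'\mapsto -s(x',M(E))=\int_E\theta^-\,ds(x',N)$ are sublinear (the latter because $\theta^-\ge 0$ and $x'\mapsto s(x',N(\cdot))$ is sublinear), so that $x'\mapsto s(x',M(E))$ is in fact linear; it then invokes the mechanism behind Lemma~\ref{L1} to transfer this to $N|_H$. You bypass the detour through $M$ entirely: adding the two instances of~(\ref{e17}) for $x'$ and $-x'$ produces the squeeze $0\le s(x',M(E))+s(-x',M(E))=\int_E\theta\,d\mu_{x'}\le 0$, whence the ``width'' measure $\mu_{x'}=s(x',N)+s(-x',N)$ vanishes on $A$ and $N|_A$ is singleton-valued directly. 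Your approach is more elementary and self-contained --- it needs neither the sublinearity-of-integrals step nor the appeal to Lemma~\ref{L1} --- while the paper's route has the side benefit of showing that $M|_H$ is a vector measure as well (cf.\ Remark~\ref{r2}).

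One small refinement worth making explicit: on $A_n\cap F_k$ the function $\theta$ is bounded above by $-1/n$ but not necessarily below, so the identity $\int_E\theta\,d\mu_{x'}=\int_E\theta\,ds(x',N)+\int_E\theta\,ds(-x',N)$ is not automatic when $\theta\notin L^1(|s(\pm x',N)|)$. The clean fix is to intersect further with $\{\theta\ge -m\}$, run your squeeze on $\{-m\le\theta\le -1/n\}\cap F_k$ where $\theta$ is bounded and all integrals are finite, conclude $\mu_{x'}$ vanishes there, and only then let $m\to\infty$ and $n\to\infty$. You essentially anticipated this in your closing remark.
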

\begin{proof}
 Let $\theta=\theta^+-\theta^-$ and let $H:=\{\omega \in \vO: \theta (\omega)^->0\} \in \vS$. Then, we obtain for every $E\in\vS_H$ the equality
$$s(x', M(E))=\int_{E} - \theta^-\,ds(x', N)\,.$$
It is enough to prove that $N(H) = \{ 0\}$. We suppose, by contradiction, that $N(H) \neq  \{ 0\}$.
 Then $x'\longrightarrow s(x', M(E))$ is sublinear for each $E\in\vS_H$ and  also $x'\longrightarrow -s(x', M(E))$ is sublinear, because
 $-s(x', M(E))=\int_{E} \theta^-\,ds(x', N)$. Hence $x'\rightarrow s(x', M(E))$ is linear.  Therefore
$$ 0=s(0, M(E))=s(x'-x', M(E))=s(x', M(E))+s(-x', M(E))$$
 and so $s(-x', M(E))=-s(x', M(E))\neq\pm\infty$,   what yields the linearity of $x'\longrightarrow s(x', M(E))$.
 But that is possible only if $M(E)$ is one point set. This however forces $N$ to be a vector measure on $\vS_H$, what contradicts the pointlessness of $N$.
\end{proof}
\begin{rem}\label{r2}
 \rm It follows from Proposition \ref{l-pos} that an integral defined by the equality (\ref{e17}) does not present the proper approach to integrability with respect to a  pointless multimeasure, since only non-negative functions could be integrable.\\
 If in Proposition \ref{l-pos} $N$ restricted to an element $F\notin\mcN(N)$ is a vector measure, then  $M$ restricted to $F$ is a vector measure
 (see Lemma \ref{L1}) and $\theta|_F$ is not necessarily non-negative.
\end{rem}
\section{Control measures}\label{control}
Our aim is to determine when a multimeasure $M$ can be seen as an integral of a scalar function with respect to a given multimeasure $N$.
We obtain our results under the assumption of the existence of control measures for the  considered  multimeasures.
In the case of $d_H$-multimeasures control measures always exist (see \cite{bds}), but in the  case of an arbitrary multimeasure this is not obvious. The subsequent theorem describes completely the class of Banach spaces where every multimeasure is a $d_H$-multimeasure.
\begin{thm}\label{t5}
Every $cb(X)$-valued multimeasure is a $d_H$-multimeasure if and only if $X$ does not contain any isomorphic copy of $c_0$.
\end{thm}
\begin{proof}
If $c_0\nsubseteq{X}$ isomorphically, then the assertion is proved in \cite[Proposition 4.1]{jca2020}. If $c_0$ can be isomorphically embedded into $X$, then \cite[Example 3.6 and 3.8]{mu21} are two examples of $cb(c_0)$-valued multimeasures which are not $d_H$-multimeasures.
\end{proof}
The proofs below provide a characterisation of multimeasures possessing a control measure in the language of the countable chain condition.
The proofs are related to those given in  \cite{mu20}, where vector measures were under consideration.
\begin{deff}\rm
A multimeasure $M:\vS\to{c(X)}$ satisfies the \textit{ countable chain condition} (ccc) if each family of pairwise disjoint not $M$-null sets is at most countable.
\end{deff}
\begin{lem}\label{l3}
Assume that $M,N:\vS\to{c(X)}$ are two multimeasures such that $M\ll{N}$. If $N$ satisfies (ccc), then for every $A\in\vS\smi\mcN(M)$ there exists
$B\in\vS\smi\mcN(M)$ such that $B\subset{A}$ and $N\ll{M}$ on $\vS_B$.
\end{lem}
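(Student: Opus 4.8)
The plan is to find, inside a given $A \in \vS \smi \mcN(M)$, a subset $B$ on which the reverse absolute continuity $N \ll M$ holds, exploiting the (ccc) hypothesis on $N$ via a maximality/exhaustion argument. First I would consider the family $\mathcal{D}$ of all sets $D \in \vS_A \cap \mcN(N)$ (i.e. subsets of $A$ that are $N$-null). Since $N$ satisfies (ccc), every pairwise disjoint subfamily of $\mathcal{D} \smi \mcN(M)$ is at most countable; this is the key combinatorial input. The natural move is a Zorn-type or countable-exhaustion argument: build a maximal (under the pairwise-disjointness constraint) countable subfamily $\{D_n\}_n$ of $N$-null subsets of $A$ that are not $M$-null, set $D := \bigcup_n D_n$, and put $B := A \smi D$.

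Next I would verify that $B$ does the job. First, $D \in \mcN(N)$: a countable union of $N$-null sets is $N$-null, because $s(x',N)$ is a ($\sigma$-finite) measure for each $x' \in X'$, so $s(x', N(D)) = 0$ for all $x'$, hence $N(D) = \{0\}$ (by the Hahn–Banach separation property of support functions, $\{0\}$ is the only closed convex set whose support function vanishes identically). Since $M \ll N$, it follows that $D \in \mcN(M)$, i.e. $M(D) = \{0\}$. Because $M(A) \neq \{0\}$ and $M(A) = M(B) \oplus M(D) = M(B)$ (additivity of $s(x',M)$ over the disjoint union $A = B \cup D$ gives $s(x',M(A)) = s(x',M(B)) + s(x',M(D)) = s(x',M(B))$ for every $x'$), we get $M(B) = M(A) \neq \{0\}$, so $B \in \vS \smi \mcN(M)$, as required.

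It remains to show $N \ll M$ on $\vS_B$, i.e. every $E \in \vS_B$ with $M(E) = \{0\}$ satisfies $N(E) = \{0\}$. Suppose not: some $E \subseteq B$ has $M(E) = \{0\}$ but $N(E) \neq \{0\}$. Then $E \notin \mcN(N)$, and — this is the point where I would have to be a little careful — I want a subset of $E$ that is $N$-\emph{null} and not $M$-null, to contradict the maximality of $\{D_n\}_n$. But $E$ itself is not $N$-null. So the exhaustion argument must be set up the other way around: instead of collecting $N$-null non-$M$-null sets, I should collect sets on which $N$ \emph{fails} to be absolutely continuous with respect to $M$ in a controlled way. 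Concretely, the cleaner formulation is: let $\mathcal{C}$ be the family of all $C \in \vS_A$ such that $M(C) = \{0\}$; using (ccc) for $N$ applied to disjoint subfamilies of $\mathcal{C} \smi \mcN(N)$, extract a maximal countable disjoint subfamily $\{C_n\}_n \subseteq \mathcal{C} \smi \mcN(N)$, and let $D_0 := \bigcup_n C_n$. Set $B := A \smi D_0$. Then for any $E \in \vS_B$ with $M(E) = \{0\}$: if $N(E) \neq \{0\}$, then $E \in \mathcal{C} \smi \mcN(N)$ and $E$ is disjoint from every $C_n$ (since $E \subseteq B$), contradicting maximality; hence $N(E) = \{0\}$, proving $N \ll M$ on $\vS_B$. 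Finally one checks $B \notin \mcN(M)$: $M(D_0) = \{0\}$ (countable union of $M$-null sets, as above), so $M(A) = M(B) \oplus M(D_0) = M(B)$, whence $M(B) = M(A) \neq \{0\}$.

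The main obstacle, as the discussion above indicates, is choosing the \emph{right} family to exhaust against: one wants the maximal disjoint family to consist of the sets witnessing the failure of $N \ll M$ relative to the ambient set, namely the $M$-null-but-not-$N$-null subsets of $A$, so that leftover set $B = A \smi \bigcup_n C_n$ automatically has no such bad subsets while still remaining non-$M$-null. Everything else — $\sigma$-additivity of the support measures, the fact that a closed convex set with identically zero support function is $\{0\}$, and the additivity $s(x', M(A)) = s(x', M(B)) + s(x', M(D_0))$ over disjoint unions — is routine and has already been recorded in the preliminaries (e.g. in Remark \ref{rem2.4}).
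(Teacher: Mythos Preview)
Your corrected (``cleaner'') formulation is correct and is essentially the paper's own argument: both proofs exhaust, via Zorn and (ccc), a maximal pairwise disjoint family of $M$-null but non-$N$-null subsets of $A$, observe that its union $D_0$ is $M$-null, and take $B=A\smi D_0$. The only cosmetic difference is that the paper packages this as a proof by contradiction (assume no $B$ works, then $A\smi\bigcup_n D_n$ yields a new member of the family), whereas you construct $B$ directly; your initial attempt with the roles of $M$- and $N$-null reversed was indeed the wrong family, but you caught and fixed it.
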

\begin{proof}
Assume that there is a set $A\in\vS\smi\mcN(M)$ such that for every $B\in\vS_A\smi\mcN(M)$ there exists $D\in[\mcN(M)\cap{B}]\smi\mcN(N)$.
The lemma of Kuratowski-Zorn and (ccc) give the existence of at most countable maximal family $\{D_n\}_n$ of pairwise disjoint sets
$D_n\in[\mcN(M)\cap{A}]\smi\mcN(N)$.\\
 One can easily check that $\bigcup_nD_n\in\mcN(M)$. Since $A\notin\mcN(M)$, we have $A\smi\bigcup_nD_n\notin \mcN(M)$,
 but this contradicts the maximality of $\{D_n\}_n$.
\end{proof}
\begin{lem}\label{l5}
If $N:\vS\to{c(X)}$ is a multimeasure satisfying (ccc), then there exists at most countable family $\{x_n':n\in\N\}\subset X'$ satisfying the equality
\begin{equation}\label{e12}
\bigcap_n\mcN[s(x_n',N)]=\bigcap_{x'\in{X'}}\mcN[s(x',N)]\,.
\end{equation}
\end{lem}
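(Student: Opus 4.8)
The plan is to find the $x'_n$ by an exhaustion argument isolating a countable set of functionals that already ``sees'' every $N$-null set. Put $\mcI:=\bigcap_{x'\in X'}\mcN[s(x',N)]$; for any family $\{x'_n\}$ one automatically has $\mcI\subseteq\bigcap_n\mcN[s(x'_n,N)]$, so only the reverse inclusion is at stake. Two preliminary observations: each $\mcN[s(x',N)]$ is a genuine $\sigma$-ideal of $\vS$ (closure under countable unions follows from countable additivity of $s(x',N)$ after disjointifying), and since $B_{X'}$ determines the elements of $c(X)$, if $E\notin\mcI$ then there are $F\in\vS_E$ and $x'\in B_{X'}$ with $s(x',N(F))\neq0$, hence $N(F)\neq\{0\}$. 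Combined with (ccc) this gives the statement I shall use repeatedly: \emph{every pairwise disjoint subfamily of $\vS\smi\mcI$ is at most countable}, and all functionals occurring below may be taken in $B_{X'}$.

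Next I would set up the $\sigma$-ideal doing the bookkeeping. Call $W\in\vS$ \emph{countably dominated} if there is a countable $S_W\subset B_{X'}$ with $\bigcap_{y'\in S_W}\mcN[s(y',N)]\cap\vS_W\subseteq\mcN[s(x',N)]$ for every $x'\in B_{X'}$, and let $\mathcal W$ be the family of such sets. Then $\mathcal W$ is a $\sigma$-ideal containing $\mcI$: stability under subsets is immediate, and if $W=\bigcup_kW_k$ with each $W_k$ countably dominated then $\bigcup_kS_{W_k}$ witnesses that $W$ is countably dominated. By Kuratowski--Zorn and (ccc) choose a maximal pairwise disjoint family $\{W_n\}_n\subset\mathcal W\smi\mcI$; it is countable, $W_0:=\bigcup_nW_n\in\mathcal W$ with witnessing family $\{x'_k\}_k:=\bigcup_nS_{W_n}$, and (padding the list with an arbitrary functional if necessary) we are done as soon as $\vO\smi W_0\in\mcI$: for $E\in\bigcap_k\mcN[s(x'_k,N)]$ and $x'\in B_{X'}$, each $E\cap W_n$ lies in $\bigcap_{y'\in S_{W_n}}\mcN[s(y',N)]\cap\vS_{W_n}\subseteq\mcN[s(x',N)]$, while $E\smi W_0\in\mcI\subseteq\mcN[s(x',N)]$, so $E\in\mcN[s(x',N)]$; since $B_{X'}$ is determining this yields $\bigcap_k\mcN[s(x'_k,N)]=\mcI$.

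Thus everything reduces to showing that $\mathcal W$ is \emph{dense below $\vO$}, i.e.\ every $R\in\vS\smi\mcI$ has a non-$\mcI$ subset belonging to $\mathcal W$; this forbids $R:=\vO\smi W_0\notin\mcI$ by maximality of $\{W_n\}_n$. I would prove density by contradiction with a transfinite recursion of length $\le\omega_1$. If $R$ had no non-$\mcI$ subset in $\mathcal W$, then at each stage $\xi<\omega_1$ the countable set $\{y'_\zeta:\zeta<\xi\}$ fails to dominate $N$ on $R$ (otherwise $R$ itself would be a non-$\mcI$ member of $\mathcal W$), so one can pick $y'_\xi\in B_{X'}$ and $B_\xi\in\vS_R\smi\mcI$ with $B_\xi\in\bigcap_{\zeta<\xi}\mcN[s(y'_\zeta,N)]$ but $B_\xi\notin\mcN[s(y'_\xi,N)]$; then $C_\xi:=B_\xi\smi\bigcup_{\zeta<\xi}B_\zeta$ is a legitimate member of $\vS$ (the union is countable, $\xi$ being a countable ordinal) and the $C_\xi$ are \emph{automatically pairwise disjoint}, since for $\zeta<\xi$ the set $C_\xi\subseteq B_\xi$ avoids $B_\zeta\supseteq C_\zeta$. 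If uncountably many $C_\xi$ lie outside $\mcI$ we obtain an uncountable pairwise disjoint subfamily of $\vS\smi\mcI$, contradicting (ccc); if instead only countably many do, then $\{\xi:C_\xi\in\mcI\}$ is co-countable, hence stationary in $\omega_1$, so the regressive map $\xi\mapsto\min\{\zeta<\xi:B_\xi\cap B_\zeta\notin\mcI\}$ (well defined there, because $B_\xi$ is, modulo $\mcI$, contained in $\bigcup_{\zeta<\xi}B_\zeta$) is constant on a stationary set by Fodor's lemma, and relativising the construction inside the resulting fixed $B_{\zeta_0}$ returns us to the previous case. The step I expect to be the main obstacle is precisely this last one: arranging the disjointification so that an \emph{uncountable} --- not merely countable --- pairwise disjoint family of non-$N$-null sets is extracted, while never forming an uncountable union of elements of $\vS$; it is here that the countable chain condition does its essential work.
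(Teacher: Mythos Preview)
Your overall framework---defining the $\sigma$-ideal $\mathcal W$ of countably dominated sets, taking a maximal disjoint family inside it, and reducing everything to the density claim that every $R\in\vS\smi\mcI$ contains a non-$\mcI$ member of $\mathcal W$---is sound and is essentially the skeleton of the paper's argument as well. The problem is your proof of density.

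The Fodor step does not close. After one application you obtain a fixed $\zeta_0$ and a stationary $S'$ with $A_\xi:=B_\xi\cap B_{\zeta_0}\notin\mcI$ for $\xi\in S'$. These $A_\xi$ still satisfy $A_\xi\in\mcN[s(y'_\zeta,N)]$ for $\zeta<\xi$, but you no longer know that $A_\xi\notin\mcN[s(y'_\xi,N)]$; you only know $A_\xi\notin\mcI$, which forces a \emph{new} functional $z'_\xi$, and then the crucial nesting property $A_\eta\in\mcN[s(z'_\xi,N)]$ for $\eta>\xi$ is lost. If instead you literally restart the whole recursion inside $B_{\zeta_0}$, nothing prevents Case~2 from recurring: you get an increasing sequence $\zeta^{(1)}<\zeta^{(2)}<\cdots$ and a decreasing chain $B_{\zeta^{(1)}}\supseteq B_{\zeta^{(1)}}\cap B_{\zeta^{(2)}}\supseteq\cdots$ of non-$\mcI$ sets, which is no contradiction. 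So ``returns us to the previous case'' is an infinite regress, not a termination.

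The paper bypasses this entirely by invoking Lemma~\ref{l3} (applied with $M=s(x',N)$, noting $s(x',N)\ll N$): for every $x'$ and every $A\notin\mcN[s(x',N)]$ there is $B\subseteq A$, $B\notin\mcN[s(x',N)]$, on which $N\ll s(x',N)$. Such a $B$ lies in your $\mathcal W$ with the singleton witness $\{x'\}$, so density is immediate: given $R\notin\mcI$, pick any $x'$ with $R\notin\mcN[s(x',N)]$ and apply Lemma~\ref{l3}. With density in hand, your exhaustion argument (or the paper's direct maximal-family argument with the sets $D_{x'}$) finishes in a few lines. Replace the transfinite recursion by this appeal to Lemma~\ref{l3} and your proof is complete.
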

\begin{proof}
 It follows from Lemma \ref{l3} that for every $x'\in{X'}$ there exists $D_{x'}\in\vS\smi\mcN[s(x',N)]$ such that
$N\ll{s(x',N)}$ on $\vS_{D_{x'}}$. The lemma of Kuratowski-Zorn and (ccc) guarantee existence of at most countable maximal family $\{D_n\}_n$ of disjoint
 sets $D_n$  corresponding to measures $s(x_n',N)$.\\
  Let $D=\bigcup_n D_n$.
If $A\in\vS$ and $A\cap{D}=\emp$, then clearly $A\in \mcN[s(x',N)]$, for every $x'\in{B_{X'}}$.\\
 Let now $A$ be an $s(x_n',N)$-null set, for every $n$. We have $A\cap{D_n}\in\mcN(N)$ for all $n$ and so $A\cap{D_n}\in\mcN[s(x',N)]$ for every $x'$.
  Consequently, $A\cap{D}\in\mcN[s(x',N)]$  for every $x'$. Hence,
$$A=(A\smi{D})\cup(A\cap{D})\in\mcN[s(x',N)]\,.$$
That proves (\ref{e12}).
\end{proof}
\begin{thm}\label{T1}
A multimeasure $N:\vS\to{c(X)}$ has a finite control measure if and only if it  satisfies (ccc).
Then, there exists a control measure that is equivalent to $N$.
\end{thm}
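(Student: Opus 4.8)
The plan is to treat the two implications separately; the direction \emph{finite control measure $\Rightarrow$ (ccc)} is immediate and I would dispose of it first. If $\mu$ is a finite control measure for $N$, then every $A\in\vS\smi\mcN(N)$ has $\mu(A)>0$, and a finite measure cannot carry an uncountable pairwise disjoint family of sets of positive measure (for each $k$ only finitely many of them can have measure exceeding $1/k$, so the whole family is a countable union of finite sets); hence any pairwise disjoint family of non-$N$-null sets is at most countable, i.e. $N$ satisfies (ccc). The substance of the theorem is therefore the converse, together with the stronger conclusion that the control measure may be chosen equivalent to $N$.

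For the converse I would proceed as follows. Assuming (ccc), Lemma \ref{l5} furnishes an at most countable set $\{x_n':n\in\N\}\subset B_{X'}$ with $\bigcap_n\mcN[s(x_n',N)]=\bigcap_{x'\in X'}\mcN[s(x',N)]$; and since a nonempty closed convex set is the intersection of the closed half-spaces containing it, one has $N(F)=\{0\}$ precisely when $s(x',N(F))=0$ for all $x'\in X'$, so this common intersection coincides with $\mcN(N)$. Next, for each fixed $n$ the scalar measure $s(x_n',N)\colon\vS\to(-\infty,+\infty]$ is $\sigma$-finite, so I would split $\vO$ into countably many pieces $\vO^{(n)}_k$ on each of which $s(x_n',N)$ is finite, hence of finite variation, and form from the restricted variations a finite nonnegative measure $\mu_n$ with $\mu_n(\vO)\le 1$ and $\mcN(\mu_n)=\mcN[s(x_n',N)]$, by the standard averaging $\mu_n(E):=\sum_k 2^{-k}|s(x_n',N)|(E\cap\vO^{(n)}_k)/(1+|s(x_n',N)|(\vO^{(n)}_k))$. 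Finally I would set $\mu:=\sum_n 2^{-n}\mu_n$.

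It then remains to check that $\mu$ works: it is a finite nonnegative measure, and because all summands are nonnegative, $\mu(E)=0$ iff $\mu_n(E)=0$ for every $n$, whence $\mcN(\mu)=\bigcap_n\mcN(\mu_n)=\bigcap_n\mcN[s(x_n',N)]=\mcN(N)$. In particular $\mu(E)=0$ implies $N(E)=\{0\}$, so $\mu$ is a control measure for $N$, and the full equality $\mcN(\mu)=\mcN(N)$ is exactly the assertion that $\mu$ and $N$ are equivalent. The only genuinely technical point is the passage from the $(-\infty,+\infty]$-valued $\sigma$-finite scalar measures $s(x_n',N)$ to the honest finite measures $\mu_n$; the conceptual obstacle — cutting the uncountable family $\{s(x',N):x'\in X'\}$ down to a countable subfamily that still detects all of $\mcN(N)$ — is precisely what (ccc) buys, and it has already been overcome in Lemma \ref{l5}.
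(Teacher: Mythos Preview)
Your argument is correct and follows essentially the same route as the paper: both directions are handled identically, and for the converse you invoke Lemma~\ref{l5} to extract a countable family $\{x_n'\}$, replace each $\sigma$-finite $s(x_n',N)$ by an equivalent finite measure, and sum with weights $2^{-n}$. The only difference is cosmetic: the paper simply asserts the existence of a finite $\nu_{x'}$ equivalent to $s(x',N)$, whereas you spell out the standard construction via a partition and weighted variations.
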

\begin{proof}
 It is obvious that the existence of a control measure yields (ccc) of $N$. So assume that $N$ satisfies (ccc). For each $x'\in{X'}$ let
 $\nu_{x'}:\vS\to[0,+\infty)$ be a measure equivalent to $s(x',N)$.
By Lemma \ref{l5} there exist $x_n'\in{X'}\,,n\in\N$, such that
$$\mcN(N)=\bigcap_{x'\in{X'}}\mcN[s(x',N)]=\bigcap_n\mcN[s(x_n',N)]=\bigcap_n\mcN(\nu_{x_n'})\,.$$
The measure
\begin{eqnarray}\label{eq:control}
\mu(E):=\sum_{n=1}^{\infty}\frac{1}{2^n}\frac{\nu_{x_n'}(E)}{1+\nu_{x_n'}(\vO)}
\end{eqnarray}
is the required control measure for $N$ that is  equivalent to $N$.
\end{proof}
\begin{thm}\label{esempi}
  If $X'$ is weak$'$-separable, then every multimeasure $M:\vS\to{c(X)}$ satisfies (ccc).
\end{thm}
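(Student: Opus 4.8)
The plan is to use $\sigma(X',X)$-separability of $X'$ (i.e.\ weak$'$-separability) to recognize $M$-null sets by testing support functions at only countably many functionals, and then to reduce the (ccc) property of $M$ to the (obvious) (ccc) property of $\sigma$-finite scalar measures.

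First, fix a countable set $\{x_n':n\in\N\}$ which is $\sigma(X',X)$-dense in $X'$. The crucial observation is that, for $E\in\vS$,
\[
M(E)=\{0\}\ \iff\ s(x_n',M(E))=0\ \text{ for every }\ n\in\N .
\]
The implication ``$\Rightarrow$'' is immediate. Conversely, suppose $s(x_n',M(E))=0$ for all $n$ and take any $x\in M(E)$. For each $n$ one has $\langle x_n',x\rangle\le s(x_n',M(E))=0$; since the functional $x'\mapsto\langle x',x\rangle$ is $\sigma(X',X)$-continuous, the set $\{x'\in X':\langle x',x\rangle\le0\}$ is $\sigma(X',X)$-closed and contains the dense set $\{x_n':n\in\N\}$, hence it coincides with $X'$. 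Applying this to $x'$ and to $-x'$ gives $\langle x',x\rangle=0$ for every $x'\in X'$, so $x=0$ because $X'$ separates the points of $X$. Thus $M(E)\subseteq\{0\}$, and being nonempty $M(E)=\{0\}$.

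Then, let $\{E_\alpha\}_{\alpha\in I}\subset\vS\smi\mcN(M)$ be a pairwise disjoint family. By the observation above, for each $\alpha\in I$ there is some $n$ with $s(x_n',M(E_\alpha))\neq0$, so $I=\bigcup_{n\in\N}I_n$ where $I_n:=\{\alpha\in I:s(x_n',M(E_\alpha))\neq0\}$; it therefore suffices to show that each $I_n$ is at most countable. Fix $n$ and put $\lambda:=s(x_n',M)$, which by the definition of a multimeasure is a $\sigma$-finite measure with values in $(-\infty,+\infty]$; write $\vO=\bigcup_k\vO_k$ with the $\vO_k$ pairwise disjoint and $|\lambda|(\vO_k)<+\infty$ for every $k$, so that $\lambda$ is a bounded signed measure on each $\vS_{\vO_k}$. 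If $\lambda(E_\alpha)\neq0$ then $\lambda(E_\alpha\cap\vO_k)\neq0$ for some $k$; on the other hand, for each fixed $k$ the sets $E_\alpha\cap\vO_k$, $\alpha\in I_n$, are pairwise disjoint subsets of $\vO_k$, hence $\sum_{\alpha\in I_n}|\lambda|(E_\alpha\cap\vO_k)\le|\lambda|(\vO_k)<+\infty$ and only countably many of them can have nonzero $\lambda$-measure. So $I_n$ is a countable union of at most countable sets; consequently $I$ is at most countable and $M$ satisfies (ccc).

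I expect the only genuinely delicate point to be the ``$\Leftarrow$'' direction of the observation in the second paragraph: one must exploit $\sigma(X',X)$-density via the $\sigma(X',X)$-closedness of the half-spaces $\{x':\langle x',x\rangle\le0\}$ (a closure argument, not a sequential one), and check that the possibly infinite values of $s(\cdot,M(E))$ create no difficulty, which is the case because only the inequality ``$\le0$'' is used. Everything afterwards is routine scalar measure theory. Incidentally, combining this with Theorem~\ref{T1} shows that whenever $X'$ is $\sigma(X',X)$-separable every multimeasure $M:\vS\to c(X)$ in fact possesses a finite control measure equivalent to $M$.
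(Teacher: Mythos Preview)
Your proof is correct and follows essentially the same approach as the paper: both arguments use a countable weak$'$-dense set $\{x_n'\}$ to show that $M(E)=\{0\}$ iff $s(x_n',M(E))=0$ for all $n$ (via the weak$'$-closedness of half-spaces), and then reduce (ccc) for $M$ to (ccc) for each $\sigma$-finite scalar measure $s(x_n',M)$. The only cosmetic difference is that the paper phrases it as a contradiction argument with an $\omega_1$-indexed family, whereas you argue directly and spell out the $\sigma$-finite decomposition more explicitly.
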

\begin{proof} Let $\{x_n':n\in\N\}$ be  weak$'$-dense in $X'$. Suppose there exists a multimeasure $M:\vS\to{c(X)}$ and an uncountable family
$\{B_{\alpha}\in\vS: \alpha\in{\mathbb{A}}\}$ of pairwise disjoint sets with $M(B_{\alpha})\neq\{0\}$.
 Without loss of generality we may assume that the family is ordered by the ordinals less than $\omega_1$. Due to the countability of the weak$'$-dense set, there
 exists $\beta<\omega_1$ such that $s(x_n',M(B_{\alpha}))=0$,
 for every $\alpha>\beta$ and every $n\in\N$. Hence, if $\alpha>\beta$ and $x\in{B_{\alpha}}$, then $\lg{x_n',x}\rg\leq 0$ for every $n$.
 It follows that $\lg{x',x}\rg\leq0$ for every $x'\in{X'}$, if $x\in{B_{\alpha}}$ and $\alpha>\beta$. This is of course impossible for $x\neq0$.
 \end{proof}
 The following recent result describes Banach spaces with all $cb(X)$-multimeasures possessing control measures. 
\begin{thm} \cite{Ro}
 If $X$ is a Banach space not containing any isomorphic copy of $c_0(\omega_1)$, then each multimeasure $M:\vS\to{cb(X)}$ admits a control measure.
 \end{thm}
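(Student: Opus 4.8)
The plan is to reduce everything to Theorem~\ref{T1}: by that result it suffices to check that, when $X$ contains no isomorphic copy of $c_0(\omega_1)$, every multimeasure $M\colon\vS\to cb(X)$ satisfies the countable chain condition, for then $M$ even admits a control measure equivalent to it (this is the same mechanism that underlies Theorem~\ref{esempi}).

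Suppose, towards a contradiction, that some $M\colon\vS\to cb(X)$ fails (ccc), and fix an uncountable pairwise disjoint family $\{B_\alpha\colon\alpha<\omega_1\}\subseteq\vS$ with $M(B_\alpha)\neq\{0\}$ for every $\alpha$. Pick $z_\alpha\in M(B_\alpha)\setminus\{0\}$ for each $\alpha$. Since $\omega_1=\bigcup_{k\in\N}\{\alpha\colon\|z_\alpha\|>1/k\}$ is not a countable union of countable sets, one of these sets is uncountable, so there are $\delta>0$ and an uncountable $I\subseteq\omega_1$ with $\|z_\alpha\|\geq\delta$ for every $\alpha\in I$.

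Next I would record that $M$ has bounded range. For each $x'\in X'$ the set function $s(x',M)\colon\vS\to\R$ is a finite, countably additive signed measure, hence bounded, so $\sup_{E\in\vS}\sup_{y\in M(E)}|\langle x',y\rangle|=\max\{\sup_{E}s(x',M(E)),\,\sup_{E}s(-x',M(E))\}<\infty$. Therefore $\mcR(M)=\bigcup_{E\in\vS}M(E)$ is weakly bounded, hence norm bounded since $X$ is a Banach space; put $R:=\sup\{\|y\|\colon y\in\mcR(M)\}$. Given a finite $F\subseteq I$ and signs $\varepsilon_\alpha\in\{-1,+1\}$, split $F=F^{+}\cup F^{-}$ according to the signs; finite additivity of $s(x',M)$ together with additivity of support functions yields $M\bigl(\bigcup_{\alpha\in F^{\pm}}B_\alpha\bigr)=\bigoplus_{\alpha\in F^{\pm}}M(B_\alpha)$, whence $\sum_{\alpha\in F^{\pm}}z_\alpha\in\mcR(M)$ and so $\bigl\|\sum_{\alpha\in F}\varepsilon_\alpha z_\alpha\bigr\|\leq\bigl\|\sum_{\alpha\in F^{+}}z_\alpha\bigr\|+\bigl\|\sum_{\alpha\in F^{-}}z_\alpha\bigr\|\leq 2R$. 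Thus the family $(z_\alpha)_{\alpha\in I}$ is weakly unconditionally Cauchy (equivalently $\sum_{\alpha\in I}|\langle x',z_\alpha\rangle|\leq 2R\|x'\|$ for every $x'\in X'$) and semi-normalized: $\delta\leq\|z_\alpha\|\leq R$.

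The concluding, decisive step is to invoke the transfinite form of the Bessaga--Pe{\l}czy\'{n}ski $c_0$-theorem: if $X$ contains no copy of $c_0(\omega_1)$, then every weakly unconditionally Cauchy family $(x_\alpha)_{\alpha<\omega_1}$ in $X$ is null, i.e.\ $\{\alpha\colon\|x_\alpha\|\geq t\}$ is finite for every $t>0$; equivalently, a semi-normalized weakly unconditionally Cauchy family indexed by $\omega_1$ always contains an uncountable subfamily equivalent to the unit vector basis of $c_0(\omega_1)$. Applied to $(z_\alpha)_{\alpha\in I}$ this contradicts the hypothesis on $X$; hence $M$ satisfies (ccc) and Theorem~\ref{T1} completes the proof. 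I expect this last input to be the real obstacle. The upper $c_0$-estimate is automatic from weak unconditional Cauchyness (a closed-graph argument gives $\sup_{\|x'\|\leq 1}\sum_{\alpha\in I}|\langle x',z_\alpha\rangle|<\infty$), but extracting the $c_0(\omega_1)$-subfamily requires a combinatorial refinement: one picks $x'_\alpha\in B_{X'}$ with $\langle x'_\alpha,z_\alpha\rangle>\delta/2$, observes that each $(\langle x'_\alpha,z_\beta\rangle)_{\beta\in I}$ lies in $\ell_{1}(I)$ and so has countable, hence bounded-in-$\omega_1$, support, and then runs a transfinite recursion (a pressing-down/free-set construction) thinning $I$ to an uncountable $J$ on which the cross terms $\langle x'_\alpha,z_\beta\rangle$ with $\alpha\neq\beta$ become summably small; this produces an almost biorthogonal system and with it the missing lower estimate. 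That set-theoretic extraction is where essentially all the difficulty lies.
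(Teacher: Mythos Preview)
The paper does not supply its own proof of this theorem: it is quoted from Rodr\'{i}guez \cite{Ro} with no argument, and the surrounding text explicitly says the authors ``leave aside the difficult question'' and ``quote only [this] very recent result.'' There is therefore no in-paper proof against which to measure your proposal.

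On its own merits, your opening reduction to (ccc) via Theorem~\ref{T1} is the natural move, and the bounded-range and WUC computations are correct for $cb(X)$-valued multimeasures: each $s(x',M)$ is then a finite signed measure, so $\mcR(M)$ is weakly (hence norm) bounded, and finite additivity $M\bigl(\bigcup_{\alpha\in F^{\pm}}B_\alpha\bigr)=\bigoplus_{\alpha\in F^{\pm}}M(B_\alpha)$ gives the uniform sign-bound $\bigl\|\sum_{\alpha\in F}\varepsilon_\alpha z_\alpha\bigr\|\leq 2R$. You are also right that the entire substance of the theorem sits in the transfinite Bessaga--Pe\l czy\'nski step. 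Your outline of that step, however, is a sketch rather than a proof: the observation that each $(\langle x'_\alpha,z_\beta\rangle)_\beta\in\ell_1(I)$ has countable support does not by itself produce a regressive map suitable for pressing-down, and turning ``summably small cross terms'' into a genuine lower $c_0(\omega_1)$-estimate is exactly the combinatorial work that has to be done. To make this self-contained you would need either to carry that extraction out in full or to cite it precisely from the literature---which is in effect what the present paper does by deferring the whole statement to \cite{Ro}.
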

\section{Arbitrary multimeasures}\label{s-arbitrary}
Now we start by examining the problem when a multimeasure $M$ can be represented as an integral of a scalar function with respect to a given multimeasure $N$, for suitable multisubmeasures this problem was also faced in \cite{ccgs2018} for the Gould integral.\\
In the  case of  vector measures  with values in a locally convex space a Radon-Nikod\'{y}m theorem for the  Bartle-Dunford-Schwartz integral was obtained by Musia{\l} in \cite{mu}.
If $Y$ is a  locally convex space and $\nu, \kappa:\vS\to Y$ are vector measures, then   the following  definitions  were formulated  in \cite{mu}:
\begin{description}
\item[usac)] $\nu$ is \textit{  uniformly scalarly absolutely continuous} (usac) with respect to  $\kappa$, if for each $\ve>0$ there exists $\delta>0$
such that for each $y'\in{Y'}$ and each $E\in\vS$, the inequality $|y'\kappa |(E)<\delta$ yields
 $|y' \nu|(E)<\ve$. We denote it by $\nu \lll \kappa$.
\item[usd)]  $\nu$ is \textit{  uniformly scalarly dominated } (usd) by  $\kappa$,
if there exists  $b\in\R^+$ such that
$\forall \;y'\in{Y'}, \, \forall\;E\in\vS$ it is $
|y' \nu|(E) \leq b\, |y'\kappa |(E).$
\item[sub)]
 $\nu$ is  \textit{ subordinated} to  $\kappa$, if
there exists  $d\in\R^+$ such that for every $E\in\vS$
$$\nu (E)\in  d\,\ov{aco} \, \mcR (\kappa_E).$$
\end{description}
 We say that a vector measure $\nu:\vS\to Y$ has  \textit{ locally} a property  with respect to a vector measure $\kappa:\vS\to Y$,
  if for each $E\in\vS\smi{\mathcal N}(\kappa)$ there exists $F\subset{E}$ with $F\in\vS\smi{\mathcal N}(\kappa)$  such that $\nu$ has this property
   with respect to $\kappa$ on the set $F$.\\

In order to find proper conditions guarateeing the differentiation of an arbitrary $cb(X)$-valued multimeasure $M$ with respect to $N$,
 we must adapt properly the definitions given for vector measures.
\begin{deff}\label{strong}
\rm
Given  two  multimeasures $M,N:\vS\to{cb(X)}$ we say that:
\begin{description}
\item[(usac)]
$M$ is   \textit{  uniformly scalarly absolutely continuous } with respect to  $N$
 ($usac$ or $M\lll N$),  if there exists $A\in\vS$ such that
\begin{eqnarray}\label{eq-susac}
&& \forall\;\ve>0,\;\, \exists\;\delta>0:\quad \forall\;\alpha,\beta\in\R, \;\forall\;x',y'\in X', \;\forall\;E\in\vS\;\\ \nonumber
&&\left[
|\alpha s(x',N)+\beta s(y',N)|(E\cap{A})+|\alpha s(x',-N)+\beta s(y',-N)|(E\cap{A^c})\leq\delta \right] \Rightarrow\\ \nonumber
&&\Rightarrow|\alpha s(x',M)+\beta s(y',M)|(E)\leq\ve;
\end{eqnarray}
\item[(usd)]
 $M$ is \textit{   uniformly scalarly dominated } ($usd$) by a multimeasure  $N$,
if there exist  $c\in\R$ and $A\in\vS$ such that $\forall \;\alpha, \beta\in\R,\;\forall\;x',y'\in X',\;\forall\;E\in\vS\;$
\begin{eqnarray*}
&& |\alpha s(x',M)+\beta{s(y',M)}|(E)\leq \\
&\leq&{c}|\alpha{s(x',N)}+\beta{s(y',N)}|(E\cap{A})+c|\alpha{s(x',-N)}+\beta{s(y',-N)}|(E\cap{A^c})\,;
\end{eqnarray*}
\item[(uss)] $M$ is \textit{   uniformly scalarly subordinated } ($uss$) to $N$,
 if there exist  $d\in\R^+$ and $A\in\vS$ such that $\forall \;\alpha, \beta\in\R,\;\forall\;x',y'\in X',\;\forall\;E\in\vS\;$
\begin{eqnarray*}
&& \alpha s(x',M (E) )+\beta s(y',M (E) )\in \\
&\in & d\, \ov{aco}\{ [\alpha s(x',N(F\cap{A})) +\beta s(y',N(F\cap{A})) ]\colon F\in\vS_E \} +\\ &+&
 d\, \ov{aco}\{ [\alpha s(x',-N (F\cap{A^c}))  +\beta s(y',-N (F\cap{A^c}))] \colon F\in\vS_E \}\,.
 \end{eqnarray*}
\end{description}
\end{deff}
\begin{rem}\label{r1}
\rm If $M$ and $N$ are vector measures then the   uniform scalar absolute continuity and   the   uniform scalar domination  of $M$ with respect  to $N$
 coincide respectively with the corresponding definitions given for vector measures, therefore we use the same notation for the  two notions.\\

  In case of $uss$ the definition looks different with respect to $sub$ and so we decided to change the name.
They actually are equivalent if we assume that $X$ is a  quasi-complete locally convex space.
This is due to the fact that for every $E \in \vS$ and for every $x' \in X'$
\begin{eqnarray}\label{sub-uss}
&& \ov{aco}\{\lg{x',\mcR(\kappa_E)}\rg\}=\lg{x',\ov{aco}(\mcR(\kappa_E))\rg}
\quad \mbox{with} \\
&& \nonumber \lg{x',\mcR(\kappa_E)}\rg:=\{\lg{x',w}\rg:w\in{\mcR(\kappa_E)}\}.
\end{eqnarray}
Here the  inclusion $\supset$ is a consequence of the continuity of $x' \in X'$:
$$
\lg{x',\ov{aco}(\mcR(\kappa_E))\rg}\subset\ov{\lg{x',aco(\mcR(\kappa_E))}\rg}=\ov{aco}\{\lg{x',\mcR(\kappa_E)}\rg\}\,.
$$
While for the reverse inclusion, we can  observe that  the set $\ov{aco}\,\mcR(\kappa_E)$ is   symmetric and  weakly compact,  due to \cite[Theorem IV.6.1]{kk}.\\
So we have
  $\lg{x',\ov{aco}\,\mcR(\kappa_E)}\rg=[-\alpha,\alpha]$. The inclusion $\mcR(\kappa_E) \subset \ov{aco}\, {\mcR(\kappa_E)}$ yields $\lg{x', {\mcR(\kappa_E)}}\rg \subset
[-\alpha,\alpha]$ and so $\ov{aco}\,\lg{x', {\mcR(\kappa_E)}}\rg\subset [-\alpha,\alpha]$.
\end{rem}
\begin{prop}\label{+forte}  For arbitrary multimeasures $M,N :\vS\to{cb(X)}$  the properties $usac$ and $usd$ are equivalent and $uss$ implies each of them.
\end{prop}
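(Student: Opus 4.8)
The plan is to prove the three implications $usd\Rightarrow usac$, $usac\Rightarrow usd$ and $uss\Rightarrow usd$ separately, in each case keeping the witnessing set $A\in\vS$ of the hypothesis fixed, so that it also serves as the witnessing set of the conclusion; the remaining implication $uss\Rightarrow usac$ then follows by composing the last two. At the outset one uses the standing fact (recalled in Section \ref{s-prel}) that, because $M$ and $N$ are $cb(X)$-valued, for every $x'\in X'$ the set functions $s(x',\pm M)$ and $s(x',\pm N)$ are finite signed measures; hence every linear combination $\alpha s(x',N)+\beta s(y',N)$, $\alpha s(x',-N)+\beta s(y',-N)$, $\alpha s(x',M)+\beta s(y',M)$ occurring in Definition \ref{strong} is a finite signed measure and its variation is a finite measure. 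The implication $usd\Rightarrow usac$ is then immediate: without loss of generality $c>0$, and given $\ve>0$ the choice $\delta=\ve/c$ does the job.

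For $usac\Rightarrow usd$ I would run a scaling argument exploiting the positive homogeneity of variations. Apply $usac$ with $\ve=1$ to obtain $\delta>0$. Fix $\alpha,\beta\in\R$, $x',y'\in B_{X'}$, $E\in\vS$ and set
\[
p:=|\alpha s(x',N)+\beta s(y',N)|(E\cap A)+|\alpha s(x',-N)+\beta s(y',-N)|(E\cap A^c),\qquad q:=|\alpha s(x',M)+\beta s(y',M)|(E).
\]
Replacing $(\alpha,\beta)$ by $(t\alpha,t\beta)$ with $t>0$ multiplies the signed measures involved, hence their variations, by $t$, so it sends $p\mapsto tp$ and $q\mapsto tq$. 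If $p>0$, apply the $usac$ implication to the tuple $(t\alpha,t\beta,x',y',E)$ with $t=\delta/p$: its hypothesis becomes $tp=\delta\le\delta$, so its conclusion gives $tq\le 1$, i.e.\ $q\le p/\delta$. If $p=0$, then $tq\le 1$ for every $t>0$, which forces $q=0$. In both cases $q\le(1/\delta)p$, which is exactly the $usd$ estimate with $c=1/\delta$ and the same $A$.

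For $uss\Rightarrow usd$ I would fix $\alpha,\beta,x',y'$ and put $\lambda:=\alpha s(x',N(\cdot))+\beta s(y',N(\cdot))$, $\mu:=\alpha s(x',-N(\cdot))+\beta s(y',-N(\cdot))$ and $\rho:=\alpha s(x',M(\cdot))+\beta s(y',M(\cdot))$, all finite signed measures. For any $F\in\vS$, as $F'$ ranges over $\vS_F$ the set $\{\alpha s(x',N(F'\cap A))+\beta s(y',N(F'\cap A)):F'\in\vS_F\}$ is exactly the range $\{\lambda(G):G\in\vS_{F\cap A}\}$, which lies in the interval $[-|\lambda|(F\cap A),|\lambda|(F\cap A)]$; that interval being closed and absolutely convex, it contains the $\ov{aco}$ of this set as well, and likewise the $-N$-summand lies in $[-|\mu|(F\cap A^c),|\mu|(F\cap A^c)]$. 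Feeding $F$ in place of $E$ into the $uss$ inclusion therefore yields
\[
|\rho(F)|\le d\bigl(|\lambda|(F\cap A)+|\mu|(F\cap A^c)\bigr)\qquad\mbox{for every }F\in\vS.
\]
Summing this bound over the blocks of an arbitrary finite $\vS$-partition of $E$ and using the finite additivity of the measures $|\lambda|$ and $|\mu|$, then passing to the supremum over all such partitions, gives $|\rho|(E)\le d\bigl(|\lambda|(E\cap A)+|\mu|(E\cap A^c)\bigr)$, i.e.\ the $usd$ estimate with $c=d$ and the same $A$.

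I expect the last step to be the only genuinely delicate point. The $uss$ hypothesis, being written in terms of support functions (which are merely sublinear, not additive), directly gives only a pointwise bound on the values $|\rho(F)|$; one has to upgrade this to a bound on the variation $|\rho|(E)$, and the partition argument does exactly this — it works precisely because the majorants $|\lambda|$ and $|\mu|$ are additive set functions although $s(x',\cdot)$ is not. Everything else is routine bookkeeping around the fixed set $A$ and the homogeneity of variations.
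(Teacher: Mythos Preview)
Your proof is correct and follows essentially the same route as the paper: trivial for $usd\Rightarrow usac$, a homogeneity/scaling argument for $usac\Rightarrow usd$, and for $uss\Rightarrow usd$ the observation that the closed absolutely convex hull of the range of a finite signed measure on $\vS_{F\cap A}$ sits in $[-|\lambda|(F\cap A),|\lambda|(F\cap A)]$. Two minor points where you are in fact slightly cleaner than the paper: you rescale the real coefficients $(\alpha,\beta)$ rather than the functionals $(x',y')$, which avoids any worry about the rescaled functionals leaving $B_{X'}$; and you make explicit the partition argument needed to upgrade the pointwise bound $|\rho(F)|\le d(|\lambda|(F\cap A)+|\mu|(F\cap A^c))$ to the variation bound $|\rho|(E)\le d(|\lambda|(E\cap A)+|\mu|(E\cap A^c))$, a step the paper elides with ``So the assertion follows.''
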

\begin{proof}  {\bf usd) $\Rightarrow$ usac) }  is obvious.\\
{\bf (usac)  $\Rightarrow$   (usd)}
Let  $A \in \vS$,  $\ve>0$ and $\delta>0$ be    such  that $|\alpha s(x',N)+\beta s(y',N)|(E)<\delta$ implies $|\alpha{s(x',M)}+\beta{s(y',M)}|(E)<\ve$.\\
If $|\alpha s(x',N)+\beta s(y',N)|(E)=0$, then $M\lll N$ yields
$|\alpha{s(x',M)}+\beta{s(y',M)}|(E)<\ve\,\,\mbox{for every } \ve>0\,. $
 So $usd$ follows.\\
 Suppose now that $|\alpha s(x',N)+\beta s(y',N)|(E)>0$.
Let
\begin{eqnarray*}
 \hat{x}:=\delta{x'}[2|\alpha s(x',N)+\beta s(y',N)|(E)]^{-1}
\quad
 \hat{y}:= \delta{y'}[2|\alpha s(x',N)+\beta s(y',N)|(E)]^{-1}
\end{eqnarray*}
Then
$|\alpha s(\hat{x},N)+\beta s(\hat{y},N)|(E)<\delta $
 and consequently
 $ |\alpha s(\hat{x},M)+\beta s(\hat{y},M)|(E) \leq \ve. $
It follows that, if we take $c= 2 \ve /\delta$, then
\begin{eqnarray*}
|\alpha s(x',M)+\beta s(y',M)|(E) \leq c |\alpha s(x',N)+\beta s(y',N)|(E)\,.
\end{eqnarray*}
In a similar way one obtains the required inequalities for very $E\in\vS_{A^c}$.
That proves $usd$.

{\bf (uss) $\Rightarrow$ (usd) }
 By definition, for every $F\in\vS_E $, we have
\begin{linenomath*}
\begin{eqnarray*}
 \alpha s(x',M (E) )&+&\beta s(y',M (E) )\leq\\  &\leq&
				\sup \, d\, \ov{aco}\{ [\alpha s(x',N(F\cap{A})) +\beta s(y',N(F\cap{A})) ] \} +\\ &+&
				\sup d\, \ov{aco}\{ [\alpha s(x',-N (F\cap{A^c}))  +\beta s(y',-N (F\cap{A^c}))] \}\, \leq \\
&\leq& {d}|\alpha{s(x',N)}+\beta{s(y',N)}|(E\cap{A})+
\\ &+& d|\alpha{s(x',-N)}+\beta{s(y',-N)}|(E\cap{A^c})\,.
 \end{eqnarray*}
 \end{linenomath*}
 So the assertion follows.
\end{proof}

\begin{thm}\label{t1}
Let $M,N :\vS\to{cb(X)}$  be two consistent multimeasures possessing control measures.  Then the following  are equivalent:
\begin{description}
\item[(RN$_b$)]
There exists a bounded measurable function  $\theta:\vO \to\R$ such that for every $E \in \vS$ we have
\begin{equation}\label{e3}
M(E)=\int_E\theta\,dN\,;
\end{equation}
\item[(\ref{t1}.i)]
$M$ is   uniformly scalarly dominated by $N$;
\item[(\ref{t1}.ii)]
 $M$ is uniformly scalarly absolutely continuous  with respect to $N$;
\item[(\ref{t1}.iii)]
$M$ is   uniformly scalarly subordinated to $N$.
\end{description}
\end{thm}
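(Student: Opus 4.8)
The plan is to use Proposition~\ref{+forte}, which already yields $(\ref{t1}.i)\Leftrightarrow(\ref{t1}.ii)$ and $(\ref{t1}.iii)\Rightarrow(\ref{t1}.i)$; it therefore suffices to prove $(\mathrm{RN}_b)\Rightarrow(\ref{t1}.iii)$ and $(\ref{t1}.i)\Rightarrow(\mathrm{RN}_b)$. The first implication is a direct estimate: take $A:=\{\theta\ge0\}$, write $\rho:=\alpha s(x',N)+\beta s(y',N)$ and $\sigma:=\alpha s(x',-N)+\beta s(y',-N)$ (finite signed measures, since $N$ is $cb(X)$-valued), and observe that the defining identity (\ref{e10}) gives $\alpha s(x',M(E))+\beta s(y',M(E))=\int_{E\cap A}\theta\,d\rho+\int_{E\cap A^c}(-\theta)\,d\sigma$, whose modulus is at most $\sup|\theta|\cdot\bigl(|\rho|(E\cap A)+|\sigma|(E\cap A^c)\bigr)$. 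Since $|\rho|(G)\le2\sup\{|\rho(F)|:F\in\vS_G\}$ for a signed measure, and since the $\ov{aco}$ of a bounded subset of $\R$ is the symmetric interval determined by its sup-modulus, this is exactly the radius of the set on the right of the $uss$ inclusion rescaled by $d:=2\sup|\theta|$; hence $(\ref{t1}.iii)$ holds.

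For $(\ref{t1}.i)\Rightarrow(\mathrm{RN}_b)$ I would first fix a common control measure. Since $usd$ forces $\mcN(N)\subset\mcN(M)$ (put $\alpha=1$, $\beta=0$), a control measure of $N$ — which exists by hypothesis and, by Theorem~\ref{T1}, may be taken equivalent to $N$ — is a common control measure $\mu$; moreover $N$ satisfies (ccc), so Lemma~\ref{l5} supplies a countable family $D\subset B_{X'}$, which I may assume symmetric ($D=-D$), with $\bigcap_{x'\in D}\mcN[s(x',N)]=\bigcap_{x'\in D}\mcN[s(x',-N)]=\mcN(N)=\mcN(\mu)$. For each $x'$ let $h_{x'},g_{x'}\in L^1(\mu)$ be the Radon-Nikod\'{y}m densities of $s(x',M)$ and $s(x',N)$ with respect to $\mu$ (so $s(x',-N)=s(-x',N)$ has density $g_{-x'}$). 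Reading $usd$ through these densities gives, for all $\alpha,\beta\in\R$ and $x',y'\in B_{X'}$, $|\alpha h_{x'}+\beta h_{y'}|\le c|\alpha g_{x'}+\beta g_{y'}|$ $\mu$-a.e.\ on $A$ and $|\alpha h_{x'}+\beta h_{y'}|\le c|\alpha g_{-x'}+\beta g_{-y'}|$ $\mu$-a.e.\ on $A^c$. The standard proportionality argument used in the vector case (cf.\ \cite{mu,mu20}) then yields $g_{y'}h_{x'}=g_{x'}h_{y'}$ $\mu$-a.e.\ on $A$ and $g_{-y'}h_{x'}=g_{-x'}h_{y'}$ $\mu$-a.e.\ on $A^c$ for all $x',y'$; combined with the determining property of $D$ this produces measurable functions $\theta_A$ on $A$ and $\theta_{A^c}$ on $A^c$, both bounded by $c$, such that $h_{x'}=\theta_A g_{x'}$ $\mu$-a.e.\ on $A$ and $h_{x'}=\theta_{A^c}g_{-x'}$ $\mu$-a.e.\ on $A^c$ for every $x'\in X'$ (first on $D$, then for all $x'$ by pairing with elements of $D$). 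Set $\theta:=\theta_A$ on $A$ and $\theta:=-\theta_{A^c}$ on $A^c$; then $|\theta|\le c$.

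It remains to identify $\int_E\theta\,dN$ with $M(E)$, and here the pointless part enters. Let $H$ witness the consistency of $M$ and $N$, so that $M,N$ are pointless on $H$ and vector measures on $H^c$. On $A\cap H$ one has $s(x',M(E))=\int_E\theta_A\,ds(x',N)$ for all $E\in\vS_{A\cap H}$ and $x'\in X'$, and $N|_{A\cap H}$ is pointless, so Proposition~\ref{l-pos} forces $\theta_A\ge0$ $\mu$-a.e.\ on $A\cap H$; applying it to the pointless multimeasure $-N$ on $A^c\cap H$ gives $\theta_{A^c}\ge0$, i.e.\ $\theta\le0$, $\mu$-a.e.\ on $A^c\cap H$; on $H^c$ we have $s(x',-N)=-s(x',N)$, hence $g_{-x'}=-g_{x'}$ $\mu$-a.e.\ there. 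Examining the four pieces $A\cap H$ (where $\theta^-=0$), $A^c\cap H$ (where $\theta^+=0$), and $A\cap H^c$, $A^c\cap H^c$ (where $g_{-x'}=-g_{x'}$) separately, one gets in each case $h_{x'}=\theta^+g_{x'}+\theta^-g_{-x'}$ $\mu$-a.e., so that for all $x'\in X'$ and $E\in\vS$
\[
s(x',M(E))=\int_E\theta^+\,ds(x',N)+\int_E\theta^-\,ds(x',-N),
\]
the right-hand side being finite because $\theta$ is bounded and $s(x',\pm N)$ finite. By Definition~\ref{defiN} (its second, equivalent form) with $M_\theta(E):=M(E)\in cb(X)$, the bounded function $\theta$ is $BDS_m$-integrable with respect to $N$ in $cb(X)$ and $\int_E\theta\,dN=M(E)$ for every $E$, which is $(\mathrm{RN}_b)$.

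The main obstacle is the middle step: manufacturing a single bounded measurable $\theta$ out of the family of densities $h_{x'},g_{x'}$. The proportionality argument is measure-theoretically delicate because all its a.e.-statements depend on $(\alpha,\beta,x',y')$, of which there are uncountably many, so one must first descend to the countable family of Lemma~\ref{l5} — and this is precisely the point where the existence of a control measure, equivalently (ccc), is genuinely used. The feature that distinguishes the argument from the Bartle-Dunford-Schwartz Radon-Nikod\'{y}m theorem for vector measures is that on the pointless part $x'\mapsto s(x',M(E))$ is only sublinear, so $\theta$ cannot be taken non-negative globally; Proposition~\ref{l-pos} locates its sign exactly on $A\cap H$ and on $A^c\cap H$, and that sign pattern is exactly what makes $\int_E\theta^+\,ds(x',N)+\int_E\theta^-\,ds(x',-N)$ reproduce $s(x',M(E))$.
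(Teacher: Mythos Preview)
Your proposal is correct and follows essentially the same route as the paper: Proposition~\ref{+forte} for the equivalences among (i)--(iii), densities with respect to a common control measure, the proportionality argument to produce a single $\theta$, and Proposition~\ref{l-pos} to pin down the sign of $\theta$ on the pointless part. The only organizational differences are that the paper splits into the pointless and vector parts \emph{before} building $\theta$ (treating $H^c$ by a direct appeal to \cite[Theorem~1]{mu} via Remark~\ref{r1}) rather than after, and that for the single-function step the paper invokes \cite[Theorem~7.35.2]{za} (equivalently \cite[Lemma]{mu}) in place of your explicit countable-determining-family argument via Lemma~\ref{l5}; these are the same mechanism packaged differently.
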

\begin{proof}
Let $\mu$ be a finite control measure for $N$. Without loss of generality, we may assume that $\mu$ is also a control measure for $M$.
  Let $H \in \Sigma$ be a set as in the definition of the consistency for $M$ and $N$.
 We divide the proof into the pointless and vector parts.
 \begin{description}
 \item[ (Pointless part)]
 We assume for simplicity that $H=\vO$.
 \begin{description}
\item[(\ref{t1}.i) $\Rightarrow$ (RN$_b$)]
By  the classical Radon-Nikod\'{y}m theorem, for every $x' \in X'$,
  there exist two measurable   real functions  $f_{x'}$ and  $g_{x'}$ such that, $\forall\;E\in\vS,\;$
\begin{equation}\label{e5}
 s(x',M(E))=\int_Ef_{x'}\,d\mu\quad\mbox{and}\quad  s(x',N(E))=\int_Eg_{x'}\,d\mu\,.
\end{equation}
\begin{itemize}
\item  Let $A$  be the set satisfying  the definition of     uniform scalar domination of $M$ with respect to $N$.
 If $E\in \vS_A$ and  $\alpha , \beta \in \mathbb{R}$, then
\begin{linenomath*}
\begin{eqnarray*}
\int_E|\alpha{f_{x'}}+\beta{f_{y'}}|\,d\mu&=&
|\alpha{s(x',M)}+\beta{s(y',M)}|(E) \\
&\leq & {c}|\alpha{s(x',N)}+\beta{s(y',N)}|(E) \\
&=& c\int_E|\alpha{g_{x'}}+\beta{g_{y'}}|\,d\mu.
\end{eqnarray*}
\end{linenomath*}
So
$|\alpha{f_{x'}}+\beta{f_{y'}}|\leq c|\alpha{g_{x'}}+\beta{g_{y'}}|\quad\mu-\mbox{a.e. on} \;A.$
Hence   (see also  \cite[Lemma]{mu}),

$$\frac{f_{x'}}{cg_{x'}}=\frac{f_{y'}}{cg_{y'}}\quad\mu-\mbox{a.e. on the set} \quad\{\omega\in\vO:g_{x'}(\omega)g_{y'}(\omega)\neq 0\}\cap{A}\,.$$

According to \cite[Theorem 7.35.2]{za} (see also \cite[Lemma]{mu} for a short proof) there exists a  measurable
  $\theta_1:A\to [-1,1]$   such that $f_{x'}=c\, \theta_1\,g_{x'}\;\mu$-a.e.
 on the set $\{\omega \in A: g_{x'}(\omega)g_{y'}(\omega)\neq 0\}$ for each $x' \in X'$ separately. The equality on the set
 $\{\omega \in A: g_{x'}(\omega)= 0\}$ is obvious (notice that $|f_{x'}|\leq c|g_{x'}|$ a.e. on $A$).
So	it follows that

\begin{equation*}
s(x', M(E))= \int_E c\, \theta_1\,ds(x', N)\,
\end{equation*}

	for each $E\in\vS_A$ and $x' \in X'$.
We can prove that $\theta_1 \geq 0$ $N$-a.e. on $A$, thanks to Proposition \ref{l-pos}.
\item Let now $F \in \vS_{A^c}$.
For every $\alpha , \beta \in \mathbb{R}$ and for every $x', y' \in X'$ we have
\begin{linenomath*}
\begin{eqnarray*}
\int_F |\alpha{f_{x'}}+\beta{f_{y'}}|\,d\mu&=&
|\alpha{s(x',M)}+\beta{s(y',M)}|(F) \\
&\leq & {c}|\alpha{s(x',-N)}+\beta{s(y',-N)}|(F) \\ &=&
 c\int_F |\alpha{g_{-x'}}+\beta{g_{-y'}}|\,d\mu.
\end{eqnarray*}
\end{linenomath*}
A similar calculation  gives the equality

$$\frac{f_{x'}}{cg_{-x'}}=\frac{f_{y'}}{cg_{-y'}}\quad\mu-\mbox{a.e. on the set} \quad\{\omega\in A^c :g_{-x'}(\omega)g_{-y'}(\omega)\neq 0\}$$

and then

\begin{equation*}
s(x', M(F))= \int_F c\, \theta_2\,ds(x', -N)\,.
\end{equation*}

   As before, $\theta_2:A^c\to\R$ is non-negative $N$-a.e.. Thus, set $\theta=c\theta_1-c\theta_2$, for every $E \in \vS$
\begin{linenomath*}
\begin{eqnarray*}
s(x',M(E)) &=& s(x',M(E \cap A)) + s(x',M(E \cap A^c)) =\\
&=& \int_E \theta^+\,ds(x', N)+\int_E \theta^-\,ds(x', -N)\,,
\end{eqnarray*}
\end{linenomath*}
what means that $\theta$ is a Radon-Nikod\'{y}m derivative of $M$ with respect to $N$.
\end{itemize}
\item[( RN$_b$) $\Rightarrow$ (\ref{t1}.iii)]
We set
 $A:=\{ \omega \in \vO: \theta(\omega) \geq0\}$,  and $d > \sup_{\omega \in \vO}|\theta(\omega)|$.
Since $\theta$ is
$BDS_m$-integrable with respect to $N$, by Definition \ref{defiN} we have for each $E\in\vS$ the equality
\begin{eqnarray*}
\alpha s(x',M(E\cap{A}))+\beta s(y',M(E\cap{A}))
&=&\int_{E\cap{A}}\theta^+\,d[\alpha s(x',N)+\beta s(y',N)]\,.
\end{eqnarray*}
Let $x',y',\alpha,\beta$ be fixed and let $B,C$ generate the Hahn decomposition of
$\alpha s(x',N)+\beta s(y',N)$. Then,
\begin{linenomath*}
\begin{eqnarray*}
&&\alpha s(x',M(E\cap{A\cap{B}}))+\beta s(y',M(E\cap{A\cap{B}}))\\
&\in& \ov{co}\, [\theta^+(E\cap{A}\cap{B})] \cdot [\alpha s(x',N(E\cap{A\cap{B}}))+\beta s(y',N(E\cap{A\cap{B}}))]
\end{eqnarray*}
\end{linenomath*}
Similarly,
\begin{linenomath*}
\begin{eqnarray*}
&&\alpha s(x',M(E\cap{A\cap{C}}))+\beta s(y',M(E\cap{A\cap{C}}))\\
&\in& \ov{co}\, [\theta^+(E\cap{A}\cap{C})]\cdot [\alpha s(x',N(E\cap{A\cap{C}}))+\beta s(y',N(E\cap{A\cap{C}}))]
\end{eqnarray*}
\end{linenomath*}
All together yields
\begin{linenomath*}
\begin{eqnarray*}
&&\alpha s(x',M(E\cap{A}))+\beta s(y',M(E\cap{A}))\\
&\in& \ov{co}\, [\theta^+(E\cap{A})] \cdot \Big( [\alpha s(x',N(E\cap{A\cap{B}}))+\beta s(y',N(E\cap{A\cap{B}}))] \\
&+&[\alpha s(x',N(E\cap{A\cap{C}}))+\beta s(y',N(E\cap{A\cap{C}}))] \Big)\\
&=& \ov{co}\, [\theta^+(E\cap{A})] \cdot [\alpha s(x',N(E\cap{A}))+\beta s(y',N(E\cap{A}))]\\
&\subset& d\,\ov{aco}\, \{  \alpha s(x',N(F\cap{A})) +\beta s(y',N(F\cap{A}))  \colon F\in\vS_E \}.
\end{eqnarray*}
\end{linenomath*}
In the same way one obtains the inclusion
\begin{linenomath*}
\begin{eqnarray*}
&&\alpha s(x',M(E\cap{A^c}))+\beta s(y',M(E\cap{A^c}))\\
&\subset& d\,\ov{aco}\, \{  \alpha s(x',-N(F\cap{A^c})) +\beta s(y',-N(F\cap{A^c})) \colon F\in\vS_E \}.
\end{eqnarray*}
\end{linenomath*}
 The remaining equivalences follow from Proposition \ref{+forte}.
\end{description}
\item[ (Vector part)]
  We assume now that $H^c=\vO$. In virtue of  Remark \ref{r1},  the equivalence of the four conditions is a consequence of \cite[Theorem 1]{mu}.
  \end{description}
\end{proof}

Similarly to the vector case   we say that a multimeasure $M:\vS\to{cb(X)}$ has  \textit{ locally} a property  with respect to a multimeasure $N:\vS\to{cb(X)}$,
  if for each $E\in\vS\smi{\mathcal N}(N)$ there exists $F\subset{E}$ with $F\in\vS\smi{\mathcal N}(N)$  such that $M$ has the property
   with respect to $N$ on the set $F$. \\

 Using the local properties we obtain:
\begin{thm}\label{t2}
Let $M,N :\vS\to{c(X)}$  be two $\sigma$-bounded consistent multimeasures possessing control measures.   Then the following  are equivalent:
\begin{description}
\item[(RN)]
There exists a measurable function  $\theta:\vO\to \R$ such that
$$\forall\;E\in\vS\qquad M(E)=\int_E\theta\,dN\,; $$
\item[(\ref{t2}.i)]
$M$ is   locally uniformly scalarly dominated by $N$;
\item[(\ref{t2}.ii)]  $M$ is locally uniformly scalarly absolutely continuous  with respect to $N$;
\item[(\ref{t2}.iii)]
$M$ is locally uniformly scalarly subordinated to $N$.
\end{description}
\end{thm}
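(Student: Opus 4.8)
The plan is to reduce Theorem~\ref{t2} to Theorem~\ref{t1} by a localization and exhaustion argument. Since $N$ possesses a control measure, Theorem~\ref{T1} gives that $N$ satisfies (ccc) and that we may fix a finite control measure $\mu\sim N$; in particular $\mcN(N)=\mcN(\mu)$ is a $\sigma$-ideal and $s(x',\pm N)\ll\mu$ for every $x'\in X'$. Enlarging $\mu$ if necessary (as in the proof of Theorem~\ref{t1}) we also take $\mu$ to be a control measure for $M$. Using $\sigma$-boundedness of $M$ and of $N$, fix a disjoint sequence $(\vO_n)_n$ with $\vO\smi\bigcup_n\vO_n\in\mcN(N)$ such that both $M$ and $N$ are $cb(X)$-valued on each $\vS_{\vO_n}$.

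For (RN)$\Rightarrow$(\ref{t2}.iii): let $\theta$ be a derivative of $M$ with respect to $N$ and take $E\in\vS\smi\mcN(N)$. Choose $n$ with $E\cap\vO_n\notin\mcN(N)$ and then $k\in\N$ with $F:=E\cap\vO_n\cap\{|\theta|\le k\}\notin\mcN(N)$; this is possible because $s(x',N)\ll\mu$ and $\{|\theta|\le k\}\uparrow\vO$, so $N(E\cap\vO_n\cap\{|\theta|\le k\})$ tends scalarly to $N(E\cap\vO_n)\neq\{0\}$. On $\vS_F$ the multimeasures $M|_F,N|_F$ are $cb(X)$-valued, consistent, admit the control measure $\mu|_F$, and satisfy $M(E')=\dint_{E'}\theta|_F\,d(N|_F)$ for every $E'\in\vS_F$ with $\theta|_F$ bounded; hence Theorem~\ref{t1} applies and $M|_F$ is $uss$ by $N|_F$. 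Thus $M$ is locally $uss$ with respect to $N$, and applying Proposition~\ref{+forte} on each such $F$ also yields local $usd$ and local $usac$, so (RN) implies each of (\ref{t2}.i)--(\ref{t2}.iii). Conversely, Proposition~\ref{+forte} applied on the sets furnished by the relevant definition shows that local $uss$ and local $usac$ each imply local $usd$; so the only implication left is (\ref{t2}.i)$\Rightarrow$(RN).

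For (\ref{t2}.i)$\Rightarrow$(RN) I would run an exhaustion. Let $\mcF$ be the family of all $F\in\vS\smi\mcN(N)$ such that $M|_F,N|_F$ are $cb(X)$-valued and consistent, admit a control measure, and $M|_F$ is $usd$ by $N|_F$; for each $F\in\mcF$, Theorem~\ref{t1} produces a bounded measurable $\theta_F\colon F\to\R$ with $M(E')=\dint_{E'}\theta_F\,dN$ for all $E'\in\vS_F$. Local $usd$ together with the sequence $(\vO_n)_n$ shows that every $E\in\vS\smi\mcN(N)$ contains a member of $\mcF$ (shrink the set given by the definition of local $usd$ into a suitable $\vO_n$, which preserves $usd$). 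By the Kuratowski--Zorn lemma pick a maximal pairwise disjoint subfamily $\{F_j\}_j\subseteq\mcF$; by (ccc) it is countable, and by maximality $\vO_0:=\vO\smi\bigcup_jF_j\in\mcN(N)\subseteq\mcN(M)$, the last inclusion using $M\ll N$ (which is forced if (RN) is to hold, and which, as in the vector case \cite{mu}, is the part of the hypotheses the local conditions do not by themselves supply). Put $\theta:=\sum_j\theta_{F_j}\chi_{F_j}$ and $\theta:=0$ on $\vO_0$; $\theta$ is measurable. For $E\in\vS$ and $x'\in X'$, since $s(x',\pm N)$ vanishes on $\vS_{\vO_0}$ and $s(x',M(\cdot))$ is a $\sigma$-finite measure, countable additivity over the pieces $E\cap F_j$ together with the identities from Theorem~\ref{t1} gives
\[
s(x',M(E))=\int_E\theta^+\,ds(x',N)+\int_E\theta^-\,ds(x',-N),
\]
which is exactly $M(E)=\dint_E\theta\,dN$ in the sense of Definition~\ref{defiN}. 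The equivalences then close as (\ref{t2}.i)$\Rightarrow$(RN)$\Rightarrow$(\ref{t2}.iii)$\Rightarrow$(\ref{t2}.i), with (\ref{t2}.ii) inserted through Proposition~\ref{+forte}.

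The step I expect to be delicate is the patching of the $\theta_{F_j}$ into a single $BDS_m$-integrable $\theta$: one must check that the right-hand side above always makes sense in $(-\infty,+\infty]$ (no ``$\infty-\infty$''), which rests on each summand $\dint_{E\cap F_j}\theta_{F_j}^{\pm}\,ds(x',\pm N)$ being finite (because $N|_{F_j}$ is $cb(X)$-valued and $\theta_{F_j}$ bounded) and on $\sigma$-finiteness of $s(x',M)$, and that the resulting support function is realized by a member of $c(X)$ (namely $M(E)$ itself). A secondary point needing care is the $N$-null remainder $\vO_0$: it is harmless precisely because $\mu\sim N$ makes $\mcN(N)$ hereditary and because $M\ll N$, so one should be explicit that the representation cannot hold without that absolute-continuity hypothesis.
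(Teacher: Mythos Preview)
Your proposal is correct and follows essentially the same route as the paper: reduce to Theorem~\ref{t1} on pieces, exhaust $\vO$ by such pieces, and patch the local derivatives. The only mechanical difference is in the exhaustion: the paper organizes it by increasing domination constants $c=1,2,\ldots$ and takes least upper bounds in the measure algebra $\vS/\mcN(\mu)$ to produce the sequence $(E_n)_n$, whereas you use a single maximal disjoint family via Zorn and (ccc); both devices yield a countable covering up to an $N$-null set and are interchangeable here.

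On the two points you flag as delicate: the paper handles the ``no $\infty-\infty$'' issue explicitly by passing to the Hahn decompositions of $s(x',N)$ and $s(x',-N)$ (so that the four resulting series have terms of constant sign and are either absolutely convergent or diverge to $+\infty$), which is exactly the verification your sketch defers; and your observation about $M\ll N$ is well taken---the paper's ``without loss of generality we may assume that the sets cover all $\vO$'' quietly uses that the $N$-null remainder is also $M$-null, which is not supplied by the local conditions alone and, as you note, must be read into the hypotheses just as in the vector case~\cite{mu}.
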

\begin{proof} Assume that $\vO=\bigcup_n\vO_n\cup{B}$, where $\vO_n$-s are pairwise disjoint,  $B\in\mcN(\mu)$  and the restriction of $M$ and $N$ to
each  $\vO_n$ is $cb(X)$-valued. First  we assume that $M,N$  are $cb(X)$-valued on $\vS$. \\
Analogously to Theorem \ref{t1} we have to divide the proof into the pointless and vector parts. The vector part follows immediately from \cite[Theorem 2]{mu},
so we prove here only the pointless part.
\begin{itemize}
\item The equivalences {\bf (\ref{t2}.i)} $\Leftrightarrow$ {\bf (\ref{t2}.ii)} $\Leftrightarrow$ {\bf (\ref{t2}.iii)} follow from the corresponding equivalences
  in Theorem \ref{t1}.
\item Also   {\bf (RN)} $\Rightarrow$ {\bf (\ref{t2}.i)}  is obvious (it is enough to take  for each $E\in\vS\smi{\mathcal N}(N)$ a set  $F\subset{E}$ such that
 $F\in\vS\smi{\mathcal N}(N)$ and $\theta$ is bounded on $F$).
\item
Now we are going to prove that {\bf (\ref{t2}.i)} $ \Rightarrow $ {\bf (RN)}. \\
Let    $\mu$ be a control measure  for $N$.
By definition there exist $E \in \vS \setminus \mathcal{N}(\mu)$  such that $M$ is scalarly dominated on $E$ by $N$. We denote by
\begin{linenomath*}
\begin{eqnarray*}
\mathcal{H}_1&:=&\{E\in\vS \smi{\mathcal N}(\mu) \colon \exists \;A_E\in\vS_E\quad
\forall\;F\in\vS_E, \,\forall\;x',y'\in{X'}, \; \forall\;\alpha,\beta\in\R, \\
&&|\alpha{s(x',M)}+\beta{s(y',M)}|(F)\\
&\leq&
|\alpha{s(x',N)}+\beta{s(y',N)}|(F\cap{A_E})+|\alpha{s(x',-N)}+\beta{s(y',-N)}|(F\smi{A_E})\}
\,.
\end{eqnarray*}
\end{linenomath*}
By the completeness of the algebra $\vS / {\mathcal N}(\mu)$ there is $E_1 \in \vS$ such that its equivalence class is the least upper bound of
$\mathcal{H}_1$ in $\vS /{\mathcal N}(\mu)$.
If $\mathcal{H}_1$ is empty we choose $E_1 = \emptyset$.
 \\
Then we consider   the class $\mathcal{H}_2$ of all sets $E\in\vS_{\vO \setminus E_1} \smi{\mathcal N}(\mu) $ in which we have the
   scalar domination with respect to  $c=2$ and we choose analogously $E_2$. \\
After $n$ steps we have already sets  in this way $E_1,\ldots,E_n$ and $A_{E_1}\subset{E_1},\ldots,A_{E_n}\subset{E_n}$ such that for each
 $k \leq n$ the inequality
\begin{linenomath*}
\begin{eqnarray*}
&&|\alpha{s(x',M)}+\beta{s(y',M)}|(F)\\
&\leq&
k \left\{
|\alpha{s(x',N)}+\beta{s(y',N)}|(F\cap{A_{E_k}})+
|\alpha{s(y',-N)}+\beta{s(x',-N)}|(F\smi{A_{E_k}}) \right\}
\end{eqnarray*}
\end{linenomath*}
 holds true for all $F \in \vS_{E_k}$, for all $\alpha,\beta\in\R$ and for all $x',y'\in{X'}$.  Then we construct $E_{n+1}$ such that its
 equivalence class is the least upper bound of
\begin{linenomath*}
\begin{eqnarray*}
\mathcal{H}_{n+1} &:=& \{E\in\vS_{\vO \smi \bigcup_{k=1}^n E_k}
\smi{\mathcal N}(\lambda)
 \colon \exists \;A_E\in\vS_E\;
 \forall\;F\in\vS_E, \forall\;x',y'\in{X'},\;\forall\;\alpha,\beta\in\R \\
&&|\alpha{s(x',M)}+\beta{s(y',M)}|(F)
\leq (n+1)|\alpha{s(x',N)}+\beta{s(y',N)}|(F\cap{A_E})\\
&+&(n+1)|\alpha{s(x',-N)}+\beta{s(y',-N)}|(F\smi{A_E})\}\,.
\end{eqnarray*}
\end{linenomath*}
in $\vS / {\mathcal N}(\mu)$. A few first sets $E_k$ may be of measure zero  but then we meet the first set $E_k$ of positive measure.
 In this way  we obtain  a sequence (possibly finite) $(E_n)$  of sets with $E_n \in \mathcal{H}_{n}$.\\
 Without loss of generality we may assume that the sets  cover all $\vO$.
  Now  we  apply Theorem \ref{t1}  to each set $E_n$ and we get a bounded measurable   function $\theta_n$ such that
\begin{equation}\label{e8}
\forall\;E\in\vS_{ E_n},\;  \forall\;x'\in{X'},\quad
 s(x',M(E))=\int_E\theta_n^+\,d s(x',N)+\int_E\theta_n^-\,d s(x',-N)\,.
\end{equation}
Let  $\theta=\sum_{n=1}^{\infty} \theta_n \chi_{E_n}$ 
and let $x'\in{X'}$ be fixed.
 Then,  $\theta^+=\sum_{n=1}^{\infty} \theta_n^+ \chi_{E_n}$ and  $\theta^-=\sum_{n=1}^{\infty} \theta_n^- \chi_{E_n}$.
By (\ref{e8}) we have for every $E \in \vS$
\begin{linenomath*}
\begin{eqnarray}\label{serie}
\nonumber
s(x',M(E)) &=& \sum_{n=1}^{\infty} s(x',M(E \cap E_n)) = \\&=&
\sum_{n=1}^{\infty} \left(
\int_{E \cap E_n} \theta_n^+\,d s(x',N)+\int_{E \cap E_n} \theta_n^-\,d s(x',-N) \right).
\end{eqnarray}
\end{linenomath*}
Let $\vO^+ :=\{\omega\in \vO: \theta(\omega) \geq 0\},\, \vO^-:= \vO \setminus \vO^+$.
Let $P'_1, P'_2$, $Q'_1,Q'_2 \in \vS$ be two Hahn decomposition for $s(x',N), s(x',-N)$ respectively.
We have
\begin{linenomath*}
\begin{eqnarray*}
s(x',M(E \cap P_1' \cap \vO^+) )&=&
\sum_{n=1}^{\infty} s(x',M(E \cap E_n  \cap P_1' \cap \vO^+)) = \\
&=& \sum_{n=1}^{\infty}
\int_{E \cap E_n \cap P_1' \cap \vO^+} \theta_n^+\,d s(x',N)
\\
s(x',M(E \cap P_2' \cap \vO^+) )&=&
\sum_{n=1}^{\infty} s(x',M(E \cap E_n  \cap P_2' \cap \vO^+)) =
\\ &=&
\sum_{n=1}^{\infty}
\int_{E \cap E_n \cap P_2' \cap \vO^+} \theta_n^+\,d s(x',N)
\end{eqnarray*}
\end{linenomath*}
\begin{linenomath*}
\begin{eqnarray*}
s(x',M(E \cap Q_1' \cap \vO^- ))&=&
\sum_{n=1}^{\infty} s(x',M(E \cap E_n  \cap Q_1' \cap \vO^-)) =
\\ &=& \sum_{n=1}^{\infty}
\int_{E \cap E_n \cap P_1' \cap \vO^-} \theta_n^-\,d s(x',-N) \\
s(x',M(E \cap Q_2' \cap \vO^-))&=&
\sum_{n=1}^{\infty} s(x',M(E \cap E_n  \cap Q_2' \cap \vO^-)) =
\\ &=& \sum_{n=1}^{\infty}
\int_{E \cap E_n \cap P_2' \cap \vO^-} \theta_n^-\,d s(x',-N).
\end{eqnarray*}
\end{linenomath*}
Each of the series closing the above equalities is convergent with all its terms of the same sign. Hence, they are absolutely convergent. It follows that
  for all $E\in\vS$
 \begin{linenomath*}
\begin{eqnarray*}
 s(x',M(E))&=& \sum_{n=1}^{\infty}s(x',M(E\cap E_n)) =
 \int_{E \cap P_1'}\theta^+\,ds(x',N) +
 \int_{E \cap P_2'}\theta^+\,ds(x',N)+\\
 &+&
 \int_{E \cap Q_1'} \theta^-\,ds(x',-N) +
 \int_{E \cap Q_2'} \theta^-\,ds(x',-N) = \\
 &=&
\int_{E}\theta^+\,ds(x',N) +  \int_{E} \theta^-\,ds(x',-N) .
\end{eqnarray*}
\end{linenomath*}
\end{itemize}

Let us consider now the general case. We know already that for each $n\in\N$ there exists a measurable function $\xi_n:\vO_n\to\R$ such that
 \begin{equation}\label{e30}
\forall\;E\in\vS_{ \vO_n}\;  \forall\;x'\in{X'}\; s(x',M(E))=\int_E\xi_n^+\,d s(x',N)+\int_E\xi_n^-\,d s(x',-N)\,.
\end{equation}
Then, we follow the proof presented after formula (\ref{e8}). We have to remember only that we have always $s(x',M(E))>-\infty$ and so each series appearing
in the proof is either divergent to $+\infty$ or convergent.
\end{proof}


\begin{rem}\label{r3} \rm It follows from Lemma \ref{L1} that some introductory assumptions in Theorem \ref{t1} concerning $M$ and $N$
 are necessary. If $M$ is pointless but $N$ is not, then $M$ cannot be represented by a $BDS_m$-integral with respect to $N$.
To construct an example let $X$ be a quasi-complete locally convex space and $N:=\nu:\vS\to{X}$ be a non-atomic vector measure.\\

 If $M$ is defined by the formula  $M(E):=\ov{co}\,\mcR(\nu_E)$ (or $M(E):=\ov{aco}\,\mcR(\nu_E)$), then $\ov{aco}\,\mcR(\nu_E)$ is
 a weakly compact set (see \cite[Theorem IV.6.1]{kk}) and the conditions  $uss$,  $usd$ and $usac$ are fulfilled by $M$ and $N$.
 Indeed the formula (\ref{sub-uss}) in  Remark \ref{r1}, shows that
$$\alpha{s(x',M(E))}+\beta{s(x',M(E))}\in d\,\ov{aco}
\{\lg{x',\mcR(\nu_E)}\rg\}
\,,$$
whenever $E\in\vS$ and $x'\in{X'}$ are arbitrary. It remains to prove that
$$M(E):=\ov{co}\, \mcR(\nu_E)$$
is a multimeasure. $M$ is clearly finitely additive: if $A,B$ are disjoint, then
$$
M(A \cup B) =\ov{co} \,\mcR(\nu_{A \cup B}) = \ov{co} \, (\mcR(\nu_A) + \mcR(\nu_B)) =
 \ov{co} \, \mcR(\nu_A) \oplus  \ov{co} \, \mcR(\nu_B)= M(A) \oplus M(B).
 $$
It follows that for every $x' \in X'$, $s(x',M) : \vS \to \mathbb{R}$ is finitely additive. \cite[Proposition 3.8]{bm} yields the countable additivity of
each $s(x',M)$ and so $M$ is a multimeasure.
\end{rem}

\begin{rem}\label{r30}\rm A particular example of a quasi-complete locally convex space is a conjugate Banach space endowed with the weak$^*$ topology
 $\sigma(X',X)$. Let $cw^*k(X')$ denote the family of all non-empty weak$^*$-compact and convex subsets of $X'$. $M:\vS\to cw^*k(X')$ is
  called a {\it weak$^*$-multimeasure} if $s(x,M(\cdot))$ is a measure, for every $x\in{X}$. It is proved in \cite[Theorem 3.4]{munew} that  $X$ does not
  contain any isomorphic copy of $\ell^{\infty}$ if and only if each weak$^*$-multimeasure $M:\vS\to cw^*k(X')$ is a $d_H$-multimeasure.
The Radon-Nikod\'{y}m theorem for two weak$^*$-multimeasures formulates exactly as Theorem \ref{t2}, one should only remember that now the conjugate
 to $(X',\sigma(X',X))$ is the space $X$ itself.
\end{rem}

\section{$d_H$-multimeasures and their   R{\aa}dstr\"{o}m embeddings}\label{s-dH}
Throughout this section we assume that $X$ is a Banach space and we consider now the case of $d_H$-multimeasures.
Let us recall  that if a Banach space $X$ does not contain any isomorphic copy of $c_0$, then each $cb(X)$-valued multimeasure is a $d_H$-multimeasure (see \cite{jca2020,cdpms2018}).
  Examples of $cb(c_0)$-valued measures that are not $d_H$-measures can be found in \cite[Example 3.6, Example 3.8]{mu21}.
When we write $\sum_{m=1}^na_mx_m'\in{span}{B_{X'}}$, 
we always mean that $x_m'\in{B_{X'}}$ and $a_m\in{\mathbb R}$, for every $m\leq{n}$.\\

Let now $M,N:\vS\to{cb(X)}$  be two $d_H$-multimeasures. We remember that
 $\| s ( \cdot, M(E))  \|_{\infty} = \sup_{x' \in B_{X'}} |s(x',M(E))|$ and,  according to  (\ref{eq:Y}),
  we set $Y := \ell_{\infty} (B_{X'}) $.
By \cite[Theorem 1]{mu}, for $\nu:=j \circ M, \, \kappa= j \circ N$ we have
\begin{thm}\label{t3}
	If $M,N:\vS\to{cb(X)}$  are two $d_H$-multimeasures, then the following  are equivalent:
	\begin{description}
	\item[RN$_j$)] \label{alp}
			There exists a bounded measurable function (measurable function) $\theta:\vO\to\R$ such that for all $E\in\vS$ and  $y'\in \ell_{\infty}'(B_{X'})$		
		$$ \langle{y',j\circ{M}(E)}\rangle=\int_E\theta\,d\lg{y',j\circ{N}}\rg\,;$$		
		\item[(\ref{t3}.i)]		
		 $j\circ{M}$ is (locally) uniformly scalarly absolutely continuous  with respect to $j\circ{N}$;
		\item[(\ref{t3}.ii)]
		$j\circ{M}$ is (locally) uniformly scalarly dominated by $j\circ{N}$\,;
			\item[(\ref{t3}.iii)]
		$j\circ{M}$ is (locally) subordinated to $j\circ{N}$.
	\end{description}
\end{thm}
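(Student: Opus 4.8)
The plan is to reduce the statement to the vector-valued Radon-Nikod\'{y}m theorem of Musia{\l} \cite{mu}, applied to the R{\aa}dstr\"{o}m embeddings of $M$ and $N$. First I would record that $Y=\ell_{\infty}(B_{X'})$ is a Banach space and that $\nu:=j\circ{M}$ and $\kappa:=j\circ{N}$ are countably additive $Y$-valued vector measures possessing control measures. Since $M(E),N(E)\in cb(X)$, the support functions $s(\cdot,M(E))$ and $s(\cdot,N(E))$ are bounded on $B_{X'}$, so $\nu$ and $\kappa$ are well defined $Y$-valued set functions; by property \ref{s-prel}.a) the map $j$ is additive for the closure of the Minkowski addition, and by property \ref{s-prel}.b) it is an isometry between $(cb(X),d_H)$ and $(j(cb(X)),\|\cdot\|_{\infty})$. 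Hence the $d_H$-countable additivity of $M$ (resp.\ of $N$) translates immediately into norm-countable additivity of $\nu$ (resp.\ of $\kappa$), and the existence of control measures follows either from the classical Bartle-Dunford-Schwartz theorem applied to $\nu,\kappa$ or from the fact, recalled in Section \ref{s-prel}, that every $d_H$-multimeasure admits a control measure.

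Next I would simply invoke Musia{\l}'s theorems with $\nu=j\circ{M}$ and $\kappa=j\circ{N}$, whose conjugate space is $Y'=\ell_{\infty}'(B_{X'})$: \cite[Theorem 1]{mu} for the bounded derivative together with the non-local (uniform) forms of the three conditions, and \cite[Theorem 2]{mu} for the general measurable derivative together with the local forms. All that then remains is to match the vocabulary. The conditions (\ref{t3}.i), (\ref{t3}.ii) and (\ref{t3}.iii) are, verbatim, the uniform scalar absolute continuity, the uniform scalar domination and the subordination of the vector measure $j\circ{M}$ with respect to $j\circ{N}$ in the sense of Section \ref{s-arbitrary} --- with the parenthetical \emph{locally} corresponding to the local properties, hence to \cite[Theorem 2]{mu} --- while condition RN$_j$) is exactly the Lewis functional form (Definition \ref{def21}) of the statement that $\theta$ is the Bartle-Dunford-Schwartz derivative of $j\circ{M}$ with respect to $j\circ{N}$, i.e.\ $j\circ{M}(E)=(BDS)\int_E\theta\,d(j\circ{N})$ for all $E\in\vS$. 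Thus the four equivalent statements are precisely the conclusion of those theorems.

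There is essentially no genuine obstacle here: once one observes that a $d_H$-multimeasure becomes an ordinary (norm countably additive) vector measure after the R{\aa}dstr\"{o}m embedding, no idea beyond the vector-valued theory is required, and checking that the integral in RN$_j$) coincides with the Lewis integral and that the ``uniform scalar'' terminology specializes correctly is routine bookkeeping. The point worth emphasizing --- and the reason this appears as a separate, auxiliary statement --- is that Theorem \ref{t3} says nothing by itself about representing $M$ as a $BDS_m$-integral with respect to $N$: that integral is built from both $s(\cdot,N)$ and $s(\cdot,-N)$, hence from both $j\circ{N}$ and $j\circ(-N)$, and since in general $j\circ(-N)\neq-j\circ{N}$ the genuinely multivalued Radon-Nikod\'{y}m statement requires the extra work carried out in Theorem \ref{t4}.
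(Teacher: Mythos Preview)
Your proposal is correct and follows exactly the paper's approach: the paper simply introduces $\nu:=j\circ M$, $\kappa:=j\circ N$ and states the theorem as a direct consequence of \cite[Theorem~1]{mu} (and implicitly \cite[Theorem~2]{mu} for the local version). Your added remarks about why $j\circ M$ and $j\circ N$ are norm-countably additive $\ell_\infty(B_{X'})$-valued measures with control measures, and your closing caveat distinguishing this from the genuinely multivalued statement of Theorem~\ref{t4}, are accurate elaborations of what the paper leaves implicit.
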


We observe that even if $M$ and $N$ are $d_H$-multimeasures, the differentiation of $M$ with respect to $N$ is in general not equivalent to differentiation  of
$j\circ{M}$ with respect to $j\circ{N}$.
If $\theta$ is the Radon-Nikod\'{y}m derivative of $M$ with respect to $N$, the representation $j\circ{M}(E)={\scriptstyle (BDS)}\dint_E\theta\,d(j\circ{N})$
 for every $E\in\vS$ may fail  (see the subsequent Example \ref{ex3}). In fact one has to take into account  also integration with respect to the
 measure $j\circ(-N)$. If $N$ is a vector measure, then $j\circ(-N)=-j\circ{N}$ and formally the measure $j\circ(-N)$ is absent
  in the calculations  and the integral with respect to $N$ coincides with its BDS-integral.\\

In the case of  multimeasures that are not vector measures, we have in general $j\circ(-N)\neq-j\circ{N}$ and the presence of the measure $j\circ(-N)$ becomes visible.
Next theorem shows the shape  of $j\circ N$ provided $M$ has the Radon-Nikod\'{y}m derivative (in the sense of the integral investigated in this paper) with respect to $N$.
\begin{thm}\label{p4}
	Let $M,N:\vS\to{cb(X)}$  be two $d_H$-multimeasures and $\theta:\vO\to\R$ be a  measurable function. Then $M(E)=\dint_E\theta\,dN$ for every $E\in\vS$ if and only if for all $E\in\vS$ and for all $y'\in\ell'_{\infty}(B_{X'})$
	\begin{equation}\label{e21}
\lg{y',j\circ{M}(E)}\rg=\int_E\theta^+\,d\lg{y',j\circ{N}}\rg+
\int_E\theta^-\,d\lg{y',j\circ{(-N)}}\rg\,.
\end{equation}
 Equivalently,
$$j\circ{M}(E)={\scriptstyle (BDS)}\int_E\theta^+\,d(j\circ{N})+{\scriptstyle (BDS)}\int_E\theta^-\,d(j\circ(-N))\,.$$
\end{thm}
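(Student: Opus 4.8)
The plan is to transport the whole statement through the R{\aa}dstr\"om embedding $j:cb(X)\to Y:=\ell_\infty(B_{X'})$. First I would make the routine identifications: the reflection $A\mapsto -A$ is a $d_H$-isometry of $cb(X)$, so $-N$ is again a $d_H$-multimeasure and, by property \ref{s-prel}.b), the three set functions $j\circ M$, $j\circ N$ and $j\circ(-N)$ are countably additive $Y$-valued vector measures. Recall also that $x'\mapsto e_{x'}$ embeds $B_{X'}$ as a norming subset of $Y'=\ell'_\infty(B_{X'})$ and that $\lg e_{x'},j(A)\rg=s(x',A)$ for every $A\in cb(X)$; hence, as (signed) measures, $\lg e_{x'},j\circ N\rg=s(x',N)$, $\lg e_{x'},j\circ(-N)\rg=s(x',-N)$ and $\lg e_{x'},j\circ M\rg=s(x',M)$ for $x'\in B_{X'}$, and these scalar identities extend to all of $X'$ by positive homogeneity.

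The core of the argument is the following correspondence, which I would isolate as a lemma: \emph{for a non-negative measurable $g:\vO\to\R$, $g$ is $BDS_m$-integrable with respect to $N$ in $cb(X)$ if and only if $g$ is $BDS$-integrable with respect to the vector measure $j\circ N$, and in that case $j\bigl(\int_E g\,dN\bigr)=(BDS)\int_E g\,d(j\circ N)$ for every $E\in\vS$.} For non-negative simple $g=\sum_i a_i\chi_{E_i}$ this is immediate from property \ref{s-prel}.a), since $\int_E g\,dN=\bigoplus_i a_i N(E\cap E_i)$ and $j\bigl(\bigoplus_i a_i N(E\cap E_i)\bigr)=\sum_i a_i\,j(N(E\cap E_i))=(BDS)\int_E g\,d(j\circ N)$. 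For the ``only if'' part in general I would use the remark preceding Proposition \ref{sublinear} to get non-negative simple $h_n\to g$ pointwise with $\bigl(\int_E h_n\,dN\bigr)_n$ Cauchy in $(cb(X),d_H)$; by property \ref{s-prel}.b) the sequence $\bigl((BDS)\int_E h_n\,d(j\circ N)\bigr)_n=\bigl(j(\int_E h_n\,dN)\bigr)_n$ is norm-Cauchy in $Y$, and \cite[Lemma 2.3]{Lew}, applied to $h_n$, $g$ and $\lg y',j\circ N\rg$ for each $y'\in Y'$, identifies its limit both with $(BDS)\int_E g\,d(j\circ N)$ and with $j\bigl(\int_E g\,dN\bigr)$. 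For the ``if'' part I would approximate $g$ from below by non-negative simple $h_n\uparrow g$; then $(BDS)\int_E h_n\,d(j\circ N)=j(\int_E h_n\,dN)\in j(cb(X))$, dominated convergence against each $y'\in Y'$ gives $(BDS)\int_E h_n\,d(j\circ N)\to(BDS)\int_E g\,d(j\circ N)$ weakly in $Y$, and since $j(cb(X))$ is a closed convex cone (property \ref{s-prel}.c)) it is weakly closed, so the limit equals $j(C_E)$ for some $C_E\in cb(X)$; evaluating at $e_{x'}$ shows $s(x',C_E)=\int_E g\,ds(x',N)$ for all $x'$, so $g$ is $BDS_m$-integrable with $\int_E g\,dN=C_E$.

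With the lemma available the theorem is essentially bookkeeping. By Definition \ref{defiN}, $M(E)=\int_E\theta\,dN$ for every $E$ means that $\theta^+$ is $BDS_m$-integrable with respect to $N$ and $\theta^-$ with respect to $-N$ (both in $cb(X)$) and $M(E)=\int_E\theta^+\,dN\oplus\int_E\theta^-\,d(-N)$. Applying the lemma to $(\theta^+,N)$ and to $(\theta^-,-N)$, and using $j(A\oplus C)=j(A)+j(C)$ (property \ref{s-prel}.a)) together with the injectivity of $j$, this is equivalent to: $\theta^+$ is $BDS$-integrable with respect to $j\circ N$, $\theta^-$ is $BDS$-integrable with respect to $j\circ(-N)$, and $j\circ M(E)=(BDS)\int_E\theta^+\,d(j\circ N)+(BDS)\int_E\theta^-\,d(j\circ(-N))$ for every $E$ --- exactly the ``Equivalently'' formulation in the statement. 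Testing this identity against an arbitrary $y'\in\ell'_\infty(B_{X'})$ and recalling Lewis' functional definition of the $BDS$-integral gives (\ref{e21}); conversely, (\ref{e21}) at $y'=e_{x'}$ recovers the scalar equalities $s(x',M(E))=\int_E\theta^+\,ds(x',N)+\int_E\theta^-\,ds(x',-N)$ for all $x'$, while (\ref{e21}) for all $y'$ forces $\theta^\pm$ to be $BDS$-integrable with respect to $j\circ N$, $j\circ(-N)$, whence the lemma produces the $cb(X)$-valued integrals needed to conclude $M(E)=\int_E\theta\,dN$ via Definition \ref{defiN}.

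The main obstacle is the lemma of the second step: one has to pass scalar information back and forth between the $cb(X)$-valued multimeasure $N$ and the vector measure $j\circ N$, and the subtle point is that both Cauchy sequences of simple-function integrals (going one way) and their weak limits (going the other way) must return to $j(cb(X))$. This succeeds precisely because $(cb(X),d_H)$ is complete, equivalently because $j(cb(X))$ is a closed --- hence weakly closed --- convex cone in the Banach space $Y$; this is exactly the feature of $d_H$-multimeasures that a general multimeasure lacks, and it is also why the measure $j\circ(-N)$, rather than $-j\circ N$, has to appear.
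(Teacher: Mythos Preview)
Your overall architecture---push everything through the R{\aa}dstr\"om embedding $j$ and reduce to the Lewis $BDS$-integral for the vector measures $j\circ N$ and $j\circ(-N)$---is exactly the route the paper takes. The ``if'' direction of your lemma and the final bookkeeping paragraph are fine.

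The gap is in the ``only if'' direction of your lemma for unbounded $g\ge 0$. You invoke ``the remark preceding Proposition~\ref{sublinear}'' to produce non-negative simple $h_n\to g$ with $\bigl(\int_E h_n\,dN\bigr)_n$ Cauchy in $d_H$. That statement is the last sentence of that remark, and its proof is explicitly deferred to Remark~\ref{r4}---which begins ``According to Theorem~\ref{p4}\ldots'' In other words, the approximation you need is proved \emph{using} the theorem you are proving, so your argument is circular at this step. Concretely: from $BDS_m$-integrability of $g$ you only know $\langle e_{x'},j(C_E)\rangle=\int_E g\,d\langle e_{x'},j\circ N\rangle$ on the norming set $\{e_{x'}\}$; passing to \emph{all} $y'\in\ell_\infty'(B_{X'})$ is precisely the nontrivial point, and it cannot be obtained by quoting Remark~\ref{r4}.

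The paper avoids this by a different reduction. It first treats \emph{bounded} $\theta$: then $\theta^+$ is automatically $BDS$-integrable with respect to the vector measure $j\circ N$ (bounded functions always are), so there is $\kappa_1(E)\in Y$ with $\langle y',\kappa_1(E)\rangle=\int_E\theta^+\,d\langle y',j\circ N\rangle$ for \emph{all} $y'$; similarly for $\kappa_2$ and $j\circ(-N)$. One then checks the identity on the norming family $\{e_{x'}\}$ and concludes $j\circ M(E)=\kappa_1(E)+\kappa_2(E)$, hence (\ref{e21}). For unbounded $\theta\ge 0$ the paper writes $\theta=\sum_n\theta\chi_{E_n}$ with $\theta$ bounded on each $E_n$ and pieces the bounded case together using the $\sigma$-additivity of $\langle y',j\circ M\rangle$ and $\langle y',j\circ N\rangle$. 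This delivers the unbounded ``only if'' with no appeal to the simple-function approximation, and hence no circularity. If you replace your ``only if'' argument by this bounded-first-then-exhaust scheme, the rest of your proposal goes through.
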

\begin{proof}
\begin{description}
\item[ $\Rightarrow$ ]
\begin{description}
\item[(\ref{p4}.A)]
 We assume first that $\theta$ is bounded. Let $M(E)=\dint_E\theta\,dN$ for every $E\in\vS$. According to (\ref{e10}) we have then
\begin{linenomath*}
\begin{eqnarray*}
\lg{e_{x'},j\circ{M}(E)}\rg&=&s(x',M(E))=\int_E\theta^+\,ds(x',N)+\int_E\theta^-\,ds(-x',N)\\
&=&  \int_E\theta^+\,d\lg{e_{x'},j\circ{N}}\rg+\int_E\theta^-\,d\lg{e_{-x'},j\circ{N}}\rg = \\
&=&  \int_E\theta^+\,d\lg{e_{x'},j\circ{N}}\rg+\int_E\theta^-\,d\lg{e_{x'},j\circ{(-N)}}\rg.
\end{eqnarray*}
\end{linenomath*}
Since $\theta$ is bounded, it is $j\circ{N}$ integrable (as a BDS-integral) and so there are measures $\kappa_1,\kappa_2:\vS\to \ell_{\infty}(B_{X'})$  such that
$$\lg{y',\kappa_1(A)}\rg= \int_E\theta^+\,d\lg{y',j\circ{N}}\rg \qquad\mbox{whenever}\;y'\in\ell'_{\infty}(B_{X'})$$
and
$$\lg{y',\kappa_2(E)}\rg=\int_E\theta^-\,d\lg{y',j\circ{(-N)}}\rg \qquad\mbox{whenever}\;y'\in\ell'_{\infty}(B_{X'}).$$
Hence, we obtain the equality
$$\lg{e_{x'},j\circ{M}(E)}\rg=\lg{e_{x'},\kappa_1(E)}\rg+\lg{e_{x'},\kappa_2(E)}\rg.$$
But  $\{e_{x'}:x'\in{B_{X'}}\}$ is norming and the  measures $j\circ{M}, \kappa_1$ and $\kappa_2$ have weakly relatively compact ranges.
 It follows that $\forall\;y'\in\ell'_{\infty}(B_{X'}),\;\forall \;E\in\vS$
\begin{eqnarray}
 &&\lg{y',j\circ{M}(E)}\rg= \int_E\theta^+\,d\lg{y',j\circ{N}}\rg+\int_E\theta^-\,d\lg{y',j\circ{(-N)}}\rg.\label{e4}
\end{eqnarray}
Assume now that  the equality (\ref{e4}) is fulfilled. Then, $\forall\;x'\in B_{X'},\;\forall\;E\in\vS\;$
\begin{linenomath*}
\begin{eqnarray*}
s(x',M(E))&=&\lg{e_{x'},j\circ{M}(E)}\rg =
\int_E\theta^+ \,d\lg{e_{x'},j\circ{N}}\rg
+\int_E\theta^-\,d\lg{e_{x'},j\circ{(-N)}}\rg\\
&=&\int_E\theta^+\,ds(x',N)+\int_E\theta^-\,ds(x',-N)=s(x',\int_E\theta\,dN)\,.
\end{eqnarray*}
\end{linenomath*}
\item[(\ref{p4}.B)]
 Suppose now that $\theta\geq 0$ is arbitrary and  $M(E)=\dint_E\theta\,dN$ for every $E\in\vS$.\\
That means that for every $x'\in{X'}$ and $E\in\vS$ we have
$$ s(x',M(E))=\int_E\theta\,ds(x',N)\,.$$
 $\theta$ can be represented as $\theta=\sum_n\theta\chi_{E_n}$, where the sets $E_n$ are pairwise disjoint and each $\theta\chi_{E_n}$ is bounded. According to
 (\ref{p4}.A) part,  we have for all $y'\in\ell'_{\infty}(B_{X'})$
$$\lg{y',j\circ{M}(E\cap{E_n})}\rg= \int_{E\cap{E_n}}\theta\,d\lg{y',j\circ{N}}\rg\,,\quad n\in\N\,.$$
Since for each $y'\in\ell'_{\infty}(B_{X'})$ the set function $\lg{y',j\circ{M}}\rg$ is a measure, we obtain the
 equality $\sum_n\lg{y',j\circ{M}(E\cap{E_n})}\rg=\lg{y',j\circ{M}(E)}\rg$. Since for each $y'\in\ell'_{\infty}(B_{X'})$ the
 set function $\lg{y',j\circ{N}}\rg$ is a measure, we have also
$$\sum_n  \int_{E\cap{E_n}}\theta\,d\lg{y',j\circ{N}}\rg=
\int_E\theta\,d\lg{y',j\circ{N}}\rg\,.$$
\item[(\ref{p4}.C)]
 $\theta$ is arbitrary and   $M(E)=\dint_E\theta\,dN$ for every $E\in\vS$. The proof is obvious.
\end{description}
\item[$\Leftarrow$]
 In (\ref{e21}) one should substitute $e_{x'}$ instead of $y'$.
 \end{description}
\end{proof}
\begin{rem}\label{r4}\rm
Let $f$ be a
$BDS_m$-integrable function and  assume that $N$ is  $cb(X)$-valued  $d_H$-multimeasure.
Then there exists a sequence $(f_n)_n$ of simple functions  that is pointwise convergent to $f$ and the sequence $\biggl(\dint_E f_n\,dN\biggr)_n$ is
Cauchy in $(cb(X),d_H)$. \\
Indeed, let $M(E)=  \int_E f \,dN\,,E\in\vS$. According to Theorem \ref{p4}  $f^+$ and $f^-$ are $j\circ{N}$  integrable  as (BDS) integrals)
 and we have for all $E\in\vS$, with $E \subset supp\,f^+$
 $$
j\circ{M}(E)=  {\scriptstyle (BDS)} \int_E f^+\,d\,(j\circ{N})\,.
$$
In virtue of  \cite[Definition 2.5]{bds} there exists a sequence of  pointwise convergent to $f^+$ simple functions $h_n:supp\,f^+\to[0,\infty)$ such that the sequence
$\biggl( {\scriptstyle (BDS)} \dint_E h_n\,d\,(j\circ{N})\biggr)_n$ is Cauchy in $\ell^{\infty}(B_{X'})$. It follows from
Theorem \ref{p4} that the sequence $\biggl(\dint_E h_n\,dN\biggr)_n$ is Cauchy in $d_H$. We repeat the procedure
 with $f^-$ obtaining a sequence $(g_n)_n$ of simple functions. Setting $f_n:=h_n-g_n$ we find the required sequence.\\
\end{rem}

By Theorem \ref{p4} and Proposition \ref{l-pos} we get
\begin{cor}\label{p1}
	Let $M,N:\vS\to{cb(X)}$ be two consistent $d_H$-multimeasures.	
Moreover let $\theta:\vO\to\R$ be a  measurable function.
 \begin{itemize}
\item If
	  \, $j\circ{M}(E)={\scriptstyle (BDS)}\dint_E\theta\,d (j\circ{N})$ for every $E\in\vS$, then $\theta\geq 0$ $N$-a.e.  and   for all $E\in\vS$ and all $x'\in{X'}$
$s(x',M(E))=\dint_E\theta\,ds(x',N)$.\\
\item And conversely, if $\theta$ is non-negative
and $s(x',M(E))=\dint_E\theta\,ds(x',N)$ for every $E\in\vS$, then
$j\circ{M}(E)={\scriptstyle (BDS)}\dint_E\theta\,d (j\circ{N})$ for every $E\in\vS$.
\end{itemize}
\end{cor}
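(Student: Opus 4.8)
The plan is to move back and forth between the support‑function description of the $BDS_m$‑integral and the vector‑measure (R{\aa}dstr\"{o}m) description, using Theorem \ref{p4} as the dictionary and Proposition \ref{l-pos} to pin down the sign of $\theta$.

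\emph{The reverse implication.} Suppose $\theta\ge 0$ and $s(x',M(E))=\dint_E\theta\,ds(x',N)$ for all $E\in\vS$ and $x'\in X'$. Since then $\theta^+=\theta$ and $\theta^-=0$, the defining equality (\ref{e10}) of Definition \ref{defiN} for $M_\theta(E)$ reads $s(x',M_\theta(E))=\dint_E\theta\,ds(x',N)=s(x',M(E))$; because a nonempty closed convex set is determined by its support function, this says exactly $M(E)=\dint_E\theta\,dN$ for every $E$. Feeding this identity into Theorem \ref{p4} and discarding the (vanishing) $\theta^-$‑term gives $j\circ M(E)={\scriptstyle(BDS)}\dint_E\theta^+\,d(j\circ N)+{\scriptstyle(BDS)}\dint_E\theta^-\,d(j\circ(-N))={\scriptstyle(BDS)}\dint_E\theta\,d(j\circ N)$, which is the assertion. (Consistency is not needed for this direction; Theorem \ref{p4} alone suffices.)

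\emph{The forward implication.} Assume $j\circ M(E)={\scriptstyle(BDS)}\dint_E\theta\,d(j\circ N)$ for all $E\in\vS$. Unwinding the definition of the $(BDS)$‑integral (Definition \ref{def21}) with the vector measure $j\circ N:\vS\to\ell_\infty(B_{X'})$, one has $\lg y',j\circ M(E)\rg=\dint_E\theta\,d\lg y',j\circ N\rg$ for every $y'\in\ell'_\infty(B_{X'})$ and every $E$. Specializing $y'=e_{x'}$ with $x'\in B_{X'}$, using $\lg e_{x'},j\circ M(\cdot)\rg=s(x',M(\cdot))$ and $\lg e_{x'},j\circ N(\cdot)\rg=s(x',N(\cdot))$, and then rescaling by positive homogeneity of support functions to arbitrary $x'\in X'$, we obtain $s(x',M(E))=\dint_E\theta\,ds(x',N)$ for all $E\in\vS$ and all $x'\in X'$ --- this is already the second half of the conclusion. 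To get $\theta\ge 0$ $N$‑a.e.\ one invokes consistency: on the set $H$ on which $N$ is pointless the displayed identity is precisely the hypothesis of Proposition \ref{l-pos}, whence $\theta\ge 0$ $N$‑a.e.\ on $H$; on $H^c$, where $N$ is a genuine vector measure, $j\circ(-N)=-j\circ N$, the $BDS_m$‑integral collapses to the classical Bartle--Dunford--Schwartz integral, and $\theta$ is an ordinary density there.

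The routine parts are the reverse implication and the substitution $y'\mapsto e_{x'}$ together with the homogeneity rescaling. The step requiring care --- and the only place the consistency hypothesis is genuinely used --- is the non‑negativity of $\theta$: it is precisely the pointlessness of $N$ on $H$ that, through Proposition \ref{l-pos}, excludes negative values, reflecting the fact that a pointless multimeasure can only ``see'' non‑negative integrands.
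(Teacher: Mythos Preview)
Your proof is correct and follows exactly the route the paper indicates: the paper's own proof is the single line ``By Theorem~\ref{p4} and Proposition~\ref{l-pos} we get'', and you have simply unpacked these two references --- Theorem~\ref{p4} as the dictionary between the $BDS_m$-representation $M(E)=\int_E\theta\,dN$ and the embedded representation (\ref{e21}), and Proposition~\ref{l-pos} for the sign of $\theta$ on the pointless part. Your observation that on the vector part $H^c$ the density $\theta$ is merely an ordinary BDS density (with no sign constraint) is consistent with Remark~\ref{r2}; the paper's one-line proof does not address $H^c$ either.
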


It is our aim to obtain a Radon-Nikod\'{y}m theorem for  R{\aa}dstr\"{o}m embeddings of multimeasures in terms of the multimeasures themselves, not their R{\aa}dstr\"{o}m embeddings.  To achieve it we modify the  definitions of $usac$, $usd$ and $uss$   in the following way:
\begin{deff}\label{vstrong}
\rm
Given  two  multimeasures $M,N:\vS\to{cb(X)}$ we say that:
 \begin{description}
\item[(s-usac)]
 A multimeasure $M:\vS\to{cb(X)}$ is \textit{ strongly uniformly scalarly absolutely continuous} ($s$-$usac$) \textit{ with respect to a multimeasure}
$N:\vS\to{cb(X)}$, if for each $\ve>0$ there exists $\delta>0$ such that for each  $\sum_{m=1}^na_mx_m'\in{\rm span }B_{X'}$ and each
$E\in\vS$,   if $|\sum_{m=1}^na_ms(x_m',N)|(E)\leq\delta$,  then
$$\biggl|\sum_{m=1}^na_ms(x_m',M)\biggr|(E)\leq\ve.$$
 We denote it by $M \lll_s N$.
\item[(s-usd)] A multimeasure $M:\vS\to{cb(X)}$ is  \textit{ strongly uniformly scalarly dominated} ($s$-$usd$) \textit{ by a multimeasure}  $N:\vS\to{cb(X)}$, if
there exists a positive $d\in\R$ such that for every $E\in\vS$ and every  $\sum_{m=1}^n a_m x_m' \in {\rm span \,}B_{X'} $, one has
$$\biggl|\sum_{m=1}^n a_m s(x'_m,M)\biggr|(E)\leq  d\,
\biggl|\sum_{m=1}^n a_m s(x_m',N)\biggr|(E)\,.$$
\item[(s-uss)] A multimeasure $M$ is \textit{ strongly  uniformly scalarly subordinated to $N$} ($s$-$uss$), if
there exists a positive $d\in\R$ such that for every $E\in\vS$ and every  $\sum_{m=1}^n a_m x_m' \in {\rm span }\,  B_{X'} $, one has
\begin{eqnarray}
\sum_{m=1}^n a_m s(x'_m,M(E))
\in d\,\ov{aco}\,\biggl\{\sum_{m=1}^na_m s(x'_m,N(F))\colon F\in\vS_E\biggr\}\,.
\end{eqnarray}
\end{description}
\end{deff}

Using these strong versions of  uniform scalar absolute  continuity, uniform scalar domination and uniform scalar subordination we are able to prove the following result:
\begin{thm}\label{t4}
	Let $M,N :\vS\to{cb(X)}$  be two consistent $d_H$-multimeasures. Then the following  are equivalent
	\begin{description}
		\item[(RN$_j$)]
		There exists a bounded measurable function (measurable function) $\theta:\vO\to\R$ such that for all $E\in\vS$ and
		$y'\in \ell_{\infty}'(B_{X'})$		
		$$\lg{y', j\circ{M}(E)}\rg =\int_E\theta\,d \lg{y',j\circ{N}}\rg\,;$$
				\item[(\ref{t4}.j)]
		${M}$ is (locally)	strongly uniformly scalarly dominated by  $N$;
		\item[(\ref{t4}.jj)]
	     $M$ is    (locally) strongly uniformly scalarly absolutely continuous with respect to  $N$;
\item[(\ref{t4}.jjj)] $M$ is  (locally)  strongly  uniformly scalarly subordinated  to $N$.
 \end{description}
\end{thm}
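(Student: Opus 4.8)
The plan is to follow the proof of Theorem \ref{t1}, with the pairs $\alpha x'+\beta y'$ replaced by arbitrary elements $\sum_{m=1}^{n}a_m x_m'$ of ${\rm span}\,B_{X'}$ (which is all of $X'$ and is closed under multiplication by positive scalars), and then to pass from a representation $s(x',M(E))=\int_E\theta\,ds(x',N)$ to its R{\aa}dstr\"{o}m form by means of Corollary \ref{p1} (equivalently Theorem \ref{p4}). As in Theorems \ref{t1} and \ref{t2}, consistency lets us split the argument into the pointless part, on a set $H$, and the vector part, on $H^c$. On $H^c$ one has $j\circ(-N)=-j\circ N$ and $x'\mapsto s(x',\cdot)$ is linear on $X'$, so the strong conditions collapse to the classical ones and the equivalence with (RN$_j$) is \cite[Theorem 1]{mu} (together with the passage from the norming family $\{e_{x'}\colon x'\in B_{X'}\}$ to all of $\ell'_{\infty}(B_{X'})$ carried out in the proof of Theorem \ref{p4}). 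So I would concentrate on the pointless part, taking $H=\vO$, and treat first the global bounded case.

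Among the strong conditions, (\ref{t4}.j)$\,\Rightarrow\,$(\ref{t4}.jj) is immediate, and (\ref{t4}.jj)$\,\Rightarrow\,$(\ref{t4}.j) follows by the rescaling of Proposition \ref{+forte}: given $\sum a_m x_m'$ with $v:=|\sum a_m s(x_m',N)|(E)>0$, replacing each $a_m$ by $\delta a_m/(2v)$ keeps the combination in ${\rm span}\,B_{X'}$ and, by positive homogeneity of the support function, multiplies both variations by $\delta/(2v)$, yielding the constant $d=2\ve/\delta$. Also (\ref{t4}.jjj)$\,\Rightarrow\,$(\ref{t4}.j): the set $\ov{aco}\{\sum a_m s(x_m',N(F))\colon F\in\vS_E\}$ is a symmetric interval of half-length at most $|\sum a_m s(x_m',N)|(E)$, so applying the inclusion of (\ref{t4}.jjj) on each block of a finite partition of $E$ and using additivity of the variation gives the domination inequality. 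For (RN$_j$)$\,\Rightarrow\,$(\ref{t4}.jjj), Corollary \ref{p1} supplies a (bounded) measurable $\theta\ge 0$ with $s(x',M(E))=\int_E\theta\,ds(x',N)$ for all $E$ and $x'$; fixing $\sum a_m x_m'$ and writing $\sigma:=\sum a_m s(x_m',N)$, so that $\sum a_m s(x_m',M(E))=\int_E\theta\,d\sigma$, the Hahn decomposition of $\sigma$ together with the mean value estimate on each piece --- exactly the computation in the (RN$_b$)$\,\Rightarrow\,$(\ref{t1}.iii) step of Theorem \ref{t1} --- puts $\sum a_m s(x_m',M(E))$ into $\ov{aco}\{\sum a_m s(x_m',N(F))\colon F\in\vS_E\}$ scaled by $\|\theta\|_{\infty}$. (The direct implication (RN$_j$)$\,\Rightarrow\,$(\ref{t4}.j) is even shorter: with $y'=\sum a_m e_{x_m'}$ the defining equality of (RN$_j$) reads $\sum a_m s(x_m',M(F))=\int_F\theta\,d\sigma$, hence $|\sum a_m s(x_m',M)|(E)=\int_E|\theta|\,d|\sigma|\le\|\theta\|_{\infty}\,|\sigma|(E)$.)

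The crux is (\ref{t4}.j)$\,\Rightarrow\,$(RN$_j$) in the bounded case. Let $\mu$ be a common finite control measure for $M$ and $N$ (it exists, since $d_H$-multimeasures always admit control measures). For each $x'\in B_{X'}$ pick, by the classical Radon-Nikod\'{y}m theorem, densities $f_{x'}$, $g_{x'}$ with $s(x',M(E))=\int_E f_{x'}\,d\mu$ and $s(x',N(E))=\int_E g_{x'}\,d\mu$. Taking densities in the $n=2$ instance of (\ref{t4}.j) gives $|\alpha f_{x'}+\beta f_{y'}|\le d\,|\alpha g_{x'}+\beta g_{y'}|$ $\mu$-a.e., for all $\alpha,\beta\in\R$ and $x',y'\in B_{X'}$ --- precisely the hypothesis used in the pointless part of Theorem \ref{t1}. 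By \cite[Theorem 7.35.2]{za} (see also \cite[Lemma]{mu}) there is then a measurable $\theta\colon\vO\to[-d,d]$ with $f_{x'}=\theta g_{x'}$ $\mu$-a.e. for every $x'$, whence $s(x',M(E))=\int_E\theta\,ds(x',N)$ for all $E$ and $x'$. Since $N$ is pointless, Proposition \ref{l-pos} forces $\theta\ge 0$ $N$-a.e.; modifying $\theta$ on an $N$-null set, the second statement of Corollary \ref{p1} gives $j\circ M(E)={\scriptstyle(BDS)}\int_E\theta\,d(j\circ N)$ for every $E$, that is, (RN$_j$). The unbounded case and the ``locally'' versions are obtained by the exhaustion argument of Theorem \ref{t2}: one produces pairwise disjoint $E_n$ on each of which (\ref{t4}.j) holds with constant $n$, applies the bounded case on each $E_n$ to get $\theta_n$, and sets $\theta=\sum_n\theta_n\chi_{E_n}$; the series occurring in the verification have terms of constant sign and $s(x',M(E))>-\infty$, so each either converges absolutely or diverges to $+\infty$, and the gluing goes through. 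Together with the vector part on $H^c$ (via \cite[Theorem 1]{mu} and Theorem \ref{p4}) and the amalgamation of the two densities across $H$, this closes the cycle (RN$_j$)$\,\Rightarrow\,$(\ref{t4}.jjj)$\,\Rightarrow\,$(\ref{t4}.j)$\,\Leftrightarrow\,$(\ref{t4}.jj)$\,\Rightarrow\,$(RN$_j$).

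I expect the main obstacle to be this last implication, and within it the following point. The condition (\ref{t1}.i) of Theorem \ref{t1} splits $\vO$ into a set $A$ on which $M$ is compared with $N$ and a complement on which $M$ is compared with $-N$; the strong condition (\ref{t4}.j) has no such splitting set and compares $M$ with $N$ on all of $\vO$ at once. It is this that rules out a nontrivial ``$-N$ piece'', forces the density to be non-negative in the support-function sense, and hence places one in the hypotheses of Corollary \ref{p1} rather than merely of (RN$_b$) --- recall that (RN$_b$) does not imply (RN$_j$) in general (cf.\ Example \ref{ex3}). The other delicate point is the control-measure bookkeeping and the constant-sign series in the unbounded and local cases.
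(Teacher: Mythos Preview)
Your argument is correct, but it takes a genuinely different route from the paper, and the difference is concentrated in the key implication (\ref{t4}.jj)$\,\Rightarrow\,$(RN$_j$) (equivalently, your (\ref{t4}.j)$\,\Rightarrow\,$(RN$_j$)).

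You work at the level of support functions: from the $n=2$ instance of s-usd you extract the pointwise inequality $|\alpha f_{x'}+\beta f_{y'}|\le d\,|\alpha g_{x'}+\beta g_{y'}|$, invoke Zaanen's lemma to produce a single bounded $\theta$ with $f_{x'}=\theta g_{x'}$, use pointlessness (Proposition \ref{l-pos}) to force $\theta\ge 0$, and then lift to (RN$_j$) via Corollary \ref{p1}/Theorem \ref{p4}. This requires the pointless/vector split supplied by consistency, and it reuses the machinery of Theorems \ref{t1}--\ref{t2} almost verbatim. The paper instead works entirely at the level of the embedded vector measures $j\circ M$, $j\circ N$: it shows directly that s-usac of $M$ with respect to $N$ implies usac of $j\circ M$ with respect to $j\circ N$, by an approximation argument exploiting that $\{e_{x'}:x'\in B_{X'}\}$ is norming for $\ell_\infty(B_{X'})$ --- given $y'\in\ell_\infty'(B_{X'})$ with $|\langle y',j\circ N\rangle|(E)$ small, one chooses $z'\in{\rm span}\{e_{x'}\}$ close enough on the Hahn pieces of $E$ to transfer the estimate. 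Then Theorem \ref{t3} (i.e.\ \cite[Theorem 1]{mu} for the vector measures $j\circ M,\,j\circ N$) delivers (RN$_j$) in one stroke, with no pointless/vector split and no Zaanen lemma. Similarly, for (RN$_j$)$\,\Rightarrow\,$(\ref{t4}.jjj) the paper starts from the subordination $j\circ M(E)\in d\,\ov{\rm aco}\{j\circ N(F):F\in\vS_E\}$ (Theorem \ref{t3}) and simply evaluates at $\sum a_i e_{x_i'}$, whereas you go through $\theta\ge 0$ and a Hahn decomposition as in Theorem \ref{t1}. Your approach is more constructive and keeps the density explicit throughout; the paper's is shorter and uniform, bypassing consistency in the body of the argument by reducing everything to the vector-measure Radon--Nikod\'{y}m theorem for $j\circ M$ and $j\circ N$.
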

\begin{proof}
\begin{description}
\item[(RN$_j$) $\Rightarrow$ (\ref{t4}.jjj)]
By Theorem \ref{t3}   the condition (RN$_j$) implies that	$j\circ{M}$ is $s$-$uss$ to  $j\circ{N}$\,. Assume that $d>0$ is such that
 $$ j\circ{M}(E)\in{d}\,\ov{\rm aco}\{j\circ{N}(F)\colon F\in\vS_E\}
 \in cb(\ell_{\infty} (B_{X'}))
 \qquad\mbox{for every } E\in\vS\,. $$
 This means that for every $y'\in\ell_{\infty}'(B_{X'})$, we have
\begin{eqnarray*}
&&  \langle{y',j\circ{M}(E)}\rangle  \leq
 d  \sup \{ \lg y', z \rg: \,  z \in \ov{\rm aco}\{j\circ{N}(F)\colon  F\in\vS_E\} \,\}
\\
 &=&  d\, \sup s( y', {\rm aco}\{j\circ{N}(F)\colon F\in\vS_E \})\\
 &\leq&
 d\max\{s(y',{\rm co}\{j\circ{N}(F)\colon F\in\vS_E\}), s(y',{\rm co}\{-j\circ{N}(F)\colon  F\in\vS_E\})\}\\
 &=&d\max\{s(y',\{j\circ{N}(F)\colon  F\in\vS_E\}), s(y',\{-j\circ{N}(F)\colon  F\in\vS_E\})\}.
\end{eqnarray*}
So,  if $\{x_1',\ldots,x_n'\}\subset {X'}$ and $a_i\in\R,\,i=1,\ldots,n$, then
\begin{eqnarray*}
 \sum_{i=1}^na_is(x_i',M(E))
&=& \sum_{i=1}^na_i\lg{e_{x_i'},j\circ{M(E)}}\rg=\biggl\lg{\sum_{i=1}^na_i e_{x_i'},j\circ{M(E)}}\biggr\rg\\
&\leq& d\max\biggl\{s\biggl(\sum_{i=1}^na_i e_{x_i'},\{j\circ{N(F)}\colon F\in\vS_E\}\biggr),
\\ &&
s\biggl(\sum_{i=1}^na_i e_{x_i'},\{-j\circ{N(F)}\colon F\in\vS_E\}\biggr)\biggr\}
\\
&=& d \max\biggl\{\sup_{F\in\vS_E}\biggl\lg{\sum_{i=1}^na_i e_{x_i'},j\circ{N(F)}}\biggr\rg,
\sup_{F\in\vS_E}\biggl\lg{\sum_{i=1}^na_i e_{x_i'},-j\circ{N(F)}}\biggr\rg\biggr\}
\\
&=& d \max\biggl\{\sup_{F\in\vS_E}   \biggl[\sum_{i=1}^na_i\lg{e_{x_i'},j\circ{N} (F)}\rg\biggr] ,
\sup_{F\in\vS_E} \biggl[-\sum_{i=1}^na_i\lg{e_{x_i'},j\circ{N} (F)}\rg\biggr] \biggr\}\\
&=& d \max\biggl\{\sup_{F\in\vS_E}   \biggl[\sum_{i=1}^na_is(x_i',N(F))\biggr],
\sup_{F\in\vS_E}\biggl[-\sum_{i=1}^na_is(x_i',N(F))\biggr]\biggr\}\\
&=& d \max\biggl\{\sup_{F\in\vS_E}   \biggl[\sum_{i=1}^na_is(x_i',N(F))\biggr],
-\inf_{F\in\vS_E}\biggl[\sum_{i=1}^na_is(x_i',N(F))\biggr]\biggr\}\\
&=& d \sup\,{\rm \ov{aco}}\biggl\{\sum_{i=1}^na_is(x_i',N(F))\colon F\in\vS_E\biggr\}.
\end{eqnarray*}
Since we have also
$$
-\sum_{i=1}^na_is(x_i',M(E)) \leq d \sup\,\ov{\rm aco}\biggl\{\sum_{i=1}^na_is(x_i',N(F))\colon F\in\vS_E\biggr\}\,,$$
we deduce that
$$\sum_{i=1}^na_is(x_i',M(E)) \in d\,\ov{aco}\biggl\{\sum_{i=1}^na_is(x_i',N(F))\colon F\in\vS_E\biggr\}\,.$$
\item[(\ref{t4}.jjj) $\Rightarrow$ (\ref{t4}.j)]
By the assumption
\begin{linenomath*}
\begin{eqnarray*}
\pm\sum_{i=1}^na_i s(x_i',M(E)) &\leq& d \sup\,{\rm \ov{aco}}\biggl\{\sum_{i=1}^na_is(x_i',N(F))\colon F\in\vS_E\biggr\}\\
&\leq &  d \sup\,{\rm \ov{aco}}\biggl\{\biggl|\sum_{i=1}^na_is(x_i',N)\biggr|(F)\colon F\in\vS_E\biggr\}\\
&\leq& d \biggl|\sum_{i=1}^na_is(x_i',N)\biggr|(E)\,.
\end{eqnarray*}
\end{linenomath*}
Hence, for every set $E \in \vS$ we have
\begin{eqnarray*}
\biggl|\sum_{i=1}^na_i s(x_i',M)(E)\biggr|\leq d \biggl|\sum_{i=1}^na_is(x_i',N)\biggr|(E)
\end{eqnarray*}
and the standard calculation gives the required inequality for the variations.
\item[ (\ref{t4}.j) $\Rightarrow$ (\ref{t4}.jj)] is obvious.
\item[(\ref{t4}.jj) $\Rightarrow$ (RN$_j$)]
  Assume that for each $\ve>0$ there exists $\ve/2>\delta>0$ such that for each  $\sum_{m=1}^na_mx_m'\in{\rm span }\, B_{X'}$ and each $E\in\vS$, we have
  \begin{eqnarray*}
   \biggl|\sum_{m=1}^na_ms(x_m',N)\biggr|(E)\leq\delta, \,\, \Rightarrow \biggl|\sum_{m=1}^na_ms(x_m',M)\biggr|(E)\leq\ve/2.
   \end{eqnarray*}
    It follows that
 \begin{equation}\label{e1}
 \left|\biggl\langle{\sum_{m=1}^na_me_{x_m'},j\circ{N}}\biggr\rangle\right|(E)\leq\delta\Rightarrow
 \left|\biggl\langle{\sum_{m=1}^na_me_{x_m'},j\circ{M}}\biggr\rangle\right|(E)\leq\ve/2\,.
 \end{equation}
Let $E\in\vS$ and $y'\in\ell_{\infty}'(B_{X'})$ be such that  $|\langle{y',j\circ{N}}\rangle|(E)\leq\delta/2$. Then, let $E^+_{y'}\cup{E^-_{y'}}=E$ be the
 Hahn decomposition of $E$ with respect to $\langle{y',j\circ{N}}\rangle$.\\
Since $\{e_{x'}:x'\in{B_{X'}}\}$ is norming for $\ell_{\infty}(B_{X'})$ there exists $z'\in{\rm span}\{e_{x'}:x' \in B_{X'}\}$ such that
$$
|\langle{y'-z',j\circ{N}(E^+_{y'})}\rangle|<\delta/4 \quad \mbox{and}\quad
|\langle{y'-z',j\circ{N}(E^-_{y'})}\rangle|<\delta/4
$$
and
$$
|\langle{y'-z',j\circ{M}(E^+_{y'})}\rangle|<\delta/4 \quad \mbox{and}\quad
|\langle{y'-z',j\circ{M}(E^-_{y'})}\rangle|<\delta/4.
$$
We have then
\begin{linenomath*}
\begin{eqnarray*}
|\langle{z',j\circ{N}}\rangle(E)|&=&
|\langle{z',j\circ{N}(E^+_{y'})}\rangle+\langle{z',j\circ{N}(E^-_{y'})}\rangle|
\\&=&
 [\langle{z',j\circ{N}(E^+_{y'})}\rangle-\langle{y',j\circ{N}(E^+_{y'})}\rangle]\\
&+&
[\langle{z',j\circ{N}(E^-_{y'})}\rangle-\langle{y',j\circ{N}(E^-_{y'})}\rangle]
\\&+&
 [\langle{y',j\circ{N}(E^+_{y'})}\rangle+\langle{y',j\circ{N}(E^-_{y'})}\rangle]<\delta.
\end{eqnarray*}
By the assumption $|\langle{z',j\circ{M}}\rangle|(E)\leq\ve/2$. Now we follow the reverse way:
\begin{eqnarray*}
|\langle{y',j\circ{M}}\rangle|(E) &=&
\langle{y',j\circ{M}(E^+_{y'})}\rangle+\langle{y',j\circ{M}(E^-_{y'})}\rangle
\\&=&
[\langle{y',j\circ{M}(E^+_{y'})}\rangle-\langle{z',j\circ{M}(E^+_{y'})}\rangle]\\
&+&
[\langle{y',j\circ{M}(E^-_{y'})}\rangle-\langle{z',j\circ{M}(E^-_{y'})}\rangle]
\\&+&
[\langle{z',j\circ{M}(E^+_{y'})}\rangle+\langle{z',j\circ{M}(E^-_{y'})}\rangle]\\ &<& \delta+\ve/2<\ve.
\end{eqnarray*}
\end{linenomath*}
That means that $j\circ{M}$ is $s$-$usac$  with respect to $j\circ{N}$ and so, in virtue of Theorem \ref{t3}
$j\circ{M}$ has the derivative.\\
\end{description}

The proof of the local versions is analogous to that  given in Theorem \ref{t2} where the construction is given for the general case of an arbitrary multimeasure.
\end{proof}

\begin{rem} \rm
Theorems \ref{t3} and \ref{t4} allow us to observe that 	$j\circ{M}\lll{j\circ{N}}$  if and only if $M$ is $s$-$usac$
with respect to  $N$. Analogously $j\circ{M}$  is $usd$ by ${j\circ{N}}$ if and only if $M$ is $s$-$usd$ by $N$.
\end{rem}

\begin{rem}\label{r10} \rm
It turns out that the condition $(RN_j)$ implies the following one (that coincides with the sub condition in case of vector measures):
\begin{description}
\item [(sub)]
\qquad$\forall\;E\in\vS\;M(E)\subset {d}\,\ov{\rm aco}\left[\, \mathcal{R} (N_E) \cup \mathcal{R} (-N_E)\right].$
\end{description}

 In fact, by Theorem \ref{t3} $j\circ{M}$ is subordinated to $j\circ{N}$. Assume that $d>0$ is such that, for every $E \in \vS$,
 
$$ j\circ{M}(E)\in{d}\,\ov{\rm aco}\{j\circ{N}(F)\colon F\in \vS_E\} \in cb(\ell_{\infty} (B_{X'})). $$

 This means that for every $y'\in\ell_{\infty}'(B_{X'})$, we have
 \begin{linenomath*}
\begin{eqnarray*}
 \langle{y',j\circ{M}(E)}\rangle  &\leq&
  d\, s(y',\ov{\rm aco}\{j\circ{N}(F)\colon F \in \vS_E\})
  = d\, s(y',{\rm aco}\{j\circ{N}(F)\colon  F \in \vS_E\})\\
 &\leq&d\max\left\{s(y',{\rm co}\{j\circ{N}(F)\colon F \in \vS_E\}), s(y',{\rm co}\{-j\circ{N}(F)\colon F \in \vS_E\})\right\}\\
 &=&d\max \left\{s(y',\{j\circ{N}(F)\colon  F \in \vS_E\}), s(y',\{-j\circ{N}(F)\colon  F \in \vS_E\}) \right\}.
\end{eqnarray*}
\end{linenomath*}
 In particular, if $x'\in B_{X'}$, then
 \begin{linenomath*}
 \begin{eqnarray*}
 s(x',M(E))&=&\langle{e_{x'},j\circ{M}(E)}\rangle\leq  d\, s(e_{x'},{\rm aco}\{j\circ{N}(F)\colon  F \in \vS_E\})\\
 &=&d\max\{s(e_{x'},\{j\circ{N}(F): F \in \vS_E\}),s(e_{x'},\{-j\circ{N}(F):F \in \vS_E\})\}\\
 &=&d\max\{s(e_{x'},\{j\circ{N}(F): F \in \vS_E\}),s\{
 \lg{e_{x'},-j\circ{N}(F)}\rg: F \in \vS_E\})\}\\
 &=& d\max\{s(x',N(F)): F \in \vS_E\}),s\{\lg{x',-N(F)}\rg: F \in \vS_E\})\}\\
 &=& d \max\{s(x', \mathcal{R} (N_E)\,) , s(x', \mathcal{R} (-N_E)\,) \} \leq
 d\, s(x', \mathcal{R} (N_E) \cup \mathcal{R} (-N_E)\, )\\
 &\leq&  d\, s(x', \ov{\rm aco} \left[\mathcal{R} (N_E) \cup \mathcal{R} (-N_E) \right] \,) =
 s(x', d\, \ov{\rm aco} \left[\mathcal{R} (N_E) \cup \mathcal{R} (-N_E) \right]\,).
 \end{eqnarray*}
 \end{linenomath*}
 In particular, if $x\in{M(E)}$, then $\lg{x',x}\rg\leq s(x', d\, \ov{\rm aco} \left[\mathcal{R} (N_E) \cup \mathcal{R} (-N_E) \right]\,)$.
 In virtue of the Hahn-Banach theorem $x\in {d}\,\ov{\rm aco}\left[\, \mathcal{R} (N_E) \cup \mathcal{R} (-N_E)\right]$. Thus,
\begin{eqnarray*}
M(E)\subset {d}\,\ov{\rm aco}\left[\, \mathcal{R} (N_E) \cup \mathcal{R} (-N_E)\right].
\end{eqnarray*}

The example  given in Remark \ref{r3} proves that in case of arbitrary multimeasures  the condition $sub$  is sometimes essentially weaker than $(RN_j)$.
The current example shows that also in case of pointless multimeasures  the  $sub$   does not guarantee the existence of the Radon-Nikod\'{y}m derivative.
It is enough to take a pointless multimeasure $N:\Sigma \to{cwk(X)}$ and define $M:\vS\to{cwk(X)}$ by $M(E):= \ov{\rm aco}\left[\, \mathcal{R} (N_E) \cup \mathcal{R} (-N_E)\right]$.
\end{rem}

Next theorem   provides a Radon-Nikod\'{y}m representation without invoking to the  R{\aa}dstr\"{o}m embedding.

\begin{thm}\label{ultimo}
Let $M,N:\vS\to{cb(X)}$ be two consistent $d_H$-multimeasures. If $M$ is s-usac	(s-usd or s-uss) with respect to $N$,  then there exists a non-negative,
measurable,  bounded,   ${BDS}_m$-integrable  with respect to $N$ function   $\theta:\vO \to \R$   such that	
  $$M(E) = \int_{E} \theta\,  dN, \qquad \forall \, E \in \vS.$$
	 \end{thm}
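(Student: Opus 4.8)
The plan is to read the representation off Theorem~\ref{t4}, exploiting that the \emph{strong} conditions produce a density that works against $j\circ N$ directly (not only against $j\circ N$ and $j\circ(-N)$ separately), and that such a density is automatically non-negative on the pointless part.

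First I would apply Theorem~\ref{t4}: each of $s$-$usac$, $s$-$usd$, $s$-$uss$ is equivalent to $(RN_j)$, so there is a bounded measurable $\theta:\vO\to\R$ with $\lg{y',j\circ{M}(E)}\rg=\dint_E\theta\,d\lg{y',j\circ{N}}\rg$ for all $E\in\vS$ and all $y'\in\ell_{\infty}'(B_{X'})$. Substituting $y'=e_{x'}$ for $x'\in B_{X'}$ and using $\lg{e_{x'},j\circ{M}(E)}\rg=s(x',M(E))$ and $\lg{e_{x'},j\circ{N}}\rg=s(x',N)$, and then extending by homogeneity, we obtain
$$s(x',M(E))=\int_E\theta\,ds(x',N)\qquad\text{for every }E\in\vS\text{ and every }x'\in{X'},$$
the integral on the right being finite since $\theta$ is bounded and, $N$ being $cb(X)$-valued, each $s(x',N)$ is a finite measure.

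Next I would invoke Proposition~\ref{l-pos}. Writing $\vO=H\cup H^c$ for the consistency decomposition of $M,N$ (pointless on $H$, vector measures on $H^c$), Proposition~\ref{l-pos} applied to $M|_H,N|_H$ gives $\theta\ge 0$ $N$-a.e.\ on $H$ (on $H^c$ one argues exactly as in the vector part of Theorem~\ref{t1}, i.e.\ via \cite{mu}). Put $Z:=\{\omega\in H:\theta(\omega)<0\}$, so $N(Z)=\{0\}$. Then $s(x',N(F))+s(x',N(Z\smi F))=0$ for every $F\in\vS_Z$ and every $x'$, which — $s(\cdot,N(F))$ being sublinear and $-s(\cdot,N(Z\smi F))$ superlinear — forces each $N(F)$, $F\in\vS_Z$, to be a singleton; were some $N(F)\ne\{0\}$, the restriction of $N$ to $\vS_F$ would be a non-trivial vector measure inside $H$, contradicting the pointlessness of $N|_H$. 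Hence $N(F)=\{0\}$ for all $F\in\vS_Z$, so $s(x',N)$ and $s(x',-N)$ vanish on $\vS_Z$; therefore $\theta$ may be replaced by $\theta\vee 0$ without changing any of the integrals, yielding a non-negative, measurable, bounded function — still denoted $\theta$ — with $s(x',M(E))=\dint_E\theta\,ds(x',N)$ for all $E\in\vS$ and $x'\in{X'}$.

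It then only remains to note that, since $\theta\ge 0$, this last identity is precisely the defining property (Definition~\ref{defiN}) of the $BDS_m$-integral in $cb(X)$ with $C_E:=M(E)\in cb(X)$: the right-hand side is finite and equals the support function $s(\cdot,M(E))$ of $M(E)$, so $\theta$ is $BDS_m$-integrable with respect to $N$ in $cb(X)$ and $\dint_E\theta\,dN=M(E)$ for every $E\in\vS$, which is the assertion. The only non-routine step is the middle one — extracting an everywhere non-negative bounded representative — where one must use pointlessness to guarantee that the exceptional $N$-null set $Z$ is genuinely negligible for every $s(x',\pm N)$, so that truncating $\theta$ below at $0$ destroys neither the identity nor the $BDS_m$-integrability; I expect this to be the main obstacle, the rest being an assembly of Theorems~\ref{t4} and~\ref{t1} with Proposition~\ref{l-pos}.
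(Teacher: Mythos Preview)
Your argument is correct and mirrors the paper's: Theorem~\ref{t4} yields $(RN_j)$, substitution of $y'=e_{x'}$ gives $s(x',M(E))=\int_E\theta\,ds(x',N)$, Proposition~\ref{l-pos} forces $\theta\ge 0$ $N$-a.e., and one then reads off $M(E)=\int_E\theta\,dN$. The only cosmetic differences are that the paper closes by invoking Theorem~\ref{p4} rather than appealing directly to Definition~\ref{defiN}, and that your careful justification that the exceptional $N$-null set $Z$ is genuinely negligible for every $s(x',\pm N)$ is more than the paper writes out --- this follows at once from the control measure that every $d_H$-multimeasure possesses.
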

	 \begin{proof}
	 By Theorem \ref{t4} the  $s$-$usac$ ($s$-$usd$, $s$-$uss$)	 condition on $M$ and $N$ is equivalent to property $(RN_j)$. Then, for every $x' \in B_{X'}$, it is
\[
 \langle e_{x'},j\circ{M(E)} \rangle = \int_E \theta \, d \langle e_{x'},j\circ{N} \rangle\]	
 which is equivalent to
 \[ s(x', M(E)) = \int_E \theta \,  d s(x',N).\]
 Then, by Proposition \ref{l-pos}, $\theta$ is non-negative $N$-a.e.. Now, by (RN$_j$) and Theorem \ref{p4},
 we have that $\theta$ is   ${BDS}_m$-integrable  with respect to $N$  and
 \[ M(E) = \int_E \theta dN.\]
\end{proof}

\begin{cor}\label{c2} Let $M,N:\vS\to{cb(X)}$ satisfy the hypotheses of Theorem \ref{ultimo}.
  If $j\circ{M}$ can be represented as a BDS-integral with respect to $j\circ{N}$ with a non-negative  bounded
 density $\theta$, then also $M$ can be represented as the  ${BDS}_m$-integral of $\theta$ with respect to $N$.
 \end{cor}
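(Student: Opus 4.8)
The plan is to read the conclusion off Theorem \ref{p4}, exploiting the fact that the given density is non-negative, so that the R\aa dstr\"om embedding of $-N$ never intervenes. First I would rewrite the hypothesis in scalar form: to say that $j\circ M$ is the $(BDS)$-integral of $\theta$ with respect to $j\circ N$ means precisely that for every $E\in\vS$ and every $y'\in\ell'_{\infty}(B_{X'})$ one has $\langle y',j\circ M(E)\rangle=\dint_E\theta\,d\langle y',j\circ N\rangle$.

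Now $\theta\geq 0$, hence $\theta^{+}=\theta$ and $\theta^{-}\equiv 0$, and therefore for every such $E$ and $y'$ the displayed identity is literally equation (\ref{e21}) of Theorem \ref{p4}: the summand $\dint_E\theta^{-}\,d\langle y',j\circ(-N)\rangle$ vanishes while $\dint_E\theta^{+}\,d\langle y',j\circ N\rangle=\dint_E\theta\,d\langle y',j\circ N\rangle$. Since $M$ and $N$ are $cb(X)$-valued $d_H$-multimeasures and $\theta$ is measurable, Theorem \ref{p4} applies and gives at once $M(E)=\dint_E\theta\,dN$ for every $E\in\vS$. In particular the ``$\Leftarrow$'' half of that theorem is what guarantees that $\theta$ is $BDS_m$-integrable with respect to $N$: substituting $e_{x'}$ for $y'$ yields $s(x',M(E))=\dint_E\theta\,ds(x',N)$, and since $M(E)\in cb(X)$ and the right-hand side is finite (as $\theta$ is bounded and $s(x',N)$ is finite), this is exactly the defining condition (\ref{e10}) with $M_\theta(E)=M(E)$; hence $\dint_E\theta\,dN=\dint_E\theta^{+}\,dN+\dint_E\theta^{-}\,d(-N)=M(E)\oplus\{0\}=M(E)$.

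There is no genuine obstacle here; the only point requiring care is the bookkeeping of which part of the density feeds which embedding, and the whole content of the corollary is that for a non-negative density the ``bad'' measure $j\circ(-N)$ drops out, so the vector-valued BDS representation of $j\circ M$ collapses onto the multivalued one for $M$. If one prefers a route that visibly uses the hypotheses of Theorem \ref{ultimo}, one can argue instead as follows: those hypotheses together with Theorem \ref{ultimo} produce a non-negative, bounded, $BDS_m$-integrable $\psi$ with $M(E)=\dint_E\psi\,dN$; Corollary \ref{p1} then gives $j\circ M(E)={\scriptstyle(BDS)}\dint_E\psi\,d(j\circ N)$; comparing with the given representation and using the essential uniqueness of the BDS-derivative of a vector measure (Lewis \cite{Lew}) forces $\psi=\theta$ off a $(j\circ N)$-null set, that is, off an $N$-null set, whence $M(E)=\dint_E\psi\,dN=\dint_E\theta\,dN$. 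The first route is shorter and is the one I would write up.
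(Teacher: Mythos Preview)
Your proposal is correct, and your first route is exactly what the paper intends: the corollary is stated without proof, and your argument via Theorem~\ref{p4} (equivalently, the first bullet of Corollary~\ref{p1}) mirrors the final step of the proof of Theorem~\ref{ultimo}, where non-negativity of $\theta$ collapses equation~(\ref{e21}) to the single summand $\int_E\theta\,d\langle y',j\circ N\rangle$ and the substitution $y'=e_{x'}$ recovers the defining relation for the $BDS_m$-integral. Your second route via uniqueness of the BDS-derivative is also valid but unnecessary here.
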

\section{Examples}\label{s-ex}
\begin{ex}\label{ex1}  \quad \rm
Let $\mu$ be a finite measure on $(\vO,\vS)$ and $N(E) := [0,\mu(E)]$
be an interval valued multimeasure (for results and applications of this kind of multimeasure see for example \cite{ccgis,anca2} and the references therein).
We can observe that if $f$ is a scalar, bounded measurable function, then $f$ is  $BDS_m$-integrable  with respect to $N$  and
\[ M_f (E) = \left[- \int_E f^- \, d\mu,  \int_E f^+ \, d\mu \right]. \]
\end{ex}
\begin{ex}\label{ex1a}
\rm
 Let let $N$ be as in Example \ref{ex1}
and let  now $M(E):=[0,\nu(E)]$  where $\nu$ is a finite measure on  $(\vO,\vS)$, equivalent with $\mu$.
	Let $\nu(E)=\dint_E\theta\,d\mu$ for all $E\in\vS$. Then there exists a partition $\vO=\bigcup_n\vO_n$ such that $0\leq\theta\leq{n}\;\mu$-a.e. on $\vO_n$.
	
Moreover  let $\xi: \R \to \R$ be defined by: $\xi(a) = a$ if $a > 0$, otherwise $\xi(a) =0$. So  for every $a \in \R\;s(a, N(E)) = \xi(a) \mu(E)$. If $\alpha,\beta\in\R$ and
 $x'=a,\, y'=b\in\R$ then
\begin{eqnarray*}
&& |\alpha s(a,N)+\beta s(b,N)|(E) = |\alpha \xi(a) + \beta  \xi(b)| \mu(E)\quad\mbox{and}\quad \\
&&|\alpha s(a,M)+\beta s(b,M)|(E)  = |\alpha \xi(a) + \beta  \xi(b)| \nu(E)\,.
\end{eqnarray*}
It follows that if $E\in\vS_{\vO_n}$, then
$$
|\alpha s(a,M)+\beta s(b,M)|(E)  = |\alpha s(a,N)+\beta s(b,N)|(E)\cdot \frac{\nu(E)}{\mu(E)}\leq n|\alpha s(a,N)+\beta s(b,N)|(E)\,.
$$
Therefore the multimeasure $M$ is locally $usd$ with respect to the multimeasure $N$ and can be represented as a $BDS_m$-integral with respect to $N$.
One can easily check that
$M(E)=\dint_E\theta\,dN$.
\end{ex}
\begin{ex}\label{ex2}
\rm
Assume that $X$ is a Banach space and consider $([0,1], \mathcal{L},\lambda)$ where $\lambda$ is  Lebesgue measure and $\mathcal{L}$ is
the family of all Lebesgue measurable subsets of $[0,1]$.
Let $f,g: [0,1] \to X$ be two     Pettis  integrable functions and
$\Gamma(t): = \mbox{co} \{0,f(t)\} , \, \Delta(t): =\mbox{co} \{0,g(t)\}$ be the multifunctions determined by $f$ and $g$ respectively.
They are $ck(X)$-valued and  Pettis integrable  (see \cite[Propositions 2.3 and 2.5]{cdpms2019}). We observe that, for every $x' \in X'$,
\begin{eqnarray}
s(x',\Gamma(t)) = \lg{x',f}\rg^+ (t), \qquad s(x',\Delta(t)) = \lg{x',g}\rg^+ (t)\,. \label{uno}
\end{eqnarray}
If $M,N: \mathcal{L} \to cwk(X)$ are the indefinite multivalued Pettis integrals of $f,g$, then, $\forall\;x'\in{X'},\;\forall\;E\in\mcL$,
\begin{eqnarray}\label{e22}
\quad s(x',M(E)) = \int_E  \lg{x',f}\rg^+ d\lambda, \quad\&\quad s(x',N(E)) = \int_E \lg{x',g}\rg^+  d\lambda.
\end{eqnarray}
Assume that there exists  a   measurable scalar function $\theta$ which is $BDS_m$-integrable  with respect to $N$ and
 \[
 M(E) = \int_E \theta \, dN, \qquad \forall\,  E \in \mathcal{L}.
 \]
 Since $M,N$ are positive,  $\theta$ is non-negative. It is a consequence of Definition \ref{defiN} that
 \[
 s(x',M(E))=\int_E\theta\,ds(x',N)\quad \mbox{for all }x'\in{X'}\;\mbox{and }E\in\mcL.
 \]
 Due to (\ref{e22}) we have
 \[\int_E\lg{x',f}\rg^+\,d\lambda=\int_E\theta\,ds(x',N)=\int_E\theta\lg{x',g}\rg^+\,d\lambda\quad \mbox{for all }x'\in{X'}\;\mbox{and }E\in\mcL.
 \]
It follows that for each $x'\in{X'}$ we have $\lg{x',f}\rg=\theta\lg{x',g}\rg$ $\mu$-a.e.
\end{ex}
\begin{ex}\label{ex3}  \quad \rm   Assume that $X$ is a Banach space and $\mu$ is a non-trivial atomless finite measure on $(\vO,\vS)$.
Let $f_,f_2:\vO\to{X}$ be scalarly integrable functions and $r_1,r_2:\vO\to(0,\infty)$ be $\mu$-integrable functions. Following \cite{mu4}  or
\cite[Example 2.13]{cdpms2019}, we define for $i=1,2$  multifunctions $\vG_i:\vO\to{cb(X)}$ by the formulae
 $\vG_i(\omega)=B(f_i(\omega),r_i(\omega))$, where $B(x,\delta)$ is the closed ball with its center in $x$ and of radius $\delta$.
Then $s(x',\vG_i(\omega))=\lg{x',f_i(\omega)}\rg +r_i(\omega)\|x'\|$  and so each $\vG_i$ is scalarly integrable.
If $f_1,f_2$ are Pettis integrable, then each $\vG_i$ is Pettis integrable in $cb(X)$. Moreover,
$$
(P)\int_E\vG_i\,d\mu=B\biggl((P)\int_Ef_i\,d\mu,\int_Er_i\,d\mu\biggr)\quad i=1,2.
$$
Let $M(E):=(P)\dint_E\vG_1\,d\mu$ and $N(E):=(P)\dint_E\vG_2\,d\mu$. One can easily check that $M$ and $N$ are $d_H$-measures.
Suppose that $M(E)= \dint_E\theta\,dN,\;E\in\vS$, i.e.
$$
\forall\;x'\in{X'},\;\forall\;E\in\vS,\, \qquad s(x',M(E))=\int_E\theta^+\,ds(x',N)+\int_E\theta^-\,ds(x',-N)\,.
$$
That yields the equality
\begin{eqnarray*}
\forall\;x'\in{X'},\,\, \forall\;E\in\vS, \qquad && \int_E\lg{x',f_1}\rg\,d\mu+\|x'\|\int_Er_1\,d\mu= \\ &&
\int_E\theta\lg{x',f_2}\rg\,d\mu+\|x'\|\int_Er_2|\theta|\,d\mu\,.
\end{eqnarray*}
But the sets $\left\{\dint_Ef_i\,d\mu: E\in\vS \right\},\;i=1,2$ are relatively weakly  compact and so there exists $0\neq{x'_0}\in{X'}$ vanishing on these sets.
 It follows that $r_1=r_2|\theta|\;\mu$-a.e. Appealing to \cite[Lemma]{mu4} we find that for each $x'\in{X'}$ one has $\lg{x',f_1}\rg=\theta\lg{x',f_2}\rg$ $\mu$-a.e.
  (the exceptional set depends on $x'$) i.e. $f_1$ is scalarly equivalent to $\theta\cdot f_2$. \\

One can easily check that also the reverse implication holds true: if there exists a measurable function $\theta$ such that  $f_1$ is scalarly equivalent to $\theta{f_2}$ and $r_1=r_2|\theta|$ $\mu$-a.e., then $M=\dint\theta\,dN$. \\

A similar calculation shows that $j\circ{M}$ can be represented as a $BDS$-integral with respect to $j\circ{N}$ if and only if the above $\theta$ is non-negative.\\
If we assume only that $r_1$ and $r_2$ are only positive and measurable, then we are in the local version of the RN-Theorem \ref{t4}.
\end{ex}
\begin{ex}\label{ex4}  \quad \rm   If we assume in Example \ref{ex3} that $X=Z'$ and $f_1,f_2$ are only Gelfand integrable then $\vG_1,\vG_2$ are weak$^*$
 multimeasures  (see \cite{mu4} for definitions). Applying now Remark \ref{r30}, we see that
$$
\forall\;z\in{Z}, \;\forall\;E\in\vS, \qquad s(z,M(E))=\int_E\theta^+\,ds(z,N)+\int_E\theta^-\,ds(z,-N)
$$
if and only there is $\theta$ such that $r_1=r_2 \cdot |\theta|\;\mu$-a.e. and $f_1$ is weak$^*$ scalarly equivalent to $\theta \cdot {f_2}$.\\
\end{ex}

{\small 
\noindent {\bf Conflict of interests:} The authors declare no conflict of interest.\\

{\bf \noindent Author Contributions:}
All  authors  have  contributed  equally  to  this  work  for  writing,  review  and  editing. All authors have read and agreed to the published version of the manuscript.
\\

{\bf \noindent Avaibility of matherials and data:} The authors confirm that the data supporting the results of this study are available within the article [and/or] its supplementary material in the bibliography.\\

{\bf \noindent Funding:}
This research has been accomplished within the UMI Group TAA “Approximation Theory and Applications”; it was  supported by
 Grant  “Analisi reale, teoria della misura ed approssimazione per la ricostruzione di im\-ma\-gini” (2020) of GNAMPA -- INDAM (Italy) and
by Ricerca di Base 2018 dell'Universit\`a degli Studi di Perugia - "Metodi di Teoria dell'Approssimazione, Analisi Reale, Analisi Nonlineare e loro Applicazioni";
 Ricerca di Base 2019 dell'Universit\`a degli Studi di Perugia - "Integrazione, Approssimazione, Analisi Nonlineare e loro Applicazioni"; "Metodi e processi innovativi per lo sviluppo di una banca di immagini mediche per fini diagnostici" funded by the Fondazione Cassa di Risparmio di Perugia (FCRP), 2018; "Metodiche di Imaging non invasivo mediante angiografia OCT sequenziale per lo studio delle Retinopatie degenerative dell'Anziano (M.I.R.A.)", funded by FCRP, 2019.\\

{\bf \noindent Acknowledgments\\}
This is a post-peer-review, pre-copyedit version of an article published in Journal of Convex Analysis. The final authenticated version is available
online at: https://www.heldermann.de/JCA/JCA29/JCA294/jca29062.htm
}


\Addresses

\end{document}